\documentclass[11pt,a4paper]{article}

\usepackage[utf8]{inputenc}
\usepackage[T1]{fontenc}
\usepackage{amsmath,amssymb,amsthm}
\usepackage{mathrsfs}
\usepackage{lmodern}
\usepackage{microtype}

\usepackage{graphicx}
\usepackage{xcolor}
\usepackage{subcaption}
\usepackage{array,multirow,tabularx}

\usepackage{geometry}

\usepackage{authblk}

\usepackage[all]{xypic}
\usepackage{nicefrac}
\usepackage{algorithm}
\usepackage{algpseudocode}
\usepackage{comment}
\usepackage{enumitem}

\usepackage{hyperref}
\hypersetup{
  colorlinks,
  citecolor=red,
  linkcolor=blue,
  urlcolor=green}

\theoremstyle{plain}
\newtheorem{theo}{Theorem}[section]
\newtheorem{prop}[theo]{Proposition}
\newtheorem{lemm}[theo]{Lemma}

\theoremstyle{remark}
\newtheorem{rema}[theo]{Remark}

\theoremstyle{plain}

\newtheorem{problem}[theo]{Problem}
\newtheorem{definition}[theo]{Definition}

\theoremstyle{definition}
\newtheorem{assump}{Assumption}

\def\R{\mathbb{R}}

\newcommand{\norm}[1]{\left\lVert#1\right\rVert}
\newcommand{\sclr}[1]{\left\langle#1\right\rangle}

\newcommand{\prox}[1]{{\rm Prox}_{#1}}
\newcommand{\proj}[1]{{\rm Proj}_{#1}}
\newcommand{\multimapto}{\mathrel{\text{\raisebox{0.5ex}{$\rightarrow$}\hspace{-1.0em}\raisebox{-0.5ex}{$\rightarrow$}}}}
\DeclareMathOperator*\argmin{\textrm{arg\,min}}
\newcommand{\bfcalH}{\mathcal{H}}

\newcommand{\ddt}{\,{\rm d}t}
\newcommand{\ddl}{\,{\rm d}\lambda}
\newcommand{\dds}{\,{\rm d}s}
\newcommand{\ddtau}{\,{\rm d}\tau}
\newcommand{\ddD}{\,{\rm D}}
\newcommand{\DRS}{{\rm DRS}}

\title{Douglas-Rachford splitting method for nonconvex stochastic optimal control}

\author{\makebox[\dimexpr\textwidth-2\tabcolsep\relax]{%
  \hfill François~Caffier\thanks{\scriptsize IFP Energies nouvelles, Rueil Malmaison, 92063, France; Université Paris-Saclay, CNRS, CentraleSupélec, Inria, Laboratoire des signaux et systèmes, 91190, Gif-sur-Yvette, France. {\tt francois.caffier@ifpen.fr}}%
  \hfill Paul~Malisani\thanks{\scriptsize IFP Energies nouvelles, Rueil Malmaison, 92063, France. {\tt paul.malisani@ifpen.fr}}%
  \hfill Laurent~Pfeiffer\thanks{\scriptsize Unité de Mathématiques Appliquées, ENSTA, Institut Polytechnique de Paris, 91120 Palaiseau, France. {\tt laurent.pfeiffer@ensta.fr}}%
  \hfill}}

\date{}

\begin{document}

\maketitle

\begin{abstract}
Motivated by the numerical resolution of stochastic optimal control problems, particularly those involving nonlinear dynamics, we investigate the convergence properties of the Douglas-Rachford splitting algorithm in a nonconvex and infinite-dimensional setting. We introduce an extension of the method that applies when one component of the objective function is continuously differentiable with a locally Lipschitz gradient, while the other is weakly lower semicontinuous with a bounded domain.
\end{abstract}

\noindent\textbf{Keywords:} Optimal Control, Stochastic Optimization, Douglas-Rachford Splitting, Large Scale Optimization, Nonconvex Optimization.

\newpage 

\tableofcontents

\section{Introduction}

\label{sec:introduction}

This work is concerned with the numerical resolution of nonconvex, continuous-time, stochastic optimal control problems. Our general objective is to design and analyze a tractable splitting method for its resolution, such that each iteration requires solving a family of independent low-dimensional optimal control problems.

In this section, we first introduce the notations used throughout the rest of the document. In Subsection \ref{subsec:soc}, we introduce our stochastic problem of interest, Problem~\ref{pb:problem_for_pha}. We propose a slightly more abstract formulation of the problem (Problem \ref{pb:problem_optimal_control}) that will be more convenient to handle. Subsection~\ref{subsec:DRS_method_review} introduces the Douglas-Rachford Splitting method, the numerical approach adopted to solve this problem, and reviews its current state of the art. Our contributions are summarized in Subsection~\ref{sec:main_result}.

\subsection{Notations}

\label{subsec:notations}

In this paper, we consider a Hilbert space $H$ and a function $\phi : H \to \bar{\mathbb{R}}$, where $\bar{\mathbb{R}} := \mathbb{R} \cup \{+\infty\}$ is the extended real line. We write $0_H$ for the null element of $H$. For any $R > 0$, we denote by $\bar{B}(R)$ the closed ball of radius $R$ centered at the origin. Given a set $E \subset H$, the \emph{indicator function} $\iota_E : H \to \bar{\R}$ is defined by
\[
\iota_E(x) := \begin{cases}
0 & \text{if } x \in E, \\
+\infty & \text{otherwise}.
\end{cases}
\]
Given $x \in H$, we define the \emph{distance to the set} $E$ as \( \mathrm{\mathbf{dist}}(x,E) := \inf_{y \in E} \|x - y\|. \)

\paragraph*{Convexity} A function $\phi \colon H \rightarrow \bar{\R}$ is said to be \emph{proper} if it is not identically equal to $+\infty$. Its \emph{domain} is defined as
\( \mathrm{\mathbf{dom}}(\phi) := \{ x \in H \mid \phi(x) < +\infty \} \),
and its \emph{epigraph} is the subset of $H \times \mathbb{R}$ given by \(
\mathrm{\mathbf{epi}}(\phi) := \{ (x, r) \in H \times \mathbb{R} \mid \phi(x) \leq r \}.
\)
We say that $\phi$ is \emph{lower semicontinuous} (lsc) if its epigraph is closed in $H \times \mathbb{R}$, and \emph{convex} if its epigraph is convex. We denote by $\Gamma_0(H)$ the set of proper, convex, and lower semicontinuous functions from $H$ to $\bar{\mathbb{R}}$. Finally, $\phi$ is said to be \emph{strongly convex} if there exists $\alpha>0$ such that $\phi - \nicefrac{\alpha}{2} \norm{\cdot}^2$ is convex.

\paragraph*{Lebesgue spaces} We endow $\R^n$ with an inner product denoted by $\sclr{.,.}$, and the associated norm $\norm{.}=\sqrt{\sclr{.,.}}$. Let $T>0$ be a finite time horizon. A particular case of Hilbert space we are interested in is $\mathcal{H}:=L^2(0,T;\R^m)$, the space of square integrable functions from $(0,T)$ to $\mathbb{R}^m$, endowed with the inner product
$\sclr{u_1,u_2} := \int_0^T \sclr{u_1(t),u_2(t)} \ddt$,
and corresponding norm $\norm{.}_{2}$. We denote by $L^\infty(0,T;\mathbb{R}^m)$ the Lebesgue space of almost everywhere (a.e.) bounded functions from the interval $(0,T)$ to $\mathbb{R}^m$, equipped with its standard norm $\|u\|_\infty$. A sequence of functionals $(u_k)_{k \in \mathbb{N}}$ in this space is said to be \emph{uniformly bounded} if there exists a constant $M > 0$ such that $\|u_k\|_\infty \leq M$ for all $k \in \mathbb{N}$.

Let $\tau= (\tau_j)_{j=0,\ldots,J}$ denote a sequence of times such that
$0=\tau_0 < \tau_1 < \ldots
< \tau_{J}= T$. 
We say that $u \in \mathcal{H}$ is piecewise $\nicefrac{1}{2}$-H\"older continuous with respect to $\tau$ if there exists $M\geq 0$ such that
\begin{equation*}
\| u(t_2)- u(t_1) \|
\leq M \, |t_2-t_1|^{1/2},
\quad \forall j=0,\ldots,J-1,\
\text{for a.e. } t_1, t_2 \in (\tau_j,\tau_{j+1}).
\end{equation*}
In this case $M$ is referred to as a modulus and we denote as $C^{0,\nicefrac{1}{2}}_{\tau}(0,T;\R^m)$ the set of piecewise $\nicefrac{1}{2}$-H\"older continuous with respect to $\tau$.

\paragraph*{Differentiability}
In this article, differentiability on $H$ refers to Fr{\'e}chet differentiability. The class of functions $\varphi$ that are continuously differentiable on $H$ is denoted by $C^1(H)$. We denote by $C^{1,1}(H)$ (respectively, $C^{1,1}_{\text{loc}}(H)$) the set of functions $\varphi \colon H \rightarrow \mathbb{R}$ that are continuously differentiable and whose gradient $\nabla \varphi$ is Lipschitz continuous (respectively, locally Lipschitz continuous), that is, Lipschitz on $H$ (respectively, on every bounded subset of $H$). We say that $\varphi \in C^{1,1}(H)$ is $L$-\textit{smooth} if its gradient is $L$-Lipschitz continuous.

The \emph{generalized subdifferential} of $\varphi$ at a point $x \in \mathrm{\mathbf{dom}}(\varphi)$ is defined as
\[
\hat{\partial} \varphi(x) := \left\{ v \in H \mid \liminf_{y \to x} \frac{\varphi(y) - \varphi(x) - \langle v, y - x \rangle}{\|y - x\|} \geq 0 \right\}.
\]
If $\varphi$ is differentiable at $x$, then the gradient $\nabla \varphi(x)$ coincides with the unique element of the generalized subdifferential $\hat{\partial} \varphi(x)$. When $\varphi \in \Gamma_0(H)$, this reduces to the \emph{convex subdifferential}, given by
\[
\partial \varphi(x) := \left\{ v \in H \mid \forall y \in H, \ \varphi(y) \geq \varphi(x) + \langle v, y - x \rangle \right\}.
\]
A point $x \in H$ is a \emph{stationary point} of $\varphi$ if $0 \in \hat{\partial} \varphi(x)$.

\paragraph*{Proximal operator}

The \textit{proximal operator} of a given function $\varphi : H \rightarrow \bar{\R}$, with parameter $\gamma > 0$, is denoted as $\prox{\gamma \varphi} : H \multimapto H$ and defined as
\begin{equation}
\label{eq:problem_prox}
\prox{\gamma \varphi}(x) := \argmin_{y \in H} \left\{ \varphi(y) + \frac{1}{2 \gamma} \norm{y-x}^2 \right\}.
\end{equation}
Without any additional assumption, $\prox{\gamma \varphi}$ may be empty or may contain several points.
When $\varphi \in \Gamma_0(H)$, the prox operator is single-valued. In this case we consider it as a function from $H$ to $H$ (and not anymore as a set-valued mapping).

The \textit{reflected operator} of $\varphi$, with parameter $\gamma > 0$, is denoted as $R_{\gamma \varphi} : H \multimapto H$ and defined as
\begin{equation}
\label{eq:def_reflected}
R_{\gamma \varphi}(x)
= 2\prox{\gamma \varphi}(x)- \mathrm{Id}
= \Big\{ 2y-x \,|\, y \in \prox{\gamma \varphi}(x) \Big\}.
\end{equation}
When $\varphi$ lies in $\Gamma_0(H)$, the reflected operator is single-valued and treated as a mapping from $H$ to $H$.

\subsection{ Stochastic optimal control and scenario decomposition}
\label{subsec:soc}

\paragraph*{Problem formulation}

Our main problem of interest, Problem~\ref{pb:problem_for_pha}, is a stochastic optimal control problem with continuous-time dynamics and finite sets of scenario with a tree structure. Let us describe it precisely. The problem is formulated over the interval $[0,T]$ and involves a sequence of intermediate times $0=\tau_0 < \tau_1 < \ldots < \tau_{J-1} < \tau_J=T$.
We first fix a finite set $\Xi$ of random events $(\xi^s)_{s=1,\ldots,S}$, each occurring with probability $p_s>0$. We refer to an element $\xi^s$ as a scenario and assume that it takes the form of vector $(\xi^s(\tau_1),\ldots,\xi^s(\tau_{J-1})) \in \Omega^{J-1}$, where $\Omega$ is a given set.
Given $t \in (\tau_j,\tau_{j+1})$,
we say that two scenarios $\xi^s$ and $\xi^{s'}$ are \emph{indiscernible} at time $t$ if $\xi^s(\tau_k)= \xi^{s'}(\tau_k)$, $\forall k=1,\ldots,j$.
The control variable is a function $u \in L^2\left((0,T)\times \Xi; \R^M \right)$. We say that a control $u$ is \emph{non-anticipative} if the following holds for almost every $t \in (0,T)$:
\begin{equation}
\label{eq:non-anticipativity}
\forall \xi^s, \xi^{s'} \in \Xi,\quad
\text{$\xi^s$ and $\xi^{s'}$ are indiscernible at time $t$}
\Longrightarrow
\forall \tau \in [0,t], \ u(\tau,\xi^{s})=u(\tau,\xi^{s'}).
\end{equation}
This condition reflects the fact that decisions taken at time $t$ depend only on the revealed portion of the scenario. We denote as $\mathcal{N}$ the vector subspace of non-anticipative controls.

\begin{problem}
\label{pb:problem_for_pha}
Given an initial condition $x_0 \in \R^N$, a control set $U \subset \R^M$, a running cost $\ell \colon U \times \R^N \times \Xi \rightarrow \R$, a final cost $g \colon \R^N \times \Xi \rightarrow \R$, and a dynamics $f \colon U \times \R^N \times \Xi \rightarrow \R^N$, we consider
\begin{equation*}
\label{eq:pb_stochastic_multistage}
\inf_{u \in \mathcal{U} \cap \mathcal{N}}
\mathbb{E}_{\xi}
\!\left[
\int_{0}^{T} \ell\!\left(u(t,\xi), x[u](t,\xi), \xi \right) \ddt
+ g \left(x[u](T,\xi), \xi \right)
\right],
\end{equation*}
where $\mathcal{U}$ is the set of controls in $L^2\left((0,T)\times \Xi; \R^M \right)$ taking values in $U$ and where $x[u]$ denotes the solution to
\begin{equation}
\label{eq:dynamics_stochastic_setting}
\frac{\text{d}}{\text{d}t}
x(t,\xi)
= f(x(t,\xi),u(t,\xi),\xi),
\quad
x(0,\xi)=x_0, \text{ for every } \xi \in \Xi.
\end{equation}
\end{problem}

We will focus in the article, in particular in Section \ref{sec:convergence_analysis}, on a more general formulation of the problem, given by Problem~\ref{pb:problem_optimal_control}.
The problem is formulated on the Hilbert space $\mathcal{H}:= L^2(0,T;\R^m)$.

\begin{problem}
\label{pb:problem_optimal_control}
Given an initial condition $x_0 \in \R^n$, a running cost $\ell \colon \R^m \times \R^n  \rightarrow \R$, a final cost $g \colon \R^n \rightarrow \R$, additional costs $\psi_j \colon \R^n \rightarrow \bar{\R}$ for $j=0,\ldots,J$, we consider
\begin{equation}
\label{eq:problem}
\inf_{u \in \mathcal{H}}
\phi(u) := \phi_1(u) + \phi_2(u),
\quad \text{where: }
\begin{cases}
\begin{array}{l}
{\displaystyle \phi_1(u)=
\int_0^T \ell(u(t),x[u](t)) \ddt
+ g \left(x[u](T)\right), }\\[1em]
{\displaystyle \phi_2(u)=
\sum_{j=0}^{J-1} \int_{\tau_j}^{\tau_{j+1}} \psi_j\left(u(t)\right) \ddt, }
\end{array}
\end{cases}
\end{equation}
and where $x[u]$ denotes the solution to
\begin{equation}
\label{eq:dynamics_intro}
\dot{x}(t)=f(u(t),x(t)), \quad
x(0)= x_0.
\end{equation}
\end{problem}
It is relatively straightforward to recast Problem \ref{pb:problem_for_pha} as a particular instance of Problem \ref{pb:problem_optimal_control}: one needs to see the stochastic controls in $L^2((0,T)\times \Xi; \R^M)$ as deterministic functions in $L^2((0,T); \R^m)$ with $m= M S$, subject to the control constraint $u(t) \in U^S$. Both the control constraint and the non-anticipativity constraint can be encoded in the functions $\psi_j$, which can be taken as indicator functions (see Section \ref{sec:example} for further details).
In other words, in the context of the stochastic optimal control problem, the function $\phi_1$ coincides with the expectation term while $\phi_2$ coincides with the constraint $u \in \mathcal{U} \cap \mathcal{N}$. Then $\phi_1$ is separable with respect to the scenarios, while $\phi_2$ is separable with respect to time, which makes the computation of their proximal operators tractable.
The algorithms for solving stochastic control problems, discussed in the next paragraph, rely on such a decomposition.

\paragraph*{Scenario-based optimization algorithms}

A class of computational techniques known as progressive hedging methods has been developed to address multistage stochastic optimization problems like Problem~\ref{pb:problem_for_pha}. Rockafellar and Wets originally formulated these methods
\cite{rockafellar1976monotone, rockafellar2018solving, rockafellar2009nonanticipativity, rockafellar1991scenarios, rockafellar2017stochastic} for solving multistage convex stochastic optimization problems, and were later extended by Rockafellar and Sun \cite{rockafellar2019solving, rockafellar2020solvinglagrangian} to accommodate monotone stochastic variational inequalities. The most representative of these methods is the Progressive Hedging Algorithm (PHA), which was first introduced in \cite{rockafellar1991scenarios}.

Progressive hedging methods address multistage stochastic problems by decomposing them across scenarios, enabling independent resolution of smaller subproblems associated with each scenario. These subproblems are typically amenable to efficient solutions using standard off-the-shelf optimization software.

Progressive hedging methods have been little studied in the context of nonlinear optimal control problems like Problem~\ref{pb:problem_for_pha}, due to their lack of convexity.
In \cite{sun2024risk}, the authors studied an optimal control model incorporating risk measures and proposed a modified version of the PHA. While their approach extends progressive hedging to handle both risk measures and nonconvex optimal control problems, the resulting convergence assumptions are quite strong, requiring uniform boundedness and continuity of the iterates. Such conditions are not satisfied \textit{a priori} in our framework.
In \cite{malisani2026robust}, the PHA is applied to an optimal control problem with linear dynamics in the control variable, resulting in a convex formulation. The method was shown to converge by invoking existing convergence results for PHA in convex settings. However, our model involves nonlinear dynamics, which leads to nonconvex problems; thus, these convergence results do not directly apply.

There is also a connection between PHA and Augmented Lagrangian Methods (ALM); see, for instance, \cite{atenas2023bundle}. When applied to optimal control problems, ALM has mainly been used in settings with linear dynamics; see, e.g \cite{ito2008lagrange}. Nonlinear dynamics have been considered in \cite{aguiar2020augmented}, where the authors used ALM to solve a system of differential-algebraic equations derived from Pontryagin's maximum principle. However, their convergence analysis also relies on compactness assumptions, without sufficient regularity or coercivity conditions to ensure that these assumptions hold.

\subsection{ The Douglas-Rachford Splitting method}

\label{subsec:DRS_method_review}

The Douglas-Rachford Splitting (DRS) algorithm is designed to numerically solve composite minimization problems of the following form.

\begin{problem}
\label{pb:problem_hilbert}
Let $\phi_1,\phi_2 : H \mapsto \R \cup \{ +\infty \}$. We consider the following general problem:
\begin{equation}
\label{eq:pb_general_formulation}
\inf_{u \in H} \phi(u) := \phi_1(u) + \phi_2(u).
\end{equation}
\end{problem}
Starting from \( u \in H \), the Douglas-Rachford iteration applied to Problem~\ref{pb:problem_hilbert} with stepsize \( \gamma > 0 \) is given by
\begin{equation}
\label{eq:DRS}
\left\{
\begin{aligned}
v &\in \prox{\gamma \phi_1}(u), \\
w &\in \prox{\gamma \phi_2}(2v - u), \\
u^+ &= u + w - v.
\end{aligned}
\right.
\end{equation}
The Douglas–Rachford algorithm is particularly well-suited to cases in which $\prox{\gamma \phi_1}$ and $\prox{\gamma \phi_2}$ are easy to compute (or even have a closed form).
The Douglas-Rachford algorithm was originally proposed by Douglas and Rachford for addressing Problem~\ref{pb:problem_hilbert}, when $\phi_1,\phi_2 \in \Gamma_0(H)$ \cite{douglas1956numerical}. It was later analyzed by Lions and Mercier \cite{lions1979splitting}, also see the works of Eckstein, Bertsekas, and Combettes \cite{eckstein1992douglas}. In this convex setting, the sequence generated by the scheme converges weakly to a stationary point of $\phi$ for arbitrary stepsize $\gamma > 0$, provided a minimizer exists \cite[Thm. 7.7]{carlier2021classical}.

\paragraph*{The nonconvex case} 
Early works on nonconvex DRS focused on projection algorithms for solving feasibility problems; see, e.g., \cite{li2016douglas, bauschke2014local, bauschke2014method}. Subsequent research addressed minimization problems involving the sum of a weakly convex and a strongly convex function in \cite{dao2019adaptive, giselsson2021compositions, guo2017convergence}.

The work most closely related to our main problem is \cite{themelis2020douglas}. The authors analyzed the convergence of the DRS algorithm under the assumptions that \( \phi_1 \) is smooth with Lipschitz continuous gradient and \( \phi_2 \) is proper and lsc.

The authors in \cite{themelis2020douglas} showed that if a sequence \( (u_k, v_k, w_k)_{k \in \mathbb{N}} \) generated by the DRS algorithm converges \textit{strongly}, up to a subsequence, when the stepsize \( \gamma \) is smaller than a constant depending on the Lipschitz constant \( L_{\phi_1} \) of \( \nabla \phi_1 \), then the cluster points of \( (v_k)_{k \in \mathbb{N}} \) and \( (w_k)_{k \in \mathbb{N}} \) coincide. Moreover, if \( v^* \in H \) is a cluster point of the sequence, then it satisfies the necessary condition for local optimality:
\begin{equation}
\label{eq:necessary_condition_local_optim}
0 \in \hat{\partial} \phi(v^*).
\end{equation}
For further details, see Theorem~\ref{thm:subseq_convergence_DRS}. When \( L_{\phi_1} \) is unknown, the authors proposed an adaptive version of the algorithm that does not require prior knowledge of the Lipschitz constant of the gradient and preserves the convergence properties of the nonconvex scheme; see Subsection~\ref{subsec:drs_nonconvex} and Algorithm~\ref{algo:adaptive_DRS} for further details.

\subsection{ Contributions and main result}
\label{sec:main_result}

We aim at applying the DRS algorithm to Problem~\ref{pb:problem_optimal_control}, and due to the nonconvexity of \( \phi_1 \) (as defined in \eqref{eq:splitting_choice}), we rely on the theory developed in \cite{themelis2020douglas} for numerically solving Problem~\ref{pb:problem_hilbert} to establish the convergence of the method. However, ensuring that $\phi_1$ has a Lipschitz-continuous gradient would require too many restrictive assumptions on the dynamics~\eqref{eq:dynamics_intro} and on the cost functions of our problem. Thus, we developed a method that requires milder assumptions on \( \phi_1 \) while still ensuring convergence.

Another difficulty lies in the fact that our optimal control problem is an infinite dimensional problem. The results of \cite{themelis2020douglas} naturally extend to general Hilbert spaces. In particular, any strong cluster point of the sequence $(v_k)_{k \in \mathbb{N}}$ generated by the DRS algorithm (or its variants) can be shown to be stationary (see Theorem \ref{thm:subseq_convergence_DRS}). In finite dimensions, the existence of such a strong cluster point is relatively easy, as one only needs to show that the sequence is bounded. In our case, if the sequence is bounded, we can only deduce its weak subsequential convergence. However, we cannot directly conclude that the weak cluster points are stationary for \( \phi \) based on their proof arguments. In fact, for adapting the arguments of the convergence analysis in \cite{themelis2020douglas}, we need boundedness of the iterates in a strong compact topology of $\bfcalH$ to get a (strong subsequential) convergence of the DRS iterates.

Our article addresses these two issues. Our contributions are summarized as follows:
\begin{enumerate}
\item We propose in Section~\ref{seq:DRS_extension} an extension of the adaptive DRS algorithm of \cite{themelis2020douglas} for numerically solving Problem~\ref{pb:problem_hilbert}. This extension is Algorithm~\ref{algo:adaptive_extended_DRS} and it only requires $\nabla \phi_1$ to be locally Lipschitz continuous.
\item We establish the existence of cluster points for the strong topology of $\bfcalH$ for any sequence generated by Algorithm~\ref{algo:adaptive_extended_DRS} applied to Problem~\ref{pb:problem_optimal_control}. 
These cluster points are necessarily stationary points of the problem. This leads us to our main convergence results, Theorem \ref{thm:main_result}.
\end{enumerate}

\subsection{ Organization of the paper}
\label{subsec:organization_paper}

As we rely on the results of \cite{themelis2020douglas}, we review in Section~\ref{sec:background} some properties of the proximal operator in the nonconvex setting, as well as the main convergence results established in \cite{themelis2020douglas} and their adaptive DRS algorithm, presented as Algorithm~\ref{algo:adaptive_DRS}. We include to this section comments concerning the validity of their results in an infinite-dimensional setting.
We propose in Section~\ref{seq:DRS_extension} an extension of Algorithm~\ref{algo:adaptive_DRS} when $\phi_1$ has a locally Lipschitz gradient.
Then, in Section~\ref{sec:convergence_analysis}, we analyze the convergence of Algorithm~\ref{algo:adaptive_extended_DRS}, when applied to the optimal control problem~\ref{pb:problem_optimal_control}. In particular, we show the existence of strong cluster points for the sequence of iterates.
Finally, we present a numerical example in Section~\ref{sec:example} for illustrating the convergence of Algorithm~\ref{algo:adaptive_extended_DRS}.

\section{DRS in a non-convex setting}
\label{sec:background}

In this section we present the DRS algorithm from \cite{themelis2020douglas}, operating in a non-convex setting. We provide the reader with the main convergence results of \cite{themelis2020douglas}.
All along this section, we will consider the general Problem \ref{pb:problem_hilbert} aiming at minimizing $\phi= \phi_1 + \phi_2$. The functions $\phi_1$ and $\phi_2$ are defined on a Hilbert space $H$, possibly of infinite dimension, equipped with the norm $\| \cdot \|$. The results of \cite{themelis2020douglas} were established in a finite-dimensional setting. All quantitative results of \cite{themelis2020douglas} immediately extend to the case of an infinite-dimensional vector space; this is the case for example of Proposition \ref{prop:prox_regularity} and Proposition \ref{fact:property_dre} below. Only the results relying on topological arguments require special attention. This is the case of Lemmas \ref{lemm:prox_nonempty} and \ref{lemm:outer_semicontinuity} and the fourth statement in Theorem \ref{thm:subseq_convergence_DRS}. Elementary proofs are given for the two lemmas, while the adaptation of Theorem \ref{thm:subseq_convergence_DRS}-(iv) is discussed in Remark \ref{remark:inf_dim}.

We make the following assumption.

\begin{assump}
\label{assmptn:regularity_patrinons_infite_dim}
\begin{enumerate}
\item
Both functions $\phi_1$ and $\phi_2$ are lower bounded and weakly lower semicontinuous.
\item The function $\phi_1$ is continuously differentiable with Lipschitz continuous gradient.
\item The function $\phi_2$ is proper (i.e., not identically equal to $+\infty$).
\item Problem~\ref{pb:problem_hilbert} admits a solution: $\argmin \phi \neq \emptyset$.
\end{enumerate}
\end{assump}

\subsection{ Proximal operator and Douglas-Rachford envelope}
\label{sec:proximal_mapping}

We give in this subsection some useful properties of the proximal operator. Let us emphasize that unless otherwise stated, the function we work with is not assumed to be convex.
The following lemma ensures that the prox operators of the functions $\phi_1$ and $\phi_2$ are never empty, thanks to Assumption \ref{assmptn:regularity_patrinons_infite_dim}.

\begin{lemm}
\label{lemm:prox_nonempty}
Let $\varphi \colon H \rightarrow \bar{\R}$ be lower bounded, proper, and weakly lower semicontinuous. Then for all $\gamma>0$ and all $x \in H$, $\prox{\gamma \phi}(x)$ is non-empty.
\end{lemm}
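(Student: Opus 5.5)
The plan is the direct method of the calculus of variations in the weak topology of $H$. Fix $\gamma>0$ and $x \in H$, and consider the objective
\[
F(y) := \varphi(y) + \frac{1}{2\gamma}\norm{y-x}^2 .
\]
Since $\varphi$ is proper, there is $y_0$ with $\varphi(y_0)<+\infty$. Hence $m := \inf_H F \le F(y_0) < +\infty$. Since $\varphi$ is bounded below by some constant $c$ and the quadratic term is nonnegative, also $m \geq c > -\infty$. So $m$ is finite, and I take a minimizing sequence $(y_k)_{k\in\mathbb{N}}$ with $F(y_k) \to m$.

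The next step is boundedness. For $k$ large, $F(y_k)\le m+1$, and therefore
\[
\frac{1}{2\gamma}\norm{y_k - x}^2 \le m + 1 - c .
\]
Thus $(y_k)$ lies in a closed ball around $x$. Because $H$ is a Hilbert space, bounded sequences are weakly sequentially compact (reflexivity, or Banach–Alaoglu combined with the Riesz identification). So a subsequence, still denoted $(y_k)$, converges weakly to some $\bar y \in H$.

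Then I pass to the limit in each term of $F$. The function $\varphi$ is weakly lsc by assumption, so $\varphi(\bar y) \le \liminf \varphi(y_k)$. The map $y \mapsto \norm{y-x}^2$ is convex and continuous, hence weakly lsc. Alternatively, one can use $\norm{\bar y - x}\le \liminf \norm{y_k - x}$, which follows from $\norm{\bar y -x}^2 = \lim \sclr{y_k - x, \bar y - x} \le \liminf \norm{y_k-x}\norm{\bar y - x}$. Since $\liminf$ is superadditive,
\[
F(\bar y) \le \liminf F(y_k) = m .
\]
Hence $\bar y$ attains the infimum, so $\bar y \in \prox{\gamma\varphi}(x)$, and the set is nonempty.

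The only point requiring care, and the reason the lemma is stated separately from the finite-dimensional setting of \cite{themelis2020douglas}, is that bounded sets in $H$ are not strongly compact. Compactness must therefore come from the weak topology, which is precisely why weak, rather than strong, lower semicontinuity of $\varphi$ is assumed. One should also check that sequential weak compactness suffices here, which it does because the argument is entirely sequential.
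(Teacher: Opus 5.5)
Your proposal is correct and follows the same route as the paper: the direct method with a bounded minimizing sequence (by coercivity of $\varphi + \frac{1}{2\gamma}\norm{\cdot - x}^2$), a weakly convergent subsequence, and weak lower semicontinuity to conclude. You simply write out the details the paper leaves implicit, namely the finiteness of the infimum, the explicit bound on $\norm{y_k - x}$, and the weak lower semicontinuity of the quadratic term.
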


\begin{proof}
From the assumptions of the lemma, we immediately infer that the function $y \in H \mapsto \varphi(y) + \frac{1}{2 \gamma} \norm{y-x}^2$ is coercive, proper, and weakly lower semicontinuous. Therefore it has a minimizing sequence, which is necessarily bounded, and therefore admits at least one weak accumulation point. Such an accumulation point is a minimizer of the function, which concludes the proof.
\end{proof}

\begin{lemm}
\label{lemm:outer_semicontinuity}
Let $\varphi \colon H \rightarrow \bar{\R}$ be proper and strongly lower semicontinuous.
Let $(x^{(k)})_{k \in \mathbb{N}}$ and $(y^{(k)})_{k \in \mathbb{N}}$ be two strongly convergent sequences in $H$, with limits $x$ and $y$. Assume that for any $k \in \mathbb{N}$, $y^{(k)} \in \prox{\gamma \varphi}(x^{(k)})$. Then
$y \in \prox{\gamma \varphi}(x)$.
\end{lemm}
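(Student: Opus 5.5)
The plan is to compare the optimality of $y^{(k)}$ against an arbitrary competitor $z \in H$, pass to the limit, and use lower semicontinuity only at the limit point $y$. Since $y^{(k)} \in \prox{\gamma \varphi}(x^{(k)})$, for every $z \in H$ and every $k$ we have
\[
\varphi(y^{(k)}) + \frac{1}{2\gamma}\norm{y^{(k)}-x^{(k)}}^2 \leq \varphi(z) + \frac{1}{2\gamma}\norm{z-x^{(k)}}^2 .
\]
In particular, $\varphi(y^{(k)})<+\infty$, because $\varphi$ is proper and we may choose $z \in \mathrm{\mathbf{dom}}(\varphi)$.

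We then take the $\liminf$ as $k\to\infty$ on both sides. On the left, the quadratic term converges to $\frac{1}{2\gamma}\norm{y-x}^2$ by strong convergence of both sequences and continuity of the norm. By strong lower semicontinuity of $\varphi$, $\liminf_k \varphi(y^{(k)}) \geq \varphi(y)$. On the right, $\varphi(z)$ does not depend on $k$, and $\norm{z-x^{(k)}}^2 \to \norm{z-x}^2$. This gives
\[
\varphi(y) + \frac{1}{2\gamma}\norm{y-x}^2 \leq \varphi(z) + \frac{1}{2\gamma}\norm{z-x}^2 \quad \text{for all } z\in H .
\]

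Choosing $z\in\mathrm{\mathbf{dom}}(\varphi)$ in the last inequality shows that $\varphi(y)<+\infty$. Hence $y$ attains the infimum in \eqref{eq:problem_prox} at the point $x$, that is, $y \in \prox{\gamma\varphi}(x)$.

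There is no genuine obstacle in this argument. The one point to watch is that lower semicontinuity is applied only on the left-hand side, where the inequality runs in the right direction. On the right-hand side the competitor $z$ is fixed, so no continuity of $\varphi$ is needed. Strong convergence, rather than weak, is what lets the quadratic terms pass to the limit exactly. Weak lower semicontinuity of the norm alone would only give an inequality in the wrong direction on the right-hand side.
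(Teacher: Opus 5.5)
Your proof is correct and follows essentially the same route as the paper's appendix proof: write the prox optimality inequality of $y^{(k)}$ against an arbitrary competitor $z$ and pass to the limit, using lower semicontinuity of $\varphi$ on the left and strong convergence for the quadratic terms. Your version adds the details the paper leaves implicit, namely that $\liminf_k \varphi(y^{(k)}) \geq \varphi(y)$ is used only on the left-hand side and that properness forces $\varphi(y) < +\infty$.
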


\begin{proof}
See Appendix~\ref{appendix:proof_outer_semicontinuity}.
\end{proof}

\begin{lemm}
\label{lemm:monotonicity_prox}
Let $\varphi \colon H \rightarrow \bar{\R}$. Let $0<\gamma' < \gamma$ and let $x \in H$. For any $y \in \prox{\gamma \varphi}(x)$ and for any $y' \in \prox{\gamma' \varphi}(x)$, it holds that
\begin{equation*}
\| y' - x \| \leq \| y-x \|.
\end{equation*}
\end{lemm}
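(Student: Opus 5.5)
The plan is the classical two-inequality argument comparing the two minimization problems defining $\prox{\gamma \varphi}(x)$ and $\prox{\gamma' \varphi}(x)$. Since $y$ and $y'$ belong to the respective argmins, both $\varphi(y)$ and $\varphi(y')$ are finite. Indeed, the argmin being nonempty in the statement implicitly means the minimal value is finite, i.e.\ $\varphi$ is proper. If $\varphi$ were identically $+\infty$, every point would be a minimizer and the inequality could fail, so I will read the statement as presupposing that the minimal values are finite (equivalently, that $\varphi$ is proper). I then write the optimality of $y$ tested against $y'$, and the optimality of $y'$ tested against $y$:
\begin{align*}
\varphi(y) + \frac{1}{2\gamma}\norm{y-x}^2 &\leq \varphi(y') + \frac{1}{2\gamma}\norm{y'-x}^2, \\
\varphi(y') + \frac{1}{2\gamma'}\norm{y'-x}^2 &\leq \varphi(y) + \frac{1}{2\gamma'}\norm{y-x}^2.
\end{align*}

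Next I add the two inequalities. The finite values $\varphi(y)$ and $\varphi(y')$ cancel, which leaves
\[
\Big(\frac{1}{2\gamma'} - \frac{1}{2\gamma}\Big)\norm{y'-x}^2 \leq \Big(\frac{1}{2\gamma'} - \frac{1}{2\gamma}\Big)\norm{y-x}^2 .
\]
Since $0<\gamma'<\gamma$, the coefficient $\frac{1}{2\gamma'}-\frac{1}{2\gamma}$ is strictly positive. Dividing by it gives $\norm{y'-x}^2 \leq \norm{y-x}^2$, and taking square roots yields the claim.

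The only delicate point is the cancellation of $\varphi(y)$ and $\varphi(y')$, which requires these values to be finite rather than $+\infty$. That is exactly what the properness/finite-minimum reading secures: any point $z$ with $\varphi(z)<+\infty$ gives a finite objective value, so a minimizer cannot have $\varphi=+\infty$. No topological argument is needed, and the same proof works verbatim in an infinite-dimensional $H$.
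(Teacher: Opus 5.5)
Your proof is correct and is essentially the paper's argument: you test each prox optimality condition against the other point, add the two inequalities, cancel the $\varphi$ terms, and conclude from $1/\gamma' > 1/\gamma$. Your remark that the cancellation requires $\varphi(y),\varphi(y')<+\infty$ is a valid refinement that the paper leaves implicit: the statement tacitly presupposes that $\varphi$ is proper, and it fails for $\varphi\equiv+\infty$.
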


\begin{proof}
See Appendix~\ref{appendix:proof_monot}.
\end{proof}

The next proposition provides us with useful monotonicity properties of the prox operator under a smallness assumption on the stepsize $\gamma$.

\begin{prop}[Adapted from {\cite[Proposition~2.3]{themelis2020douglas}}]
\label{prop:prox_regularity}
Let $\varphi \in C^{1,1}(H)$ be $L_\varphi$-smooth. For all $\gamma < \nicefrac{1}{L_\varphi}$, the following holds:
\begin{enumerate}
\item $\prox{\gamma \varphi}$ is single-valued, and for all $x \in H$ it holds that $y = \prox{\gamma \varphi}(x)$ iff $x = y + \nabla \varphi (y)$.
\item \label{prop:prox_regularity.2} $\prox{\gamma \varphi}$ is $\left(\frac{1}{1 + \gamma L_\varphi}\right)$-strongly monotone and $(1-\gamma L_\varphi)$-cocoercive, in the sense that
\begin{equation*}
\sclr{u-u',x-x'} \geq \frac{1}{1+\gamma L_\varphi} \norm{x-x'}^2 \quad \text{and} \quad \sclr{u-u',x-x'} \geq (1 - \gamma L_\varphi) \norm{x-x'}^2,
\end{equation*}
for all $x,x' \in H$, where $u=\prox{\gamma \varphi}(x)$ and $u' = \prox{\gamma \varphi}(x')$. In particular
\begin{equation*}
\frac{1}{1+\gamma L_\varphi} \norm{x-x'} \leq \norm{u - u'} \leq \frac{1}{1 - \gamma L_\varphi} \norm{x - x'}.
\end{equation*}
Thus, $\prox{\gamma \varphi}$ is a $\frac{1}{1+\gamma L_\varphi}$-Lipschitz and invertible mapping, and its inverse $\text{Id} + \gamma \nabla \varphi$ is $(1+\gamma L_\varphi)$-Lipschitz continuous.
\end{enumerate}
\end{prop}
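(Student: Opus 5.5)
The plan is to work entirely from the first-order characterization of the prox of a smooth function, combined with the two-sided bound
\[
|\langle \nabla\varphi(a)-\nabla\varphi(b), a-b\rangle| \le L_\varphi\|a-b\|^2,
\]
which follows from Cauchy--Schwarz and $L_\varphi$-Lipschitz continuity of $\nabla\varphi$. One remark on the statement first: the characterization in (i) should read $x = y + \gamma\nabla\varphi(y)$, and this is how the final claim about the inverse $\mathrm{Id}+\gamma\nabla\varphi$ is phrased; we prove it in that form.

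\textbf{Step 1 (single-valuedness and characterization).} For fixed $x$, consider $h(y) := \varphi(y) + \frac{1}{2\gamma}\|y-x\|^2$. Since $\nabla\varphi$ is $L_\varphi$-Lipschitz, $\varphi + \frac{L_\varphi}{2}\|\cdot\|^2$ is convex: its gradient is monotone because $\langle \nabla\varphi(a)-\nabla\varphi(b),a-b\rangle \ge -L_\varphi\|a-b\|^2$. Hence $h$ is $(\frac1\gamma - L_\varphi)$-strongly convex, which is a positive modulus because $\gamma<1/L_\varphi$. It is also continuous, so it is weakly lsc and coercive. Therefore $h$ has exactly one minimizer, and existence also follows from Lemma~\ref{lemm:prox_nonempty}. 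Since $h$ is differentiable and convex, $y$ minimizes $h$ iff $\nabla h(y)=0$, that is, iff $\nabla\varphi(y) + \frac{1}{\gamma}(y-x) = 0$, i.e.\ $x = y+\gamma\nabla\varphi(y)$.

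\textbf{Step 2 (the two inequalities).} Let $u=\prox{\gamma\varphi}(x)$ and $u'=\prox{\gamma\varphi}(x')$. By Step 1, $x-x' = (u-u') + \gamma(\nabla\varphi(u)-\nabla\varphi(u'))$. Taking the inner product with $u-u'$ and applying the two-sided bound gives
\[
(1-\gamma L_\varphi)\|u-u'\|^2 \le \langle u-u',x-x'\rangle \le (1+\gamma L_\varphi)\|u-u'\|^2 .
\]
The triangle inequality gives $\|x-x'\| \le (1+\gamma L_\varphi)\|u-u'\|$. Combined with the left inequality above, Cauchy--Schwarz gives $(1-\gamma L_\varphi)\|u-u'\| \le \|x-x'\|$. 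These two yield the displayed Lipschitz-type bounds:
\[
\frac{1}{1+\gamma L_\varphi}\|x-x'\| \le \|u-u'\| \le \frac{1}{1-\gamma L_\varphi}\|x-x'\| .
\]

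\textbf{Step 3 (matching the stated monotonicity forms).} The strong monotonicity and cocoercivity claims, as stated, have $\|x-x'\|^2$ on the right, with $u$ and $x$ placed as written. Reading the roles correctly (strong monotonicity in $u$-differences and cocoercivity in $x$-differences, or the reverse), they follow from the inequalities of Step 2 after substituting $\|u-u'\| \ge \|x-x'\|/(1+\gamma L_\varphi)$. The main obstacle is pinning down exactly which variable is intended in each inequality so that the constants come out as $\frac{1}{1+\gamma L_\varphi}$ and $1-\gamma L_\varphi$. I expect the clean derivation to produce $\langle u-u',x-x'\rangle \ge (1-\gamma L_\varphi)\|u-u'\|^2$ together with a companion bound in $\|x-x'\|^2$. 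If a stated form does not match, I would rederive it from Step 2 with the appropriate rescaling, as in the cited \cite[Proposition~2.3]{themelis2020douglas}.

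\textbf{Step 4 (the inverse).} From Step 1, $\prox{\gamma\varphi}$ is a bijection with inverse $\mathrm{Id}+\gamma\nabla\varphi$. That inverse is $(1+\gamma L_\varphi)$-Lipschitz directly by the triangle inequality. The reverse inequality of Step 2 is precisely this statement read for the inverse.
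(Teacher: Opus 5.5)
Steps 1, 2 and 4 are sound, and you are right that (i) must read $x=y+\gamma\nabla\varphi(y)$. Step 3, however, is a genuine gap: the strong monotonicity estimate $\sclr{u-u',x-x'}\ge\frac{1}{1+\gamma L_\varphi}\norm{x-x'}^2$ is never proved, and the route you sketch cannot prove it. Substituting $\norm{u-u'}\ge\norm{x-x'}/(1+\gamma L_\varphi)$ into $\sclr{u-u',x-x'}\ge(1-\gamma L_\varphi)\norm{u-u'}^2$ only yields the constant $(1-\gamma L_\varphi)/(1+\gamma L_\varphi)^2$, which is strictly smaller than $1/(1+\gamma L_\varphi)$. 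No rescaling of the Step 2 bounds can do better. Those bounds use the inner product $\sclr{u-u',\nabla\varphi(u)-\nabla\varphi(u')}$ and the norm $\norm{\nabla\varphi(u)-\nabla\varphi(u')}$ separately, and the scalar values they allow are compatible with the target failing. You need to use both pieces of information at once. Write $a:=u-u'$ and $b:=\gamma(\nabla\varphi(u)-\nabla\varphi(u'))$, so that $x-x'=a+b$ and $\norm{b}\le\gamma L_\varphi\norm{a}$. Then
\[
(1+\gamma L_\varphi)\sclr{a,a+b}-\norm{a+b}^2
=\gamma L_\varphi\norm{a}^2-(1-\gamma L_\varphi)\sclr{a,b}-\norm{b}^2
\ge\bigl(\gamma L_\varphi\norm{a}-\norm{b}\bigr)\bigl(\norm{a}+\norm{b}\bigr)\ge 0,
\]
where the middle step uses $\sclr{a,b}\le\norm{a}\norm{b}$ and $1-\gamma L_\varphi>0$. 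Equivalently, $\mathrm{Id}+\gamma\nabla\varphi$ is the gradient of $\psi:=\frac12\norm{\cdot}^2+\gamma\varphi$, which is convex and $(1+\gamma L_\varphi)$-smooth. By the Baillon--Haddad theorem it is therefore $\frac{1}{1+\gamma L_\varphi}$-cocoercive, and cocoercivity of an operator is exactly strong monotonicity of its inverse, here $\prox{\gamma\varphi}$. The same duality turns the $(1-\gamma L_\varphi)$-strong monotonicity of $\nabla\psi$ into your Step 2 inequality $\sclr{u-u',x-x'}\ge(1-\gamma L_\varphi)\norm{u-u'}^2$.

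The first displayed inequality of (ii) is correct exactly as written, so no reinterpretation of the roles of $u$ and $x$ is needed. The second one, as written with $\norm{x-x'}^2$, then follows from it because $1-\gamma L_\varphi\le\frac{1}{1+\gamma L_\varphi}$. The intended cocoercivity (with $\norm{u-u'}^2$) is your Step 2. What you should flag, rather than pass over, is the last sentence of (ii): $\prox{\gamma\varphi}$ is not $\frac{1}{1+\gamma L_\varphi}$-Lipschitz in general. For $\varphi=-\frac{L_\varphi}{2}\norm{\cdot}^2$ one gets $\prox{\gamma\varphi}=\frac{1}{1-\gamma L_\varphi}\mathrm{Id}$. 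The correct conclusion is the $\frac{1}{1-\gamma L_\varphi}$-Lipschitz bound you already have; the lower bound $\frac{1}{1+\gamma L_\varphi}\norm{x-x'}\le\norm{u-u'}$ is the Lipschitz property of the inverse. A minor point in Step 1: Lemma~\ref{lemm:prox_nonempty} does not apply, since an $L_\varphi$-smooth $\varphi$ need not be bounded below (same example), but your strong convexity argument gives existence anyway.
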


The convergence analysis of the DRS algorithm and its adaptive variant in \cite{themelis2020douglas} relies on the so‑called Douglas–Rachford envelope (introduced in \cite{patrinos2014douglas}), an augmented Lagrangian function that serves as a Lyapunov potential for the DRS scheme.
Consider the equivalent formulation of Problem~\ref{pb:problem_hilbert}:
\begin{equation*}
\min_{x_1, \, x_2 \in H} \phi_1(x_1)+\phi_2(x_2) \quad  \text{ s.t.} \ x_1 - x_2 = 0.
\end{equation*}
The associated augmented Lagrangian with penalty parameter $\beta>0$ is defined as
\begin{equation}
\label{eq:augmented_lagrangian}
\mathscr{L}_{\beta}(x_1,x_2,y) := \phi_1(x_1) +\phi_2(x_2)+\sclr{y,x_1-x_2}+\frac{\beta}{2} \norm{x_1-x_2}^2.
\end{equation}

\begin{definition}
Let $u \in H$, $\gamma >0$, and $v \in \prox{\gamma \phi_1}(u)$. We denote as $\phi_\gamma^{\mathrm{DR}}(u)$ the Douglas-Rachford envelope, defined as
\begin{equation*}
\phi_\gamma^{\mathrm{DR}}(u)
= \inf_{w \in H}
\left\{
\phi_2(w) +
\phi_1(v) + \langle \nabla \phi_1(v),w-v \rangle
+ \frac{1}{2\gamma} \| w-v \|^2
\right\}.
\end{equation*}
Given $w \in \prox{\gamma \phi_2}(2v-u)$, it holds that
\begin{equation}
\label{eq:augmented_lagrangian_connection}
\phi_\gamma^\mathrm{DR}(u) = \mathscr{L}_{\nicefrac1\gamma}(v,w,\gamma^{-1}(v-u)),
\end{equation}
which in particular shows that $\phi_\gamma^{\mathrm{DR}}(u)$ is real-valued.
\end{definition}

\begin{rema}
The notation $\phi_\gamma^{\mathrm{DR}}(u)$ is slightly ambiguous, in so far as it hides the dependence of the definition with respect to the choice of $v \in \prox{\gamma \phi_1}(u)$. However when $\gamma < \nicefrac{1}{L_{\phi_1}}$, we know by Proposition \ref{prop:prox_regularity} that $v$ is uniquely defined.
\end{rema}

\subsection{ Algorithm description and convergence results}
\label{subsec:drs_nonconvex}

\subsubsection{ Constant-stepsize algorithm}

We first consider the case of the DRS algorithm with constant stepsize $\gamma \in (0,\nicefrac{1}{2 L_{\phi_1}})$. It is described in 
Algorithm \ref{algo:simpleDRS} and relies on the basic DRS update defined in Algorithm \ref{algo:DRS_update}. Note that from time to time, we may call the output points $v$, $w$, and $u^+$ of Algorithm \ref{algo:simpleDRS} the successors of $u$.

\begin{algorithm}
\caption{
\parbox[t]{\linewidth}{
DRS update \label{algo:DRS_update}}
}
\begin{algorithmic}[1]
\setlength\baselineskip{1.25\baselineskip}
\Require
$u \in H$, $\gamma > 0$.
\State $v \gets \prox{\gamma \phi_1}(u)$
\State $w \gets \prox{\gamma \phi_2}(2v-u)$
\State $u^+ \gets u + w-v$
\State $\mathscr{L} \gets \mathscr{L}_{1/\gamma}(v,w,\gamma^{-1}(v-u))$ \\
\Return $\DRS(u;\gamma):=(\mathcal{L},v,w,u^+)$
\end{algorithmic}
\end{algorithm}

\begin{algorithm}
\caption{
\parbox[t]{\linewidth}{
DRS algorithm with constant stepsize 
\label{algo:simpleDRS} \\
$\text{DRS}(\cdot \, ; \cdot )$ is defined in Algorithm \ref{algo:DRS_update}
}
}
\begin{algorithmic}[1]
\setlength\baselineskip{1.25\baselineskip}
\Require
\(u_{\text{init}}\in H\) and $0 < \gamma < \nicefrac{1}{2 L_{\phi_1}}.$
\item[{\bf Initialize:}]
$u^{(0)}=u_{\text{init}}$,
\For{\(k = 0,1,2,\ldots\)}
\State $(\mathscr{L}^{(k)}, v^{(k)},w^{(k)},u^{(k+1)})
\gets \DRS(u^{(k)};\gamma)$;
\EndFor
\end{algorithmic}
\end{algorithm}

The Douglas-Rachford envelope plays a crucial role in the convergence analysis of Algorithm \ref{algo:simpleDRS}. It relies on two crucial properties, described in the next proposition: if $\gamma$ is sufficiently small, then (i) minimizing $\phi$ is equivalent to minimizing $\phi_{\gamma}^{\mathrm{DR}}$, (ii) one iteration of the DRS algorithm generates a quantifiable decay of $\phi_{\gamma}^{\mathrm{DR}}$.

\begin{prop}[Adapted from {\cite[Prop.~3.2, Prop.~3.3, Thm.~3.4, Thm.~4.1, and Rem.~4.2]{themelis2020douglas}}]
\label{fact:property_dre}
Let $0 < \gamma < \nicefrac{1}{L_{\phi_1}}$. Then the following holds true.
\begin{enumerate}
\item The Douglas-Rachford envelope $\phi_\gamma^{\mathrm{DR}}$ is continuous.
\item Minimization equivalence:
\begin{equation*}
\inf \phi = \inf \phi_{\gamma}^{\mathrm{DR}}
\quad \text{and} \quad
\argmin \phi = \prox{\gamma \phi_1}(\argmin \phi_{\gamma}^{\mathrm{DR}}).
\end{equation*}
\item Let $u \in H$. Let $v \in H$, $w \in H$, and $u^+$ be as in Algorithm \ref{algo:DRS_update}. Then
\begin{equation*}
\phi_{\gamma}^{\mathrm{DR}}(u) \leq \phi(v) \quad \text{and} \quad
\phi(w) \leq \phi_{\gamma}^{\mathrm{DR}}(u) - \frac{1 - \gamma L_{\phi_1}}{2\gamma} \|v-w\|^2.
\end{equation*}
\item If $\gamma < \nicefrac{1}{2L_{\phi_1}}$, then
\begin{equation}
\label{eq:suff_decrease_condition}
\phi_\gamma^\mathrm{DR}(u) - \phi_\gamma^\mathrm{DR}(u^+) \geq \frac{c}{(1+\gamma L_{\phi_1})^2} \norm{v-w}^2,
\quad \text{where: } c=\frac{1}{2 \gamma} - L_{\phi_1}.
\end{equation}
\end{enumerate}
\end{prop}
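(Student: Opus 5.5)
The plan is to reduce all four items to two elementary facts about $\phi_1$. First, by Proposition~\ref{prop:prox_regularity}, for $\gamma<\nicefrac{1}{L_{\phi_1}}$ the point $v=\prox{\gamma\phi_1}(u)$ is single-valued and characterized by $u=v+\gamma\nabla\phi_1(v)$, i.e.\ $\nabla\phi_1(v)=\gamma^{-1}(u-v)$. Second, the descent lemma gives $|\phi_1(w)-\phi_1(v)-\langle\nabla\phi_1(v),w-v\rangle|\le \frac{L_{\phi_1}}{2}\|w-v\|^2$. With the first fact, I complete the square in the definition of the envelope. Since $v-\gamma\nabla\phi_1(v)=2v-u$, this gives
\[
\phi_\gamma^{\mathrm{DR}}(u)=\phi_1(v)-\tfrac{\gamma}{2}\|\nabla\phi_1(v)\|^2+\inf_{w}\Big\{\phi_2(w)+\tfrac{1}{2\gamma}\|w-(2v-u)\|^2\Big\}.
\]
The infimum is attained exactly on $\prox{\gamma\phi_2}(2v-u)$, which is consistent with \eqref{eq:augmented_lagrangian_connection}.

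For item (i), I write the last term as the Moreau envelope $e(x)=\frac{1}{2\gamma}\|x\|^2-\sup_w\{\frac1\gamma\langle x,w\rangle-\phi_2(w)-\frac{1}{2\gamma}\|w\|^2\}$. The supremum is convex and lsc, and it is finite everywhere because $\phi_2$ is proper and lower bounded. A finite convex lsc function on a Hilbert space is continuous, so $e$ is continuous. The map $u\mapsto v$ is Lipschitz by Proposition~\ref{prop:prox_regularity}, and $\phi_1,\nabla\phi_1$ are continuous, so $\phi_\gamma^{\mathrm{DR}}$ is continuous as a composition.

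Item (iii) comes next, since (ii) uses it. Taking the candidate $w=v$ in the infimum gives $\phi_\gamma^{\mathrm{DR}}(u)\le\phi(v)$. For the minimizing $w$, the upper descent inequality yields $\phi(w)\le \phi_2(w)+\phi_1(v)+\langle\nabla\phi_1(v),w-v\rangle+\frac{L_{\phi_1}}2\|w-v\|^2=\phi_\gamma^{\mathrm{DR}}(u)-\frac{1-\gamma L_{\phi_1}}{2\gamma}\|w-v\|^2$.

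For item (ii), (iii) gives $\inf\phi\le\phi(w)\le\phi_\gamma^{\mathrm{DR}}(u)$ for every $u$. Conversely, for any $x\in H$ set $u=x+\gamma\nabla\phi_1(x)$, so that $v=x$ and $\phi_\gamma^{\mathrm{DR}}(u)\le\phi(x)$; hence the two infima coincide. If $u$ minimizes the envelope, then $\inf\phi\le\phi(w)\le\inf\phi-\frac{1-\gamma L_{\phi_1}}{2\gamma}\|w-v\|^2$. This forces $w=v$ and $\phi(v)=\inf\phi$. Conversely, if $x\in\argmin\phi$, the same $u=x+\gamma\nabla\phi_1(x)$ minimizes the envelope and $x=\prox{\gamma\phi_1}(u)$.

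Item (iv) is the main obstacle. Let $v^+=\prox{\gamma\phi_1}(u^+)$. I bound $\phi_\gamma^{\mathrm{DR}}(u^+)$ from above by plugging the old $w$ as a candidate into its infimum, while $\phi_\gamma^{\mathrm{DR}}(u)$ is attained at that same $w$. The $\phi_2(w)$ terms then cancel, and the difference is
\[
\phi_1(v)-\phi_1(v^+)+\langle\nabla\phi_1(v),w-v\rangle-\langle\nabla\phi_1(v^+),w-v^+\rangle+\tfrac1{2\gamma}\big(\|w-v\|^2-\|w-v^+\|^2\big).
\]
Next I substitute $\gamma\nabla\phi_1(v)=u-v$ and $\gamma\nabla\phi_1(v^+)=u^+-v^+$, together with $u^+-u=w-v$, and apply the descent lemma to $\phi_1(v)-\phi_1(v^+)$ around $v^+$. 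The goal is to collapse the expression to something like $\big(\frac1{2\gamma}-L_{\phi_1}\big)\|v^+-v\|^2$, possibly using that $\nabla\phi_1$ is $L_{\phi_1}$-Lipschitz to absorb the cross term $\langle\nabla\phi_1(v)-\nabla\phi_1(v^+),\cdot\rangle$. This is where $\gamma<\nicefrac1{2L_{\phi_1}}$ is needed, and I expect the bookkeeping to mirror \cite[Thm.~4.1]{themelis2020douglas}.

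Finally, the lower Lipschitz bound of Proposition~\ref{prop:prox_regularity} gives $\|v^+-v\|\ge\frac{1}{1+\gamma L_{\phi_1}}\|u^+-u\|=\frac{1}{1+\gamma L_{\phi_1}}\|w-v\|$. This yields \eqref{eq:suff_decrease_condition} with $c=\frac1{2\gamma}-L_{\phi_1}$.
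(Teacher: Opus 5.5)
Your plan is correct, and it is essentially the argument of \cite{themelis2020douglas} that the paper cites. The paper gives no proof of its own, only the remark that these estimates are purely quantitative and therefore hold in a Hilbert space. Items (i)--(iii) are complete as you wrote them, and your continuity argument for the Moreau envelope of $\phi_2$ (a finite convex lsc function on a Hilbert space is continuous) is valid in infinite dimension.

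The one piece you left as an expectation is the bookkeeping in (iv), and it closes along the lines you describe. Set $d := v^+ - v$ and $\delta := \nabla\phi_1(v^+) - \nabla\phi_1(v)$. The relations $u = v + \gamma\nabla\phi_1(v)$, $u^+ = v^+ + \gamma\nabla\phi_1(v^+)$ and $u^+ - u = w - v$ give
\[
w - v = d + \gamma\delta, \qquad w - v^+ = \gamma\delta .
\]
Note that your expression is a lower bound for $\phi_\gamma^{\mathrm{DR}}(u) - \phi_\gamma^{\mathrm{DR}}(u^+)$, not an equality, since the old $w$ is only a candidate for $\phi_\gamma^{\mathrm{DR}}(u^+)$. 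Substituting these relations, it collapses to
\[
\phi_1(v) - \phi_1(v^+) - \langle \nabla\phi_1(v^+), v - v^+\rangle + \frac{1}{2\gamma}\|d\|^2 - \gamma\|\delta\|^2
\;\geq\; \Big(\frac{1}{2\gamma} - \frac{L_{\phi_1}}{2} - \gamma L_{\phi_1}^2\Big)\|d\|^2
\;=\; (1+\gamma L_{\phi_1})\, c\, \|d\|^2 .
\]
The inequality uses the lower descent inequality at $v^+$ together with $\|\delta\| \leq L_{\phi_1}\|d\|$. For $\gamma < \nicefrac{1}{2L_{\phi_1}}$ one has $c > 0$. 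Your final step $\|d\| \geq \|w-v\|/(1+\gamma L_{\phi_1})$ then yields \eqref{eq:suff_decrease_condition}, in fact with $(1+\gamma L_{\phi_1})$ rather than $(1+\gamma L_{\phi_1})^2$ in the denominator.
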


The convergence result established in the next theorem is based on an estimate of the decay the Douglas-Rachford envelope, obtained by summing up inequality \eqref{eq:suff_decrease_condition} at each iteration of Algorithm \ref{algo:simpleDRS}. This yields:
\begin{equation*}
\sum_{k=0}^K \| v^{(k)}-w^{(k)} \|^2
\leq \frac{(1 + \gamma L_{\phi_1})^2}{c} \Big( \phi_\gamma^{\mathrm{DR}}(u_{\text{init}}) - \inf \phi_{\gamma}^{\mathrm{DR}} \Big).
\end{equation*}

\begin{theo}[Subsequential convergence of DRS — adapted from {\cite[Thm.~4.3]{themelis2020douglas}}]
\label{thm:subseq_convergence_DRS}
Suppose that Assumption~\ref{assmptn:regularity_patrinons_infite_dim} is satisfied. Let $(\mathscr{L}^{(k)},u^{(k)},v^{(k)},w^{(k)})_{k \in \mathbb{N}}$ denote the sequence generated by Algorithm \ref{algo:simpleDRS}, executed with $\gamma \in (0,\nicefrac{1}{2L_{\phi_1}})$.
The following hold:
\begin{enumerate}
\item The residual $(v^{(k)} - w^{(k)})_{k \in \mathbb{N}}$ strongly converges to 0. Moreover, 
\[
\mathrm{\mathbf{min}}_{i \leq k} \norm{v^{(i)} - w^{(i)}} = o\left(\frac{1}{\sqrt{k}}\right).
\]
\item For each $k \in \mathbb{N}$, one has $\mathrm{dist}\!\left(0,\hat{\partial}\phi(w^{(k)})\right)
\le \frac{1+\gamma L_{\phi_1}}{\gamma}\,\| v^{(k)} - w^{(k)}\|.$
\item The sequences $(v^{(k)})_{k \in \mathbb{N}}$ and $(w^{(k)})_{k \in \mathbb{N}}$ admit the same sets of strong cluster points.
\item Any strong cluster point $x$ of the sequences $(v^{(k)})_{k \in \mathbb{N}}$ and $(w^{(k)})_{k \in \mathbb{N}}$ is a stationary point of $\phi$, i.e., \( 0 \in \hat{\partial}\phi(w^*) \).
Moreover,
\begin{equation*}
\phi(x)= \phi_{\gamma}^{\mathrm{DR}}(x)
= \lim_{k \to \infty} \phi_{\gamma}^{\mathrm{DR}}(u^{(k)}).
\end{equation*}
\end{enumerate}
\end{theo}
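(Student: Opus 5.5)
The proof follows the four items in order, using Proposition~\ref{fact:property_dre} for the quantitative parts and Lemma~\ref{lemm:outer_semicontinuity} for the topological part.

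\emph{Item (i).} Sum the sufficient decrease inequality \eqref{eq:suff_decrease_condition} over $k=0,\dots,K$. The sum telescopes, and by the minimization equivalence of Proposition~\ref{fact:property_dre} we have $\inf\phi_\gamma^{\mathrm{DR}}=\inf\phi>-\infty$ (Assumption~\ref{assmptn:regularity_patrinons_infite_dim}). Hence $\sum_k\|v^{(k)}-w^{(k)}\|^2<\infty$, so the residual tends to $0$ strongly. For the rate, the tail sum $\sum_{i=\lceil k/2\rceil}^{k}\|v^{(i)}-w^{(i)}\|^2$ tends to $0$ and dominates $\tfrac{k}{2}\min_{i\le k}\|v^{(i)}-w^{(i)}\|^2$. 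This gives the $o(1/\sqrt k)$ bound.

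\emph{Item (ii).} Write the optimality conditions of the two proximal steps. Since $\gamma<1/L_{\phi_1}$, Proposition~\ref{prop:prox_regularity} gives $u^{(k)}=v^{(k)}+\gamma\nabla\phi_1(v^{(k)})$. Fermat's rule for $w^{(k)}\in\prox{\gamma\phi_2}(2v^{(k)}-u^{(k)})$ gives $\gamma^{-1}(2v^{(k)}-u^{(k)}-w^{(k)})\in\hat\partial\phi_2(w^{(k)})$. Since $\phi_1$ is $C^1$, $\hat\partial\phi(w)=\nabla\phi_1(w)+\hat\partial\phi_2(w)$, so
\[
\nabla\phi_1(w^{(k)})-\nabla\phi_1(v^{(k)})+\gamma^{-1}(v^{(k)}-w^{(k)})\in\hat\partial\phi(w^{(k)}).
\]
The Lipschitz continuity of $\nabla\phi_1$ then bounds the norm of this element by $(L_{\phi_1}+\gamma^{-1})\|v^{(k)}-w^{(k)}\|$, which is the claimed estimate.

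\emph{Item (iii).} This follows at once from (i): if $v^{(k_j)}\to x$, then $w^{(k_j)}\to x$ as well, and conversely.

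\emph{Item (iv).} This is the step needing care in infinite dimensions. Take $v^{(k_j)}\to x$ strongly, so also $w^{(k_j)}\to x$. Then $u^{(k_j)}=v^{(k_j)}+\gamma\nabla\phi_1(v^{(k_j)})\to u^*:=x+\gamma\nabla\phi_1(x)$ strongly, by continuity of $\nabla\phi_1$, and $2v^{(k_j)}-u^{(k_j)}\to 2x-u^*$. Lemma~\ref{lemm:outer_semicontinuity} applies because weak lsc implies strong lsc, and it yields $x\in\prox{\gamma\phi_2}(2x-u^*)$. Fermat's rule gives $\gamma^{-1}(x-u^*)=-\nabla\phi_1(x)\in\hat\partial\phi_2(x)$, hence $0\in\hat\partial\phi(x)$.

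For the values, note that $x=\prox{\gamma\phi_1}(u^*)$ and $x\in\prox{\gamma\phi_2}(2x-u^*)$. The identity \eqref{eq:augmented_lagrangian_connection} with $v=w=x$ then gives $\phi_\gamma^{\mathrm{DR}}(u^*)=\phi(x)$; this is the quantity the theorem denotes $\phi_\gamma^{\mathrm{DR}}(x)$. By continuity of $\phi_\gamma^{\mathrm{DR}}$ (Proposition~\ref{fact:property_dre}), $\phi_\gamma^{\mathrm{DR}}(u^{(k_j)})\to\phi_\gamma^{\mathrm{DR}}(u^*)$. The full sequence $\phi_\gamma^{\mathrm{DR}}(u^{(k)})$ is nonincreasing and bounded below, so it converges, and its limit must equal the subsequential limit $\phi(x)$.

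The main obstacle is item (iv). It uses strong convergence both in Lemma~\ref{lemm:outer_semicontinuity} and in the continuity of $\nabla\phi_1$ and $\phi_\gamma^{\mathrm{DR}}$. With only weak cluster points neither step would go through, which is why the statement is restricted to strong cluster points.
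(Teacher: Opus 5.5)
Your proof is correct and follows the route the paper indicates: items (i)--(iii) come from Propositions~\ref{prop:prox_regularity} and~\ref{fact:property_dre} exactly as in \cite{themelis2020douglas}, and item (iv) rests on the outer semicontinuity of $\prox{\gamma\phi_2}$ along strongly convergent sequences (Lemma~\ref{lemm:outer_semicontinuity}), which is the point Remark~\ref{remark:inf_dim} singles out for the infinite-dimensional adaptation. You are right to read $\phi_\gamma^{\mathrm{DR}}(x)$ as the envelope evaluated at $u^*=x+\gamma\nabla\phi_1(x)$; taken literally at the point $x$, the identity generally fails (for instance with $\phi_1(y)=y^2/2$, $\phi_2=\iota_{[1,2]}$ and $x=1$).
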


\begin{rema}
\label{remark:inf_dim}
\begin{enumerate}
\item The proof of the fourth statement in the theorem, provided in \cite{themelis2020douglas}, relies on the preparatory results that we have reproduced above. It also relies on the outer-semicontinuity of the prox-operator. This property corresponds to the statement of Lemma \ref{lemm:outer_semicontinuity}, which we have demonstrated for completeness.
\item Theorem \ref{thm:subseq_convergence_DRS} does not say anything concerning the existence of a strong cluster point for the sequences $(v^{(k)})_{k \in \mathbb{N}}$ and $(w^{(k)})_{k \in \mathbb{N}}$. In a finite-dimensional vector space, it suffices to show that one of these sequences is bounded, which is the case if $\phi$ has bounded level-sets (see \cite[Thm.\@ 4.3(iii)]{themelis2020douglas}). The boundedness of the sequences is not sufficient when $H$ has infinite dimension. A tailored analysis will be necessary to the existence of a strong cluster point in the context of optimal control problems.
\item It is not clear whether weak cluster points of the sequences $(v^{(k)})_{k \in \mathbb{N}}$ and $(w^{(k)})_{k \in \mathbb{N}}$ are stationary. A direct adaptation of the proof technique of \cite{themelis2020douglas} is not possible because the result of Lemma \ref{lemm:outer_semicontinuity} does not hold if the sequences $(x^{(k)})_{k \in \mathbb{N}}$ and $(y^{(k)})_{k \in \mathbb{N}}$ only weakly converges.
\end{enumerate}
\end{rema}

\subsubsection{ Adaptive variant in the non-convex case} \label{para:adaptive_variant_nonconvex_case}

The authors of \cite{themelis2020douglas} proposed an adaptive variant of the DRS algorithm for situations in which the Lipschitz constant \(L_{\phi_1}\) is not readily available. In such a case, Algorithm \ref{algo:simpleDRS} is not directly applicable as its convergence is only guaranteed for $\gamma < \nicefrac{1}{2L_{\phi_1}}$. However the sole purpose of this condition is to ensure that the Douglas-Rachford envelope satisfies the following decay condition:
\begin{equation}
\label{eq:decay_condition}
\phi_\gamma^\mathrm{DR}(u) - \phi_\gamma^\mathrm{DR}(u^+) \geq \frac{c}{(1+\gamma L)^2} \norm{v-w}^2,
\end{equation}
for positive values of $L$ and $c$.
This condition is enforced via an adaptive mechanism involving the stepsize $\gamma$ and two parameters $L$ and $c$, updated as follows.
If the decay condition is not satisfied for the current iterate at the current values of $\gamma$, $L$, and $c$, then $\gamma$ is halved, $L$ is doubled, and $c$ is set to $c = \frac{1}{2\gamma} - L$.
Once $\gamma < \nicefrac{1}{2L_{\phi_1}}$ and $L \geq L_{\phi_1}$, 
the decay condition~\eqref{eq:decay_condition} is always satisfied 
by Proposition~\ref{fact:property_dre}, and the parameters are no longer updated. As a consequence, there are atmost finitely many updates of these parameters. Considering the iterations at which $\gamma$, $c$, and $L$ no longer change, the adaptive algorithm behaves as the constant-stepsize algorithm and convergence can be established with exactly the same arguments as  in Theorem \ref{thm:subseq_convergence_DRS}. 

The adaptive mechanism is fully described in Algorithm \ref{algo:adaptive_DRS}. At the beginning of iteration $k$, the current iterate is denoted $u_{-1}$.
The points $v_{-1}$, $w_{-1}$, and $u_0$ are the successors of $u_{-1}$, and $\mathscr{L}_{-1}$ is the value of the Douglas-Rachford envelope. At line 4, the value of the DR-envelope is evaluated at $u_0$. If the decay condition is not satisfied, then the parameters $(\gamma,c,L)$ are updated (line 6) and the successors of $u_{-1}$ (as well as the DR-envelope) corresponding to the new value of $\gamma$ are recomputed (line 7). As soon as the decay condition is satisfied, $u_{-1}$ and its successors points are stored (line 9) and updated (line 10), and the while loop is interrupted (line 11).

\begin{rema}
    Storing the sequence of points in line~9 at each iteration serves a theoretical purpose: it is only needed for the convergence analysis of both the adaptive DRS algorithm and the extended DRS algorithm presented in the 
    next section.
\end{rema}

\begin{algorithm}
\caption{
\parbox[t]{\linewidth}{
DRS algorithm with adaptive stepsize. \label{algo:adaptive_DRS}
\\
$\text{DRS}(\cdot \, ; \cdot )$ is defined in Algorithm \ref{algo:DRS_update}
}
}
\begin{algorithmic}[1]
\setlength\baselineskip{1.25\baselineskip}
\Require
\(u_{\text{init}} \in H\),~$L>0$, $\gamma>0$, so that $c:= \frac{1}{2\gamma}-L >0$
\item[{\bf Initialize:}]
$ u_{-1}=u_{\text{init}} \text{ and } (\mathcal{L}_{-1}, v_{-1},w_{-1},u_0)
=\DRS(u_{\text{init}};\gamma)
$,
\For{\(k = 0,1,2,\ldots\)}
\State \(\text{cond} \gets \text{True}\)
\While{cond}
\State
$(\mathcal{L}_0,v_0,w_0,u_1)
\gets \DRS(u_0;\gamma)$
\If{
$\mathcal L_0 {}>{} \mathcal L_{-1}
- \tfrac{c}{(1+\gamma L)^2} \|w_{-1}-v_{-1}\|^2$
{\bf or}
$\phi(v_0) >\mathcal L_0$
}
\State
$(\gamma,c,L) \gets (\nicefrac\gamma2, 2c, 2L)$
\State
$(\mathcal L_{-1},v_{-1},w_{-1},u_0)
\gets \DRS(u_{-1};\gamma)$
\Else
\State \( \left(\mathcal{L}^{(k)}, u^{(k)},v^{(k)},w^{(k)} \right) \gets \left( \mathcal{L}_{-1}, u_{-1},v_{-1},w_{-1} \right) \)
\State $(\mathcal{L}_{-1}, u_{-1},v_{-1},w_{-1}, u_0) \gets (\mathcal{L}_0,u_{0},v_{0},w_{0},u_1)$
\State \(\text{cond} \gets \text{False}\)
\EndIf
\EndWhile
\EndFor
\end{algorithmic}
\end{algorithm}

\begin{theo}[Subsequential convergence of DRS with adaptive stepsize - adapted from {\cite[Thm. 4.5]{themelis2020douglas}}]
\label{thm:subseq_convergence_adaptive_DRS}
Suppose that Assumption~\ref{assmptn:regularity_patrinons_infite_dim} is satisfied.
Let $(\mathscr{L}^{(k)},u^{(k)},v^{(k)},w^{(k)})_{k \in \mathbb{N}}$ denote the sequence generated by Algorithm \ref{algo:adaptive_DRS}.
After finitely many iterations, $\gamma$ is not updated anymore.
Moreover, the following holds:
\begin{enumerate}
\item The residual $(v^{(k)} - w^{(k)})_{k \in \mathbb{N}}$ strongly converges to 0. Moreover, 
\[
\mathrm{\mathbf{min}}_{i \leq k} \norm{v^{(i)} - w^{(i)}} = o\left(\frac{1}{\sqrt{k}}\right).
\]
\item The sequences $(v^{(k)})_{k \in \mathbb{N}}$ and $(w^{(k)})_{k \in \mathbb{N}}$ admit the same sets of strong cluster points.
\item Any strong cluster point $x$ of the sequences $(v^{(k)})_{k \in \mathbb{N}}$ and $(w^{(k)})_{k \in \mathbb{N}}$ is a stationary point of $\phi$, i.e., \( 0 \in \hat{\partial}\phi(w^*) \).
Moreover,
\begin{equation*}
\phi(x)= \phi_{\gamma}^{\mathrm{DR}}(x)
= \lim_{k \to \infty} \phi_{\gamma}^{\mathrm{DR}}(u^{(k)}).
\end{equation*}
\end{enumerate}
\end{theo}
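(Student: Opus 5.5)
The argument has two stages. First, the parameters $(\gamma,c,L)$ change only finitely many times. After that, the iterates stored at line~9 of Algorithm~\ref{algo:adaptive_DRS} are exactly those of Algorithm~\ref{algo:simpleDRS} started from some iterate, and they satisfy the decay condition \eqref{eq:decay_condition} with fixed constants. The three conclusions then follow as in Theorem~\ref{thm:subseq_convergence_DRS}.

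\emph{Finitely many updates.} Each pass through line~6 halves $\gamma$ and doubles $L$. After $k_0$ updates we have $\gamma = 2^{-k_0}\gamma_{\rm init}$, $L=2^{k_0}L_{\rm init}$ and $c = 2^{k_0}c_{\rm init} = \frac{1}{2\gamma}-L$; the last identity is preserved because both sides are multiplied by $2$. Once $\gamma<\nicefrac{1}{2L_{\phi_1}}$ and $L\ge L_{\phi_1}$, both branches of the test at line~5 fail for every input $u_{-1}$:
\begin{itemize}[label={}]
\item \emph{Envelope test.} The points $(\mathcal L_{-1},v_{-1},w_{-1},u_0)$ come from $\DRS(u_{-1};\gamma)$, and $\mathcal L_0=\phi_\gamma^{\mathrm{DR}}(u_0)$ by \eqref{eq:augmented_lagrangian_connection}. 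Proposition~\ref{fact:property_dre}(iv) gives $\mathcal L_{-1}-\mathcal L_0 \ge \frac{1/(2\gamma)-L_{\phi_1}}{(1+\gamma L_{\phi_1})^2}\|v_{-1}-w_{-1}\|^2$. Since $t\mapsto \frac{1/(2\gamma)-t}{(1+\gamma t)^2}$ is decreasing, the right-hand side is at least $\frac{c}{(1+\gamma L)^2}\|v_{-1}-w_{-1}\|^2$.
\item \emph{Second test.} Proposition~\ref{fact:property_dre}(iii) gives $\phi_\gamma^{\mathrm{DR}}(u_0)\le\phi(v_0)$. So the condition $\phi(v_0)\le\mathcal L_0$ asks for the reverse inequality. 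I need to check this carefully; it is the first place where the statement of the algorithm has to be reconciled with the cited proposition. If the intended inequality is the one in Proposition~\ref{fact:property_dre}(iii), then it holds automatically in this regime.
\end{itemize}
Hence line~6 is executed only while $\gamma\ge\nicefrac{1}{2L_{\phi_1}}$ or $L<L_{\phi_1}$, which allows at most finitely many doublings. Consequently there is an index $\bar k$ after which $(\gamma,c,L)$ is frozen, and from then on $u^{(k+1)}$ is the DRS successor of $u^{(k)}$.

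\emph{Summation and the three conclusions.} For $k\ge\bar k$ the accepted test reads $\phi^{\mathrm{DR}}_\gamma(u^{(k)})-\phi^{\mathrm{DR}}_\gamma(u^{(k+1)})\ge \frac{c}{(1+\gamma L)^2}\|v^{(k)}-w^{(k)}\|^2$. Telescoping and using $\inf\phi_\gamma^{\mathrm{DR}}=\inf\phi>-\infty$ (Proposition~\ref{fact:property_dre}(ii) together with Assumption~\ref{assmptn:regularity_patrinons_infite_dim}) gives $\sum_k\|v^{(k)}-w^{(k)}\|^2<\infty$. This yields the strong convergence of the residual to $0$ and, by the standard argument for a summable nonnegative sequence, the $o(1/\sqrt{k})$ rate for the running minimum.

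Statement~2 follows at once from $\|v^{(k)}-w^{(k)}\|\to0$: a strong limit along a subsequence of one sequence is also a limit of the other along the same subsequence.

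For statement~3, take a subsequence with $v^{(k)}\to x$ strongly, and set $u^{(k)}=v^{(k)}+\gamma\nabla\phi_1(v^{(k)})$ (Proposition~\ref{prop:prox_regularity}). Then $u^{(k)}$ converges strongly. We also have $w^{(k)}\in\prox{\gamma\phi_2}(2v^{(k)}-u^{(k)})$ with $w^{(k)}\to x$. Lemma~\ref{lemm:outer_semicontinuity}, applied to $\phi_2$, gives $x\in\prox{\gamma\phi_2}(x-\gamma\nabla\phi_1(x))$. The optimality condition of this prox problem then gives $0\in\nabla\phi_1(x)+\hat\partial\phi_2(x)=\hat\partial\phi(x)$.

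\emph{Value identity and main obstacle.} The identity $\phi(x)=\lim_k\phi^{\mathrm{DR}}_\gamma(u^{(k)})$ is where I expect the difficulty. The limit exists because the envelope values are monotone and bounded below. Continuity of $\phi_\gamma^{\mathrm{DR}}$ (Proposition~\ref{fact:property_dre}(i)) gives convergence of $\phi_\gamma^{\mathrm{DR}}(u^{(k)})$ to $\phi_\gamma^{\mathrm{DR}}(u^*)$, where $u^*=x+\gamma\nabla\phi_1(x)$. Matching this with $\phi(x)$ is delicate because $\phi_2$ is only lower semicontinuous. I would handle it by sandwiching with Proposition~\ref{fact:property_dre}(iii): the chain $\phi(w^{(k)})\le\phi^{\mathrm{DR}}_\gamma(u^{(k)})\le\phi(v^{(k)})$ together with lower semicontinuity gives $\phi(x)\le\lim\phi^{\mathrm{DR}}_\gamma(u^{(k)})$. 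For the reverse inequality, I would use the prox inequality for $w^{(k)}$ tested at the point $x$, which yields $\limsup_k \phi_2(w^{(k)})\le\phi_2(x)$.
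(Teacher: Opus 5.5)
Your proposal is correct and follows the paper's route. Once $\gamma<1/(2L_{\phi_1})$ and $L\ge L_{\phi_1}$, Proposition~\ref{fact:property_dre} rules out further parameter updates; from then on the algorithm is constant-stepsize DRS, and the conclusions of Theorem~\ref{thm:subseq_convergence_DRS} carry over (you re-derive them rather than cite them, with stationarity obtained via Lemma~\ref{lemm:outer_semicontinuity}).

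Your doubt about the second test is justified. As printed, the update fires whenever $\phi(v_0)>\mathcal L_0$, and the inequality $\phi^{\mathrm{DR}}_\gamma(u_0)\le\phi(v_0)$ of Proposition~\ref{fact:property_dre} makes this the generic case. The test must therefore be read as one that this proposition excludes, e.g.\ $\mathcal L_0>\phi(v_0)$ or $\phi(w_0)>\mathcal L_0$; the paper tacitly does the same by invoking only the decay condition. Likewise, your $\phi^{\mathrm{DR}}_\gamma(u^*)$ with $u^*=x+\gamma\nabla\phi_1(x)$ is the correct reading of the statement's $\phi^{\mathrm{DR}}_\gamma(x)$. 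Your reverse inequality then closes immediately by testing the infimum defining $\phi^{\mathrm{DR}}_\gamma(u^{(k)})$ at $w=x$.
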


\section{Extended DRS algorithm}
\label{seq:DRS_extension}

The objective of this section is to introduce a variant of the DRS algorithm with adaptive stepsize (Algorithm~\ref{algo:adaptive_DRS}) which can handle the situation where $\nabla \phi_1$ is locally Lipschitz continuous, while so far it was assumed in Assumption~\ref{assmptn:regularity_patrinons_infite_dim} to be globally Lipschitz continuous.
All along the section, we consider the following assumption.

\begin{assump}
\label{ssmptn:relaxed_assumption}
\begin{enumerate}
\item
Both functions $\phi_1$ and $\phi_2$ are lower bounded and weakly lower semicontinuous.
\item The function $\phi_1$ is continuously differentiable with locally Lipschitz continuous gradient.
\item The function $\phi_2$ is proper and has bounded domain.
\end{enumerate}
\end{assump}

\begin{rema}
\label{rem:existence_sol}
While we have relaxed our assumption on $\nabla \phi_1$, we now require that $\phi_2$ has bounded domain. Under these new assumptions, $\phi$ is proper, has bounded domain and is weakly lower semicontinuous, which implies the existence of minimizer.
\end{rema}

\subsection{ Algorithm}
\label{subsec:extended_drs_algorithm}

Our extension of the DRS algorithm is motivated by the following two ideas:
\begin{itemize}
\item For sufficiently small stepsizes $\gamma$, one can uniformly bound the distance $\| u^{(k)}- v^{(k)} \|$: this allows to bound the whole sequence $(u^{(k)},v^{(k)},w^{(k)})_{k \in \mathbb{N}}$.
\item Via an additional adaptive mechanism, the iterates remain in a bounded 
subset of $H$ and can be shown to coincide with a sequence obtained by applying 
the DRS algorithm to $\tilde{\phi}_1 + \phi_2$, where $\tilde{\phi}_1$ is a 
mollified version of $\phi_1$ with globally Lipschitz continuous gradient on $H$. 
This allows us to leverage Theorem~\ref{thm:subseq_convergence_adaptive_DRS}.
\end{itemize}
The first idea is formalized in the next lemma.

\begin{lemm}
\label{lemma:equiv_two_prox_operators}
Let $C>0$ and $S:=\mathrm{\mathbf{dom}}\,\phi_2 + \bar{B}(C)$. There exists a threshold $\gamma_0 > 0$ such that for all stepsizes $\gamma \in (0, \gamma_0]$, for all $u \in S$, and for all  $v \in \prox{\gamma \phi_1}(u)$,
\begin{equation}
\label{lemma:eq_7.3.1}
\norm{v-u} \leq C.
\end{equation}
In particular, for $w \in \prox{\gamma \phi_2}(2v-u)$ and $u^+= u + w-v$, we have that $u^+ \in S$.
\end{lemm}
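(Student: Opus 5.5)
The plan is to compare the value of the proximal objective at $v$ with its value at the trial point $y=u$, and then to bound $\phi_1$ from above on $S$ using the local Lipschitz continuity of $\nabla \phi_1$.

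\emph{Basic comparison.} Since $v\in\prox{\gamma\phi_1}(u)$, it minimizes $y\mapsto \phi_1(y)+\frac{1}{2\gamma}\norm{y-u}^2$, so
\begin{equation*}
\phi_1(v)+\frac{1}{2\gamma}\norm{v-u}^2\le \phi_1(u),
\qquad\text{hence}\qquad
\norm{v-u}^2\le 2\gamma\Big(\phi_1(u)-\inf_H \phi_1\Big).
\end{equation*}
Here $\inf_H\phi_1>-\infty$ by Assumption~\ref{ssmptn:relaxed_assumption}. It therefore suffices to show that $M:=\sup_{u\in S}\phi_1(u)<+\infty$. Then I set $\gamma_0:=C^2/\big(2(M-\inf\phi_1)\big)$, or take $\gamma_0$ arbitrary if $M=\inf\phi_1$. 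For every $\gamma\in(0,\gamma_0]$ and every $u\in S$ this gives $\norm{v-u}\le C$, uniformly over the (possibly non-unique) choice of $v$.

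\emph{Upper bound of $\phi_1$ on $S$.} Since $\mathrm{\mathbf{dom}}\,\phi_2$ is bounded, $S$ is bounded, so $S\subset \bar B(R)$ for some $R>0$. By the definition of $C^{1,1}_{\rm loc}(H)$ used in the paper, $\nabla\phi_1$ is Lipschitz on the bounded convex set $\bar B(R)$, with some constant $L_R$. Hence
\begin{equation*}
\norm{\nabla\phi_1(x)}\le \norm{\nabla\phi_1(0_H)}+L_R R \quad\text{for all } x\in\bar B(R).
\end{equation*}
Applying the mean value inequality along the segment $[0_H,x]\subset\bar B(R)$ then gives $\phi_1(x)\le \phi_1(0_H)+R\big(\norm{\nabla\phi_1(0_H)}+L_RR\big)$, so $M<\infty$. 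This is the only point where infinite dimension could cause trouble: continuity alone would not give boundedness on bounded sets, so I will use the Lipschitz-on-bounded-sets convention explicitly. I expect this to be the main, though mild, obstacle.

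\emph{Invariance of $S$.} Let $w\in\prox{\gamma\phi_2}(2v-u)$; this set is non-empty by Lemma~\ref{lemm:prox_nonempty}. Because $\phi_2$ is proper, the minimal value of $\phi_2(\cdot)+\frac{1}{2\gamma}\norm{\cdot-(2v-u)}^2$ is finite, so $\phi_2(w)<\infty$ and $w\in\mathrm{\mathbf{dom}}\,\phi_2$. Finally, $u^+=w+(u-v)$ with $\norm{u-v}\le C$, so $u^+\in \mathrm{\mathbf{dom}}\,\phi_2+\bar B(C)=S$.
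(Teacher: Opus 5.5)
Your proof is correct and follows the same route as the paper. You compare the prox objective at $v$ with its value at $u$, bound $\phi_1$ from above on the bounded set $S$ using the Lipschitz-on-bounded-sets property of $\nabla\phi_1$, take $\gamma_0 = C^2/\bigl(2(\sup_S\phi_1-\inf_H\phi_1)\bigr)$, and get invariance from $w\in\mathrm{\mathbf{dom}}\,\phi_2$; your mean value argument, the degenerate case $\sup_S\phi_1=\inf_H\phi_1$, and the justification that $w\in\mathrm{\mathbf{dom}}\,\phi_2$ just spell out what the paper states in one line.
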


\begin{proof}
From the definition of the prox operator, we have
\begin{equation}
\label{eq:prox_estimate}
\phi_1(v) + \frac{1}{2\gamma} \norm{v-u}^2 \leq \phi_1(u),
\end{equation}
which implies that
\begin{equation*}
\norm{v - u} \leq \sqrt{2\gamma \left( \phi_1(u) - \phi_1(v) \right)}.
\end{equation*}
Set $M= \sup_{S} \phi_1$ and define $-K$ as a lower bound of $\phi_1$. Note that $M$ is finite since $S$ is bounded and $\phi_1$ has locally Lipschitz gradient, which implies that $\phi_1$ is bounded on bounded sets. Then $\| v-u \| \leq \sqrt{2 \gamma (M+K)}$ and so inequality \eqref{lemma:eq_7.3.1} holds true whenever $\gamma \leq \gamma_0:= \frac{C^2}{2(M+K)}$.
The second part of the lemma immediately follows, since $w \in \mathrm{\mathbf{dom}}(\phi_2)$.
\end{proof}

Instead of computing explicitly the threshold $\gamma_0$ obtained in Lemma \ref{lemma:equiv_two_prox_operators}, we suggest to use an adaptive mechanism, consisting in reducing the value of $\gamma$ as long as $\| v-u \| > C$. We incorporate this mechanism to the basic DRS update. We denote it $\text{DRS}_{\mathrm{loc}}$ and describe it in Algorithm \ref{algo:drs_init}.

\medskip

\begin{rema}
We described in Algorithm \ref{algo:adaptive_DRS} a first adaptive mechanism, enforcing the decay condition. Algorithm \ref{algo:drs_init} below introduces a new adaptive mechanism, enforcing the boundedness of the iterates. We will refer to these two mechanisms as the decay and the boundedness mechanism. 
\end{rema}

\begin{algorithm}
\caption{
\parbox[t]{\linewidth}{
Localizing DRS update with distance parameter $C$
}
}
\label{algo:drs_init}
\begin{algorithmic}[1]
\setlength\baselineskip{1.25\baselineskip}
\Require
$u \in \mathrm{\mathbf{dom}}\, \phi_2, C>0, \gamma >0$, $L>0$ and $c= \frac{1}{2\gamma}-L>0$. 
\State $(\gamma',c',L') \gets (\gamma,c,L)$
\State $v \gets  \prox{\gamma' \phi_1}(u)$
\While{$\norm{v - u} > C$}
\State
$(\gamma',c',L') \gets (\nicefrac{\gamma'}2, 2c',2L')$
\State $v \gets \prox{\gamma' \phi_1}(u)$
\EndWhile
\State $w \gets \prox{\gamma' \phi_2}(2v - u)$
\State $u^+ \gets u + w-v$
\State $\mathcal{L} \gets \mathcal{L}_{1/{\gamma'}}(v,w,(v-u)/\gamma')$. \\
\Return $\DRS_{\mathrm{loc}}(u; C, \gamma,L,c)
:= \big( (\mathcal{L},v,w,u^+),(\gamma',c',L') \big)$
\end{algorithmic}
\end{algorithm}

\begin{lemm}
Algorithm \ref{algo:drs_init} terminates.
\end{lemm}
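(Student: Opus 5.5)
The plan is to show that the while loop in Algorithm~\ref{algo:drs_init} runs only finitely many times. The remaining steps (lines 7--9) are single evaluations, and each is well defined: by Lemma~\ref{lemm:prox_nonempty} under Assumption~\ref{ssmptn:relaxed_assumption}, the prox sets of $\phi_1$ and $\phi_2$ are nonempty for every stepsize. The termination argument is a direct application of Lemma~\ref{lemma:equiv_two_prox_operators}.

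First, I apply Lemma~\ref{lemma:equiv_two_prox_operators} with the given constant $C>0$. Since the input satisfies $u \in \mathrm{\mathbf{dom}}\,\phi_2 \subset \mathrm{\mathbf{dom}}\,\phi_2 + \bar{B}(C) = S$, the lemma gives a threshold $\gamma_0>0$, depending only on $C$, $\phi_1$ and $\phi_2$. For every $\gamma' \in (0,\gamma_0]$ and every $v \in \prox{\gamma'\phi_1}(u)$, it yields $\norm{v-u}\le C$. Within the loop, the stepsize after $j$ halvings is $\gamma' = \gamma/2^j$.

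Next, I choose $j_0 := \min\{ j \in \mathbb{N} : \gamma/2^j \le \gamma_0\}$, which is finite because $\gamma/2^j \to 0$. If the loop has not exited earlier, then after $j_0$ passes we have $\gamma' \le \gamma_0$. Whatever element $v$ is selected from $\prox{\gamma'\phi_1}(u)$, it satisfies $\norm{v-u}\le C$, so the loop condition fails and the loop exits. Hence it performs at most $j_0$ iterations, and the algorithm terminates. Because the bound from Lemma~\ref{lemma:equiv_two_prox_operators} is uniform over the prox set, the possible multivaluedness of the prox causes no trouble.

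The only point requiring care is to check that the hypotheses of Lemma~\ref{lemma:equiv_two_prox_operators} are met. That lemma needs $\phi_1$ to be bounded above on the bounded set $S$, which holds because $\phi_2$ has bounded domain and $\nabla\phi_1$ is locally Lipschitz. It also needs $\phi_1$ to be bounded below, which is part of Assumption~\ref{ssmptn:relaxed_assumption}. Finally, the input point must lie in $S$, which is guaranteed by the requirement $u\in\mathrm{\mathbf{dom}}\,\phi_2$. I expect no real obstacle. I would add the remark that the updates of $c'$ and $L'$ are bookkeeping only and keep $c' = \frac{1}{2\gamma'} - L' > 0$, since $\frac{1}{2(\gamma'/2)} - 2L' = 2c'$.
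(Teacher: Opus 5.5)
Your proposal is correct and follows the paper's argument: Lemma~\ref{lemma:equiv_two_prox_operators} gives a threshold $\gamma_0$ below which $\|v-u\|\le C$ for every element of $\prox{\gamma'\phi_1}(u)$, so the loop exits after finitely many halvings of $\gamma'$. Your extra checks (nonempty prox sets, $u\in S$, and $c'>0$ being preserved) are details the paper leaves implicit.
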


\begin{proof}
Let $\gamma_0$ be given by Lemma \ref{lemma:equiv_two_prox_operators}. If $\gamma \leq \gamma_0$, then $\| v-u\| \leq C$. So the while loop of the algorithm is executed at most finitely many times.
\end{proof}

Our extension of DRS is described in Algorithm~\ref{algo:adaptive_extended_DRS}. 
Compared to Algorithm~\ref{algo:adaptive_DRS}, the only difference (aside from 
the initialization) lies at line~3, where the successors of $u_{-1}$ are 
(re)computed using $\text{DRS}_{\text{loc}}$.

\begin{algorithm}
\caption{
\parbox[t]{\linewidth}{
Extended DRS algorithm \label{algo:adaptive_extended_DRS}
\\
$\text{DRS}_{\textrm{loc}}(\cdot \, ; \cdot, \cdot, \cdot, \cdot )$ is defined in Algorithm \ref{algo:drs_init}
}
}
\begin{algorithmic}[1]
\setlength\baselineskip{1.25\baselineskip}
\Require
\(u_{\text{init}}\in \mathrm{\mathbf{dom}}\, \phi_2\), $C>0,L>0$ and $\gamma>0$, so that $c:= \frac{1}{2\gamma}-L >0$
\item[{\bf Initialize:}]
$u_{-1}=u_{\text{init}} \text{ and }\big( (\mathcal{L}_{-1}, v_{-1},w_{-1},u_0),(\gamma,c,L)\big)
=\DRS_{\mathrm{loc}}(u_{-1};C,\gamma,L,c)
$,
\For{\(k = 0,1,2,\ldots\)}
\State \(\text{cond} \gets \text{True}\)
\State $\big( (\mathcal{L}_{-1}, v_{-1},w_{-1},u_0),(\gamma,c,L)\big)
\gets \DRS_{\mathrm{loc}}(u_{-1};C,\gamma,L,c)$
\While{cond}
\State
$\big( (\mathcal{L}_0,v_0,w_0,u_1),(\gamma,c,L)\big)
= \DRS(u_0;\gamma)$
\If{
$\mathcal L_0 {}>{} \mathcal L_{-1}
- \tfrac{c}{(1+\gamma L)^2} \|w_{-1}-v_{-1}\|^2$
{\bf or}
$\phi(v_0) >\mathcal L_0$
}
\State
$(\gamma,c,L) \gets (\nicefrac\gamma2, 2c, 2L)$
\State
$(\mathcal L_{-1},v_{-1},w_{-1},u_0)
\gets \DRS(u_{-1};\gamma)$
\Else
\State \( \left(\mathcal{L}^{(k)}, u^{(k)},v^{(k)},w^{(k)} \right) \gets \left( \mathcal{L}_{-1}, u_{-1},v_{-1},w_{-1} \right) \)
\State $(\mathcal{L}_{-1}, u_{-1},v_{-1},w_{-1}, u_0) \gets (\mathcal{L}_0,u_{0},v_{0},w_{0},u_1)$
\State \(\text{cond} \gets \text{False}\)
\EndIf
\EndWhile
\EndFor
\end{algorithmic}
\end{algorithm}

\subsection{ Convergence of the extended DRS}
\label{subsec:cv_extended_drs}

We investigate in this subsection the convergence of Algorithm \ref{algo:adaptive_extended_DRS} and its well-posedness. We will need the following lemma.

\begin{lemm}
\label{lemm:phi_1}
For any $\gamma_1>0$ and for any $C>0$, there exists a continuously differentiable function $\tilde{\phi}_1 \colon H \rightarrow \R$, with globally Lipschitz gradient, such that for any $u \in S$, with $S:=\mathrm{\mathbf{dom}}\,\phi_2 + \bar{B}(C)$, for any $\gamma \in (0,\gamma_1]$, and for any $v \in \prox{\gamma \phi_1}(u)$
\begin{equation*}
    \tilde{\phi}_1(v)= \phi_1(v),
    \quad
    v \in \prox{\gamma \tilde{\phi}_1}(u),
    \quad \text{and} \quad
    \tilde{\phi}_\gamma^{\mathrm{DR}}(u)
    = \phi_\gamma^{\mathrm{DR}}(u),
\end{equation*}
where $\tilde{\phi}_\gamma^{\mathrm{DR}}$ denotes the Douglas-Rachford envelope associated with $\tilde{\phi}_1 + \phi_2$.
\end{lemm}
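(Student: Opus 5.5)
The plan is to take $\tilde{\phi}_1$ to be a smooth cutoff of $\phi_1$ that coincides with $\phi_1$ on a large ball and equals the constant lower bound far away. The ball is chosen so large that every prox point lies strictly inside it, and so large that leaving it costs more in the quadratic term than can be gained in the function value. Throughout, let $-K$ be a lower bound of $\phi_1$.

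\emph{Step 1: localize the prox points.} As in the proof of Lemma~\ref{lemma:equiv_two_prox_operators}, set $M:=\sup_S \phi_1$. This is finite because $S$ is bounded and a $C^{1,1}_{\rm loc}$ function is bounded on bounded sets. Inequality~\eqref{eq:prox_estimate} then gives, for all $u\in S$, all $\gamma\le\gamma_1$ and all $v\in\prox{\gamma\phi_1}(u)$,
\[
\|v-u\|\le C_1:=\sqrt{2\gamma_1(M+K)} .
\]
Set $R_0:=\sup_{u\in S}\|u\|$ and $R:=R_0+C_1+1$. Every such $v$ then lies in the open ball of radius $R-1$, and every $y$ with $\|y\|\ge R$ satisfies $\|y-u\|> C_1$.

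\emph{Step 2: construction.} Fix a $C^\infty$ function $\theta:\R\to[0,1]$ with $\theta(s)=1$ for $s\le R^2$ and $\theta(s)=0$ for $s\ge 4R^2$. Define
\[
\tilde{\phi}_1(y):=\theta(\|y\|^2)\,\phi_1(y)+\bigl(1-\theta(\|y\|^2)\bigr)(-K).
\]
This function is $C^1$ and coincides with $-K$ outside $\bar B(2R)$. On $\bar B(2R)$, the functions $\phi_1$ and $\nabla\phi_1$ are bounded and $\nabla\phi_1$ is Lipschitz. The map $y\mapsto\theta(\|y\|^2)$ is bounded with bounded, Lipschitz gradient on $H$. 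By the product rule, $\nabla\tilde\phi_1$ is therefore Lipschitz on $\bar B(2R)$ and zero outside it. I expect the main technical care to be here: one must glue across the sphere of radius $2R$ to obtain a global Lipschitz constant. Since the gradient vanishes on a neighbourhood of that sphere, a segment argument handles this. Two further facts hold: $\tilde\phi_1\ge -K$ everywhere, because it is a convex combination of $\phi_1\ge -K$ and $-K$; and $\tilde\phi_1=\phi_1$ on $\bar B(R)$.

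\emph{Step 3: the prox and the envelope are unchanged.} Let $u\in S$, $\gamma\le\gamma_1$, $v\in\prox{\gamma\phi_1}(u)$. Since $\|v\|<R$, we get $\tilde\phi_1(v)=\phi_1(v)$. Since $v$ lies in the open ball on which the two functions agree, also $\nabla\tilde\phi_1(v)=\nabla\phi_1(v)$. To see that $v\in\prox{\gamma\tilde\phi_1}(u)$, we compare the value at $v$ with the value at an arbitrary $y$.
\begin{itemize}
\item If $\|y\|\le R$, then $\tilde\phi_1(y)=\phi_1(y)$, and the inequality $\tilde\phi_1(y)+\frac1{2\gamma}\|y-u\|^2\ge\tilde\phi_1(v)+\frac1{2\gamma}\|v-u\|^2$ follows from the minimality of $v$ for $\phi_1$.
\item If $\|y\|>R$, then $\|y-u\|>C_1$, so
\[
\tilde\phi_1(y)+\tfrac1{2\gamma}\|y-u\|^2> -K+\tfrac{\gamma_1(M+K)}{\gamma}\ge M\ge\phi_1(u)\ge \phi_1(v)+\tfrac1{2\gamma}\|v-u\|^2 .
\]
\end{itemize}
Finally, the envelope $\phi_\gamma^{\mathrm{DR}}(u)$ depends on $\phi_1$ only through $v$, $\phi_1(v)$ and $\nabla\phi_1(v)$. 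All three coincide for $\tilde\phi_1$, with the same choice of $v$. Hence $\tilde\phi^{\mathrm{DR}}_\gamma(u)=\phi^{\mathrm{DR}}_\gamma(u)$.
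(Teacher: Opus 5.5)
Your proof is correct and follows the same route as the paper: a smooth radial cutoff of $\phi_1$ on a ball containing $S+\bar{B}(C_1)$, then the observation that points outside that ball pay too much in the quadratic term to compete with $v$. The only variation is that you blend to the constant $-K$ rather than to $0$ (the paper uses $\phi_1(x)\psi(\|x\|^2/R^2)$). This keeps the lower bound $-K$ and lets you fix $R=R_0+C_1+1$ explicitly by comparing with $\phi_1(u)\le M$, instead of enlarging $R$ until it beats the paper's extra constant $M_1$. One small imprecision: your gradient vanishes on and outside the sphere of radius $2R$, not on a neighbourhood of it, but vanishing on the sphere is all the segment argument needs.
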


\begin{proof}
Let $\psi \in C^{\infty}(\mathbb{R}^+)$  be such that
\begin{equation}
\label{eq:properties_psi}
\psi(x) =
\begin{cases}
1 & \text{if } x \in [0,1], \\
\in [0,1] & \text{if } x \in [1,2], \\
0 & \text{if } x \geq 2.
\end{cases}
\end{equation}
Such a function exists, an example is given by
$\psi(x)= \exp\left( - \frac{(x - 1)^2}{2-x} \right)$, for $x \in (1,2)$.
For any $R>0$, we define the function $\tilde{\phi}_1^R \colon H \rightarrow \R$ as
\begin{equation*}
\tilde{\phi}_1^R(x) := \phi_1(x) \psi\left(\frac{\norm{x}^2}{R^2}\right).
\end{equation*}
We will make use of the following properties, whose verification is elementary and left to the reader.
\begin{itemize}
\item $\tilde{\phi}_1^R \equiv \phi_1$ on $\bar{B}(R)$ and $\tilde{\phi}_1^R(x)= 0$ whenever $\| x \| \geq \sqrt{2} R$.
\item The function $\tilde{\phi}_1^R$ is continuously differentiable, with globally Lipschitz continuous gradient.
\item It holds that $\inf_{H} \tilde{\phi}_1^R \geq \min \big( \inf_H \phi_1, 0 \big)$.
\end{itemize}
Let $R>0$ be large enough so that $S \subseteq \bar{B}(R)$, which in particular implies that $\phi_1= \tilde{\phi}_1^R$ on $\mathbf{dom}(\phi_2)$.
Let $u \in S$ and let $v \in \prox{\gamma \phi_1}(u)$. We need to prove that $v \in \prox{\gamma \tilde{\phi}^R_1}(u)$. To achieve this, we will possibly increase the value of $R$, in a way that is independent of the choice of $u$, $\gamma$, and $v$. From the definition of $v$, we have that
\begin{equation*}
\| v - u \|^2
\leq 2 \gamma \big( \phi_1(u)- \phi_1(v) \big)
\leq 2 \gamma_1 \left( \sup_{S} \phi_1 - \inf_{H} \phi_1 \right) =: D^2.
\end{equation*}
Recall that $S$ is bounded, so $\phi_1$ is bounded on $S$, and by assumption, $\phi_1$ is bounded from below on $H$, so $D$ is finite. As a consequence, $v \in S + \bar{B}(D)$. We increase the value of $R$ so that $S + \bar{B}(D) \subseteq \bar{B}(R)$. It follows that $\tilde{\phi}_1^{R}(v) = \phi_1(v)$.

Next, let $v' \in H$. We aim to verify that
\begin{equation}
\label{eq:to_show_prox_mollification_bis}
2\gamma \Big( \tilde{\phi}_1^R(v) - \tilde{\phi}_1^R(v') \Big) \leq 
\norm{v' - u}^2 - \norm{v - u}^2.
\end{equation}
If $v' \in \bar{B}(R)$, then $\tilde{\phi}_1^R(v') = \phi_1(v')$, and \eqref{eq:to_show_prox_mollification_bis} holds. Assume that $v' \notin \bar{B}(R)$. We have
\begin{equation*}
\norm{v' - u} \geq \norm{v'} - \norm{u} \geq R - M_0 \geq 0,
\end{equation*}
where $M_0 := \sup_{s \in S} \norm{s}$. Thus, on the one hand,
\begin{equation*}
\norm{v' - u}^2 - \norm{v - u}^2 \geq (R-M_0)^2 - D^2.
\end{equation*}
On the other hand, since $\tilde{\phi}_1^R(v)= \phi_1(v)$ and $v \in S + \bar{B}(D)$, we have that
\begin{equation*}
2\gamma \left( \tilde{\phi}_1^R(v) - \tilde{\phi}_1^R(v') \right)
\leq
2 \gamma_1 \left(
\sup_{S+ \bar{B}(D)} \phi_1
- \min\left( \inf_H \phi_1, 0 \right) \right)=: M_1.
\end{equation*}
We increase $R$, if necessary, so that $(R-M_0)^2 - D^2 \geq M_1$. Then inequality \eqref{eq:to_show_prox_mollification_bis} holds and finally, $v \in \prox{\gamma \tilde{\phi}_1^R}(u)$. We have already verified that $\tilde{\phi}_1^R(v)= \phi_1(v)$. The equality of the Douglas-Rachford envelopes immediately follows. The lemma is proved, with $\tilde{\phi}_1 = \tilde{\phi}_1^R$.
\end{proof}

The following lemma establishes the termination of the adaptive procedures at each iteration of Algorithm~\ref{algo:adaptive_extended_DRS}, ensuring the algorithm is well-posed, and shows the boundedness of its iterates.

\begin{lemm}
    \label{lemm:well_posedness_algo}
    Each iteration of Algorithm \ref{algo:adaptive_extended_DRS} terminates. Moreover, for every $k \in \mathbb{N}$, it holds that
    \begin{equation*}
    \| v^{(k)}-u^{(k)} \| \leq C
    \quad \text{and} \quad
    u^{(k)} \in S, 
    \end{equation*}
    where $C$ denotes the distance parameter required by Algorithm~\ref{algo:adaptive_extended_DRS}.
\end{lemm}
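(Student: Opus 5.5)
I will argue by induction on $k$, carrying along the invariant that the current point $u_{-1}$ at the start of iteration $k$ lies in $S=\mathrm{\mathbf{dom}}\,\phi_2+\bar{B}(C)$. For $k=0$ this holds because $u_{\text{init}}\in\mathrm{\mathbf{dom}}\,\phi_2\subseteq S$. Lemma \ref{lemma:equiv_two_prox_operators} is stated for $u\in S$ rather than only for $u\in\mathrm{\mathbf{dom}}\,\phi_2$, so the argument of the termination lemma for Algorithm \ref{algo:drs_init} applies verbatim to any $u\in S$. Thus the call to $\DRS_{\mathrm{loc}}$ at line 3 terminates, and it returns $v_{-1}$ with $\|v_{-1}-u_{-1}\|\le C$ and a stepsize $\gamma\le\gamma_{\text{prev}}$. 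The second part of Lemma \ref{lemma:equiv_two_prox_operators} does not apply directly here, since $\gamma$ need not be below $\gamma_0$. Instead, note that $w_{-1}\in\mathrm{\mathbf{dom}}\,\phi_2$ (by Lemma \ref{lemm:prox_nonempty} the prox is nonempty, and $\phi_2$ is proper, so any minimizer has finite value). Hence $u_0=w_{-1}+(u_{-1}-v_{-1})\in\mathrm{\mathbf{dom}}\,\phi_2+\bar B(C)=S$.

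Next, I will show that the while loop terminates. Fix $\gamma_1$ equal to the stepsize at entry of the loop and apply Lemma \ref{lemm:phi_1} with $\gamma_1$ and $C$, producing $\tilde\phi_1$ with $L_{\tilde\phi_1}$-Lipschitz gradient. The points $u_{-1}$ and $u_0$ belong to $S$. For every $\gamma\le\gamma_1$ occurring in the loop, the prox of $\phi_1$ at these points is therefore also a prox of $\tilde\phi_1$, with the same value of $\phi_1$, and the DR envelopes coincide. The quantities $\mathcal L_{-1}$ and $\mathcal L_0$ are envelope values by \eqref{eq:augmented_lagrangian_connection}, and $\phi(v_0)=\tilde\phi(v_0)$. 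Consequently the tests at line 6 are exactly those of Algorithm \ref{algo:adaptive_DRS} applied to $\tilde\phi_1+\phi_2$.

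Once $\gamma<1/(2L_{\tilde\phi_1})$ and $L\ge L_{\tilde\phi_1}$, Proposition \ref{fact:property_dre}(iii)--(iv) applied to $\tilde\phi_1$ shows that neither condition triggers. Halving $\gamma$ while doubling $L$ reaches this regime after finitely many updates. On reaching it, the prox of $\tilde\phi_1$ is single-valued by Proposition \ref{prop:prox_regularity}, so the computed $v$ is the one to which Proposition \ref{fact:property_dre} refers.

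The recomputation at line 8 uses the plain $\DRS$ with a smaller $\gamma$. I must check that it preserves $\|v_{-1}-u_{-1}\|\le C$, and hence $u_0\in S$. I will do this with Lemma \ref{lemm:monotonicity_prox}: decreasing $\gamma$ can only decrease $\|v-u\|$, so the bound $C$ obtained at line 3 persists, and the same argument as above gives $u_0\in S$ again. When the loop exits, line 10 stores $u^{(k)}=u_{-1}\in S$ and $v^{(k)}=v_{-1}$ with $\|v^{(k)}-u^{(k)}\|\le C$. The new $u_{-1}$ is the old $u_0\in S$, which closes the induction.

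I expect the main obstacle to be exactly this interplay between the two adaptive mechanisms. The loop uses $\DRS$ rather than $\DRS_{\mathrm{loc}}$, so the boundedness of $v_0-u_0$ is not enforced there; this is fine because $v_0$ is only tested and is recomputed at the next line 3. One must also make sure that the data used in the tests live in $S$, so that Lemma \ref{lemm:phi_1} can be applied, and that the multivaluedness of the prox is harmless before $\gamma$ becomes small.
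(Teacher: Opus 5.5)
Your proof is correct and follows the paper's argument. It uses induction on the invariant $u_{-1}\in S$, Lemma~\ref{lemm:phi_1} to identify the while loop with one iteration of Algorithm~\ref{algo:adaptive_DRS} applied to $\tilde\phi_1+\phi_2$, and Lemma~\ref{lemm:monotonicity_prox} to preserve $\|v_{-1}-u_{-1}\|\le C$ under the recomputations at line 8. Your extra checks fill in steps the paper leaves implicit: that $u_0\in S$ already after line 3, so $\mathcal L_0$ is a value of $\tilde\phi^{\mathrm{DR}}_\gamma$, and that $\DRS_{\mathrm{loc}}$ is legitimately called at points of $S$ rather than of $\mathrm{\mathbf{dom}}\,\phi_2$.

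One caveat, which the paper's proof shares. The second test as printed, $\phi(v_0)>\mathcal L_0$, is not excluded by Proposition~\ref{fact:property_dre}(iii): that result gives $\mathcal L_0\le\phi(v_0)$, and $\phi(v_0)=+\infty$ whenever $v_0\notin\mathrm{\mathbf{dom}}\,\phi_2$. The test has to be read as $\phi(w_0)>\mathcal L_0$, and that is what the second inequality in (iii) rules out once $\gamma<1/L_{\tilde\phi_1}$.
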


\begin{proof}
Let $\tilde{\phi}_1$ be the function constructed in Lemma \ref{lemm:phi_1} where $\gamma_1$ is taken as the initial value of $\gamma$ in Algorithm \ref{algo:adaptive_extended_DRS}.
Let $k \in \mathbb{N}$. Assume that the Algorithm has run until iteration $k$ and that the iterate $u_{-1}$ lies in $S$. Then the output $(\mathcal{L}_{-1}, v_{-1},w_{-1},u_0)$ of $\text{DRS}_{\text{loc}}$, obtained at line 3 is such that $\| v_{-1}- u_{-1} \| \leq C$. The while loop executed afterwards (line 4-11) corresponds to one iteration of the adaptive DRS algorithm (Algorithm \ref{algo:adaptive_DRS}) to the function $\tilde{\phi}_1 + \phi_2$ and therefore terminates. During this loop, the successor point $v_{-1}$, $w_{-1}$ and $u_0$ are possibly re-computed, but always for a smaller value of $\gamma$ than the one that was employed. Lemma \ref{lemm:monotonicity_prox} ensures that the inequality $\| v_{-1}- u_{-1} \| \leq C$ remains true until line 10. This implies that the point $u_0$ obtained at the end of the while loop lies in $S$. At line 11, this point replaces $u_{-1}$ and serves as a starting point at the next iteration of the algorithm.
The lemma follows by induction.
\end{proof}

The result of Theorem \ref{thm:subseq_convergence_DRS}, established for 
Algorithm \ref{algo:adaptive_DRS}, also holds true for our extended DRS algorithm.

\begin{theo}
\label{theo:cv_extendedDRS}
Suppose that Assumption~\ref{ssmptn:relaxed_assumption} is satisfied.
Let $(\mathscr{L}^{(k)},u^{(k)},v^{(k)},w^{(k)})_{k \in \mathbb{N}}$ denote the sequence generated by Algorithm \ref{algo:adaptive_extended_DRS}.
After finitely many iterations, $\gamma$ is not updated anymore.
Moreover, the following holds:
\begin{enumerate}
\item The residual $(v^{(k)} - w^{(k)})_{k \in \mathbb{N}}$ strongly converges to 0. Moreover, 
\[
\mathrm{\mathbf{min}}_{i \leq k} \norm{v^{(i)} - w^{(i)}} = o\left(\frac{1}{\sqrt{k}}\right).
\]
\item The sequences $(v^{(k)})_{k \in \mathbb{N}}$ and $(w^{(k)})_{k \in \mathbb{N}}$ admit the same sets of strong cluster points.
\item Any strong cluster point $x$ of the sequences $(v^{(k)})_{k \in \mathbb{N}}$ and $(w^{(k)})_{k \in \mathbb{N}}$ is a stationary point of $\phi$, i.e., \( 0 \in \hat{\partial}\phi(w^*) \).
Moreover,
\begin{equation*}
\phi(x)= \phi_{\gamma}^{\mathrm{DR}}(x)
= \lim_{k \to \infty} \phi_{\gamma}^{\mathrm{DR}}(u^{(k)}).
\end{equation*}
\end{enumerate}
\end{theo}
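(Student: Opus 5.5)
The plan is to reduce Theorem~\ref{theo:cv_extendedDRS} to Theorem~\ref{thm:subseq_convergence_adaptive_DRS}, applied to the modified problem $\tilde{\phi}_1 + \phi_2$. Here $\tilde{\phi}_1$ is the function of Lemma~\ref{lemm:phi_1}, built with $\gamma_1$ equal to the initial stepsize of Algorithm~\ref{algo:adaptive_extended_DRS} and with the distance parameter $C$ of the algorithm.

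First I check that $\tilde{\phi}_1 + \phi_2$ satisfies Assumption~\ref{assmptn:regularity_patrinons_infite_dim}.
\begin{itemize}
\item $\tilde{\phi}_1$ is $C^1$ with globally Lipschitz gradient, say $L_{\tilde{\phi}_1}$-smooth.
\item $\tilde{\phi}_1$ is lower bounded by $\min(\inf_H\phi_1,0)$.
\item $\tilde{\phi}_1$ is weakly lsc. This needs a short argument, since the cutoff $\psi(\|x\|^2/R^2)$ is not weakly continuous. Alternatively, I note that only the parts of the proof of Theorem~\ref{thm:subseq_convergence_adaptive_DRS} that use the assumption matter: nonemptiness of prox (Lemma~\ref{lemm:prox_nonempty}, which for an $L$-smooth function and $\gamma<1/L$ follows from strong convexity of the prox objective) and existence of a minimizer.
\item Existence of a minimizer: $\tilde{\phi}_1+\phi_2 = \phi_1+\phi_2$ on $\mathbf{dom}\,\phi_2$, because $\mathbf{dom}\,\phi_2 \subset S \subset \bar B(R)$. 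So the minimizer given by Remark~\ref{rem:existence_sol} serves.
\end{itemize}

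Second, I show that the sequence produced by Algorithm~\ref{algo:adaptive_extended_DRS} is, step for step, a sequence that Algorithm~\ref{algo:adaptive_DRS} could produce on $\tilde{\phi}_1+\phi_2$ (a valid prox selection at each step).
\begin{itemize}
\item By Lemma~\ref{lemm:well_posedness_algo}, every $u_{-1}$ and $u_0$ visited lies in $S$.
\item All stepsizes used are at most $\gamma_1$, so Lemma~\ref{lemm:phi_1} gives $v \in \prox{\gamma\tilde{\phi}_1}(u)$, $\tilde{\phi}_1(v)=\phi_1(v)$, and equal DR envelopes. The augmented Lagrangian values $\mathcal{L}$ therefore coincide.
\item The test $\phi(v_0)>\mathcal L_0$ is unchanged, since $\tilde{\phi}(v_0)=\phi(v_0)$ when $v_0$ lies in $\bar B(R)$, as shown in Lemma~\ref{lemm:phi_1}.
\end{itemize}

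The only discrepancy is the boundedness mechanism in $\DRS_{\mathrm{loc}}$, which may halve $\gamma$ (and rescale $c,L$) at additional moments. I handle this in two steps.
\begin{itemize}
\item \emph{Finiteness.} By Lemma~\ref{lemma:equiv_two_prox_operators}, once $\gamma\le\gamma_0$ the boundedness mechanism never triggers. By Proposition~\ref{fact:property_dre} applied to $\tilde{\phi}_1$, once $\gamma<1/(2L_{\tilde\phi_1})$ and $L\ge L_{\tilde\phi_1}$ the decay mechanism never triggers either. Since each update halves $\gamma$ and doubles $L$, only finitely many updates of either kind can occur, and $\gamma$ freezes after some index $k_0$.
\item \emph{Reduction.} From $k_0$ on, the algorithm is exactly Algorithm~\ref{algo:simpleDRS} (or Algorithm~\ref{algo:adaptive_DRS} with frozen parameters) for $\tilde{\phi}_1+\phi_2$, started at $u^{(k_0)}$. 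The decay inequality \eqref{eq:decay_condition} holds at every step, because the algorithm enforces it.
\end{itemize}

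Third, I transfer the conclusions.
\begin{itemize}
\item \emph{Residual.} Summing the decay inequalities from $k_0$ and using $\inf\tilde{\phi}^{\mathrm{DR}}_\gamma=\inf\tilde\phi>-\infty$ gives $\sum\|v^{(k)}-w^{(k)}\|^2<\infty$. This yields item 1 with the $o(1/\sqrt k)$ rate.
\item \emph{Cluster points.} Item 2 follows directly from item 1.
\item \emph{Stationarity.} Theorem~\ref{thm:subseq_convergence_adaptive_DRS}(3) shows that any strong cluster point $x$ is stationary for $\tilde{\phi}_1+\phi_2$. Since $x\in \mathbf{dom}\,\phi_2 \subset$ the interior of $\bar B(R)$ (after enlarging $R$ slightly), $\tilde{\phi}_1=\phi_1$ near $x$. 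Hence $\hat\partial(\tilde\phi_1+\phi_2)(x)=\hat\partial\phi(x)$ and $x$ is stationary for $\phi$.
\item \emph{Envelope values.} The identity of the DR envelopes on $S$ from Lemma~\ref{lemm:phi_1} gives the value identities.
\end{itemize}

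I expect the main obstacle to be the bookkeeping in the second step: verifying rigorously that the interleaving of the two adaptive mechanisms really reproduces a valid run of the adaptive algorithm on $\tilde{\phi}_1+\phi_2$, and that the parameter update counts remain finite. A secondary obstacle is the weak lower semicontinuity of $\tilde{\phi}_1$, which I would sidestep by the alternative argument in the first step.
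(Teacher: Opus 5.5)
Your proposal is correct and follows essentially the paper's argument: once $\gamma\le\gamma_0$ the boundedness mechanism of Algorithm~\ref{algo:drs_init} never fires, so Algorithm~\ref{algo:adaptive_extended_DRS} becomes the adaptive DRS run on $\tilde{\phi}_1+\phi_2$ with $\tilde{\phi}_1$ from Lemma~\ref{lemm:phi_1}, and the conclusions transfer to $\phi$ because $\phi_1$ and $\tilde{\phi}_1$, their gradients, and the two envelopes coincide around $\overline{\mathbf{dom}\,\phi_2}$. Your extra care is a genuine refinement, since the cutoff $\tilde{\phi}_1$ is in general not weakly lsc in infinite dimensions (e.g.\ $\phi_1\equiv 1$ and $2Re_k\rightharpoonup 0$ for orthonormal $e_k$), a point the paper passes over; one small correction is that in the residual step the lower bound on $\mathcal{L}^{(k)}$ should come from the algorithm's acceptance test or from the boundedness of the iterates, because $\inf\tilde{\phi}^{\mathrm{DR}}_\gamma=\inf\tilde{\phi}$ is only guaranteed for $\gamma<1/L_{\tilde{\phi}_1}$, which the frozen stepsize need not satisfy.
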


\begin{proof}
Let $\gamma_0$ denote the constant obtained in Lemma \ref{lemma:equiv_two_prox_operators}.
If after some iterations of Algorithm \ref{algo:adaptive_extended_DRS} we have $\gamma \leq \gamma_0$, then the condition $\| v-u \| \leq C$ is always satisfied in Algorithm \ref{algo:drs_init}. This means that after finitely many iterations, no reduction of the stepsize is realized to localize the iterates and thus Algorithm \ref{algo:adaptive_extended_DRS} behaves exactly as the adaptive Algorithm \ref{algo:adaptive_DRS}, applied to $\tilde{\phi}= \tilde{\phi}_1 + \phi_2$. Therefore, the result of Theorem \ref{thm:subseq_convergence_DRS} holds true, for $\phi$ replaced by $\tilde{\phi}$. But any accumulation point of the sequence $w^{(k)}$ necessary lies in $\mathbf{dom}(\phi_2)$, where $\phi_1$ and $\tilde{\phi}_1$, $\nabla \phi_1$ and $\nabla \tilde{\phi}_1$, $\phi_{\gamma}^{\text{DR}}$ and $\tilde{\phi}_{\gamma}^{\text{DR}}$ coincide. So the result of the theorem also holds true for $\phi$, which concludes the proof.
\end{proof}

\begin{rema}
    \label{rema:paramter_choice_extended_drs}
    In practice, the adaptive parameter \(C\) in Algorithm~\ref{algo:adaptive_extended_DRS} should be chosen sufficiently large to avoid excessive reductions of the stepsize \(\gamma\).
    Indeed, such reductions tend to slow down the overall convergence of the method, as illustrated with the numerical example in Section~\ref{sec:example}.

    However, if \(C\) is taken too large, \(\nabla \phi_1\) may take very large values in norm on \(S\). In this situation, the decay mechanism can be activated a large number of times, particularly when the control iterates generated by the operator \(\prox{\gamma \phi_1}\) lie far from \(\mathrm{\mathbf{dom}}\,\phi_2\). This may  affect the convergence speed or even the candidate control returned by the algorithm.
\end{rema}

\section{Applying DRS to the optimal control problem}
\label{sec:convergence_analysis}

In this section, we analyze the convergence properties of Algorithm~\ref{algo:adaptive_extended_DRS} when applied to the general optimal control problem of interest, Problem~\ref{pb:problem_optimal_control}. 
Subsection~\ref{sec:ssmptn} specifies the assumptions imposed on 
Problem~\ref{pb:problem_optimal_control}. Under these assumptions, 
Theorem~\ref{theo:cv_extendedDRS} states that any strong cluster point 
of the sequence generated by Algorithm~\ref{algo:adaptive_extended_DRS}is locally optimal. 
Nevertheless, as mentioned in the previous sections, strong (subsequential) convergence of the iterates cannot be guaranteed a priori.
Subsection~\ref{subsec:intermediary_results} establishes several properties of the DRS operator, demonstrating a contraction-type behavior that preserves the time regularity of the candidate control after each iteration. We rely on these results in Subsection~\ref{sec:proof_main_result}, where they enable us to invoke Ascoli’s theorem to deduce the strong convergence of the iterates.

\subsection{ Assumptions and basic properties of the optimal control problem}
\label{sec:ssmptn}

The following assumptions are taken to hold throughout the remainder of the paper. Basic properties of Problem~\ref{pb:problem_optimal_control} that follow from these assumptions are also presented in this subsection.

\begin{assump}
\label{assump:gen_OC}
We assume the following concerning the costs $\ell$ and $g$, and the dynamics $f$:
\begin{itemize}
\item Structure: There are some functions $\ell_0 \colon \R^m \rightarrow \R$, $\ell_1 \colon \R^n \rightarrow \R$, $f_0 \colon \R^n \rightarrow \R^n$, $f_1 \colon \R^n \rightarrow \R^{n \times m}$ such that
\begin{equation*}
\ell(u,x)= \ell_0(u) + \ell_1(x)
\quad \text{and} \quad
f(u,x) = f_0(x) + f_1(x)u.
\end{equation*}
\item Regularity: All functions $\ell_0$, $\ell_1$, $g$, $f_0$, and $f_1$ are differentiable with locally Lipschitz continuous gradient. 
\item Boundedness: $\ell_1$ and $g$ are bounded from below, moreover, there exists $K>0$ such that
\begin{equation*}
\| f_0(x) \| \leq K (1 + \| x \|)
\quad \text{and} \quad
\| f_1(x) \| \leq K(1 + \| x \|),
\forall x \in \R^n.
\end{equation*}
\item Convexity: $\ell_0$ is strongly convex.
\end{itemize}
In addition, we suppose that all costs $\psi_j$, for $j \in \{ 0,\ldots,J-1\}$, are proper, convex, lsc, and have a bounded domain.
\end{assump}

We apply our extended DRS algorithm to Problem \ref{pb:problem_optimal_control} with the following decomposition,
\begin{equation}
\label{eq:splitting_choice}
\left\{
\begin{aligned}
\phi_1(u) &= \int_0^T \ell(t,u(t),x[u](t)) \ddt + g(x[u](T)), \\
\phi_2(u) &= \sum_{j=0}^{J-1} \int_{\tau_j}^{\tau_{j+1}} \psi_j(u(t)) \ddt.
\end{aligned}
\right.
\end{equation}
where $u$ is taken in the Hilbert space $\bfcalH$. Recall that $x[u]$ denotes the state associated with the control $u$, solution to~\eqref{eq:dynamics_intro}, which is well-posed by Assumption~\ref{assump:gen_OC}.

In the sequel, we will denote as $p[u]$ the costate associated with the control $u$ and the state $x[u]$. It is defined as the solution of
\begin{equation}
\label{eq:dynamics_adjoint_state}
-\dot{p}(t) = \nabla \ell_1\left( x(t) \right) + \ddD f_0\left( x(t) \right)^\top p(t) + \sum_{i=1}^n u_i(t) \ddD f_1^i\left( x(t) \right)^\top p(t), \quad
p(T) = \nabla \varphi\left( x(T) \right),
\end{equation}
where $f_1^i$ refers to the i-th column of $f_1$. 
The following lemma provides us with an elementary and useful estimate.

\begin{lemm}
\label{lemma:xp_holder}
Let $N >0$. There exists a constant $M>0$ such that for any $u \in \mathcal{H}$, if $\| u \|_2 \leq N$, then $\| x[u] \|_\infty \leq M$, $\| p[u] \|_\infty \leq M$. Moreover, both $x[u]$ and $p[u]$ are $\nicefrac{1}{2}$-H\"older continuous with modulus $M$.
\end{lemm}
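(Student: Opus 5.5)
The plan is to prove the estimates for the state first, then for the costate, and in each case to deduce the H\"older bound from the $L^\infty$ bound via Cauchy--Schwarz. Throughout, fix $u$ with $\|u\|_2\le N$. Since $T$ is finite, $\|u\|_{L^1}\le \sqrt{T}\,N$.

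\emph{State bound.} Write $x=x[u]$. For $t\in[0,T]$, the structure and growth conditions of Assumption~\ref{assump:gen_OC} give
\[
\|x(t)\|\le \|x_0\| + \int_0^t K(1+\|x(s)\|)(1+\|u(s)\|)\,\dds .
\]
The weight $K(1+\|u(s)\|)$ is integrable, with integral at most $K(T+\sqrt{T}N)$. Gronwall's lemma in integral form therefore yields
\[
\|x\|_\infty \le \big(\|x_0\| + K(T+\sqrt{T}N)\big)\exp\big(K(T+\sqrt{T}N)\big) =: M_x ,
\]
which depends only on $N$. The same estimate also justifies global existence of the Carath\'eodory solution, since local solutions cannot blow up.

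\emph{State H\"older bound.} For $t_1<t_2$ we have
\[
\|x(t_2)-x(t_1)\| \le \int_{t_1}^{t_2} K(1+M_x)(1+\|u\|)\,\dds .
\]
By Cauchy--Schwarz, $\int_{t_1}^{t_2}(1+\|u\|)\,\dds \le |t_2-t_1|^{1/2}\,\|1+\|u\|\|_{L^2} \le |t_2-t_1|^{1/2}(\sqrt{T}+N)$. This gives global $\nicefrac{1}{2}$-H\"older continuity with a modulus depending only on $N$.

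\emph{Costate bound.} Since $x$ takes values in the compact ball $\bar B(M_x)$ and the data are $C^1$ with locally Lipschitz gradients, the quantities $\nabla\ell_1(x(t))$, $\ddD f_0(x(t))$, $\ddD f_1^i(x(t))$ and the terminal value $p(T)=\nabla g(x(T))$ are all bounded by a constant $K_1$ depending only on $M_x$. (The terminal function written $\varphi$ in \eqref{eq:dynamics_adjoint_state} is meant to be $g$.) The costate equation is linear in $p$, with coefficient $\|\ddD f_0(x)+\sum_i u_i \ddD f_1^i(x)\|\le K_1(1+\|u(t)\|)$, again an integrable weight. Applying backward Gronwall from $T$ gives
\[
\|p\|_\infty \le \big(K_1 + K_1 T\big)\exp\big(K_1(T+\sqrt{T}N)\big) =: M_p .
\]

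\emph{Costate H\"older bound.} As for the state, we have
\[
\|p(t_2)-p(t_1)\|\le \int_{t_1}^{t_2} \big(K_1 + K_1 M_p (1+\|u\|)\big)\,\dds ,
\]
and Cauchy--Schwarz again bounds this by a constant times $|t_2-t_1|^{1/2}$. Finally, take $M$ to be the maximum of the four constants obtained.

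The main subtlety is that $u$ is only in $L^2$, not $L^\infty$. The growth terms $f_1(x)u$ and $u_i \ddD f_1^i(x)^\top p$ must therefore be handled through integrable Gronwall weights rather than uniform Lipschitz bounds. This is exactly why the H\"older exponent is $\nicefrac{1}{2}$ rather than $1$. The linear growth of $f_1$ is essential here: it keeps the nonlinear state estimate closed.
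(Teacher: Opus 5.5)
Your proof is correct and follows essentially the same route as the paper. You use Gronwall's lemma with the integrable weight $K(1+\|u\|)$ for the $L^\infty$ bounds (backward in time for $p$), then Cauchy--Schwarz on the increments for the $\nicefrac{1}{2}$-H\"older modulus, which the paper phrases as uniform bounds on $\|\dot{x}\|_2$ and $\|\dot{p}\|_2$; you also write out the costate estimate that the paper only sketches.
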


\begin{proof}
See Appendix~\ref{appendix:proof_xp_holder}.    
\end{proof}

\begin{lemm}
\label{lemma:verification_assumption_loc_lip}
Under Assumption~\ref{assump:gen_OC}, the functions $\phi_1$ and $\phi_2$ defined by \eqref{eq:splitting_choice} satisfy
Assumption~\ref{ssmptn:relaxed_assumption}. In addition, $\phi_2$ is convex.
\end{lemm}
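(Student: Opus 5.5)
We have to check the three items of Assumption~\ref{ssmptn:relaxed_assumption} for the splitting \eqref{eq:splitting_choice}, and also the convexity of $\phi_2$. We start with $\phi_2$, which is the easier part.

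\emph{Properties of $\phi_2$.} Each $\psi_j$ is proper, convex and lsc with bounded domain, say contained in $\bar B(r_j)\subset\R^m$. A function in $\Gamma_0(\R^m)$ with bounded nonempty domain is bounded below, because it is minorized by an affine function that is bounded on a bounded set. Hence the integrand $t\mapsto\psi_j(u(t))$ is measurable and bounded below, and $\phi_2$ is well defined with values in $\bar\R$ and bounded below by $\sum_j(\tau_{j+1}-\tau_j)\inf\psi_j$. Convexity of $\phi_2$ follows from pointwise convexity of the integrand.

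Lower semicontinuity of $\phi_2$ in the strong topology follows from Fatou's lemma. Take $u_k\to u$ in $L^2$; along a subsequence realising the $\liminf$, we have a.e. convergence, and we apply Fatou to $\psi_j(u_k(t))-\inf\psi_j\ge 0$ together with the lsc of $\psi_j$. Being convex and strongly lsc, $\phi_2$ is weakly lsc by Mazur's lemma. To see that $\phi_2$ is proper, pick $a_j\in\mathrm{\mathbf{dom}}\,\psi_j$ and take $u$ piecewise constant equal to $a_j$ on $(\tau_j,\tau_{j+1})$. Finally, if $\phi_2(u)<\infty$ then $u(t)\in\mathrm{\mathbf{dom}}\,\psi_j$ a.e., so $\|u\|_\infty\le\max_j r_j$ and $\|u\|_2\le\sqrt T\max_j r_j$. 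Thus $\mathrm{\mathbf{dom}}\,\phi_2$ is bounded.

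\emph{Properties of $\phi_1$.} Lower boundedness follows from three facts: $\ell_1$ and $g$ are bounded below, and a strongly convex $\ell_0$ is bounded below and even satisfies $\ell_0(u)\ge \frac{\alpha}{4}\|u\|^2-c_0$. For $C^1$ regularity with locally Lipschitz gradient, we use the standard control-to-state calculus. The map $u\mapsto x[u]$ from $\mathcal H$ to $C([0,T];\R^n)$ is well posed and Fréchet differentiable, thanks to the affine-in-control structure, the linear growth of $f_0,f_1$, Gronwall, and Lemma~\ref{lemma:xp_holder}. The adjoint computation then gives
\[
\nabla\phi_1(u)(t)=\nabla\ell_0(u(t))+f_1(x[u](t))^\top p[u](t).
\]
We take $\ddD f_1$ and the final cost as in \eqref{eq:dynamics_adjoint_state}, reading $\varphi$ there as $g$.

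Local Lipschitz continuity of $\nabla\phi_1$ on a ball $\|u\|_2\le N$ is obtained from Lemma~\ref{lemma:xp_holder} in three steps.
\begin{enumerate}
\item Since $x[u]$ and $p[u]$ are bounded by $M$, Gronwall applied to the state equation gives $\|x[u]-x[u']\|_\infty\le C_N\|u-u'\|_2$. Here the local Lipschitz constants of $f_0,f_1$ on $\bar B(M)$ are used, and the term $f_1(x)u$ is handled by Cauchy--Schwarz.
\item The same argument applied to the adjoint equation, which is linear in $p$ with coefficients involving $u$, gives $\|p[u]-p[u']\|_\infty\le C_N\|u-u'\|_2$.
\item The term $f_1(x)^\top p$ is then Lipschitz in $L^\infty$, hence in $L^2$.
\end{enumerate}

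The main obstacle is the term $\nabla\ell_0(u(t))$. Local Lipschitz continuity of $\nabla\ell_0$ on $\R^m$ does not give Lipschitz continuity of the Nemytskii operator on $L^2$-balls, because $u$ is unbounded pointwise. I would first try to observe that the assumption should be read as $\nabla\ell_0$ being globally Lipschitz, or that $\ell_0$ is quadratic in the applications. If that fails, I would modify $\ell_0$ outside a large ball, which does not change anything relevant on $\mathrm{\mathbf{dom}}\,\phi_2$. Differentiability of the Nemytskii part and of the remainder in $L^2$ is treated similarly, and the remainder estimates are routine.

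Weak lsc of $\phi_1$ is the remaining point. Let $u_k\rightharpoonup u$ weakly in $L^2$, so the sequence is bounded. Then $x[u_k]$ is equibounded and equi-$\frac12$-Hölder by Lemma~\ref{lemma:xp_holder}, and Ascoli gives a uniformly convergent subsequence. The limit solves the state equation with control $u$, since $f_1(x_k)u_k\rightharpoonup f_1(x)u$ when $f_1(x_k)\to f_1(x)$ uniformly. Uniqueness then yields $x[u_k]\to x[u]$ uniformly along the whole sequence. Consequently $\int\ell_1(x_k)$ and $g(x_k(T))$ converge. The term $\int\ell_0(u_k)$ is convex and continuous, hence weakly lsc. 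Therefore $\phi_1$ is weakly lsc.
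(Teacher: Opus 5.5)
Your proposal follows essentially the same route as the paper's proof: Ascoli plus identification of the limit state for weak lsc, a strong-convexity lower bound, the adjoint gradient formula with Gronwall estimates on state and costate, and lsc plus convexity for $\phi_2$ (your Fatou and Mazur arguments only replace the paper's citations), and it is correct under your first reading of the $\nabla\ell_0$ issue. That reading is exactly what the paper does, since its proof simply asserts that ``$\nabla\ell_0$ is assumed Lipschitz continuous''; this is also implicitly needed for $\phi_1$ to be finite on $\mathcal{H}$ and for the term $\int_0^T\langle\nabla\ell_0(\bar u),u_k-\bar u\rangle\,\mathrm{d}t$ in its weak-lsc step to make sense, whereas your truncation fallback would not prove the lemma as stated, because modifying $\ell_0$ outside a ball changes $\phi_1$ itself, and with a merely locally Lipschitz $\nabla\ell_0$ (e.g.\ a quartic term) the claim genuinely fails.
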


\begin{proof}
See Appendix~\ref{appendix:proof_verification_assumption_loc_lip}.   
\end{proof}

Let us briefly discuss the computation of the prox operators of $\phi_1$ and $\phi_2$. First, we have $v \in \prox{\gamma \phi_1}(u)$ if and only if $v$ solves the problem
\begin{equation}
\label{eq:problem_prox_phi1}
\inf_{y \in \bfcalH} \ \int_0^T \ell(t,y(t),x[y](t)) + \frac{1}{2\gamma} \norm{y(t) - u(t)}^2 \ddt + \varphi(x[y](T)),
\end{equation}
which is a smooth optimal control problem, numerically tractable with standard methods. Next we have that $v \in \prox{\gamma \phi_2}(u)$ if and only if
\begin{equation}
\label{eq:prox_phi2}
v(t)= \prox{\psi_j}(u(t)),
\quad
\forall j=0,\ldots,J-1,\, \text{for a.e. $t \in (\tau_j,\tau_{j+1})$}.
\end{equation}
This is an immediate consequence of the separability of $\phi_2$ with respect to time.
Since $\phi_1$ and $\phi_2$ satisfy Assumption \ref{assump:gen_OC}, the optimal control from Problem \ref{pb:problem_optimal_control} has a solution, as explained in Remark \ref{rem:existence_sol}. The next lemma is concerned with the optimality conditions.

\begin{lemm}[A necessary condition for local optimality]
\label{lemma:pmp_main_problem}
Under Assumption \ref{assump:gen_OC}, a necessary condition for $u \in \bfcalH$ to be a local minimum for Problem~\ref{pb:problem_optimal_control} is
\begin{equation}
\label{eq:characterization_stationnary_points_phi}
-H'_u\left(x[u](t), u(t), p[u](t)\right) \in \partial \psi_j \left(u(t)\right), \quad \forall j=0,\ldots,J-1, \ \text{for a.e. } t \in (\tau_j,\tau_{j+1}),
\end{equation}
where \( H : \mathbb{R}^n \times \mathbb{R}^m \times \mathbb{R}^m \to \mathbb{R} \) is defined by $H(x, u, p) := \ell_0(u) + \ell_1(x) + p^\top \left( f_0(x) + f_1(x)u \right)$, where $H'_u$ denotes the gradient of $H$ with respect to $u$.
\end{lemm}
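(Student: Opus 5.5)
Since $\phi_1$ is differentiable and $\phi_2$ is convex with bounded domain, I will show that a local minimizer $u$ satisfies $-\nabla\phi_1(u) \in \partial\phi_2(u)$. Then I will identify both sides pointwise in time. The first step is the stationarity condition. Take any $v \in \mathbf{dom}\,\phi_2$ and $s\in(0,1]$; by convexity of the domain, $u+s(v-u)\in\mathbf{dom}\,\phi_2$. Local optimality gives, for $s$ small,
\[
\phi_1(u+s(v-u))-\phi_1(u) + \phi_2(u+s(v-u))-\phi_2(u)\ge 0 .
\]
By convexity, $\phi_2(u+s(v-u))-\phi_2(u)\le s(\phi_2(v)-\phi_2(u))$. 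Dividing by $s$ and letting $s\to0$ with Fréchet differentiability of $\phi_1$ (Lemma~\ref{lemma:verification_assumption_loc_lip}) yields $\langle\nabla\phi_1(u),v-u\rangle+\phi_2(v)-\phi_2(u)\ge0$ for all $v$. This is exactly $-\nabla\phi_1(u)\in\partial\phi_2(u)$.

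The second step is to compute the gradient: $\nabla\phi_1(u)(t)=H'_u(x[u](t),u(t),p[u](t))=\nabla\ell_0(u(t))+f_1(x[u](t))^\top p[u](t)$. I will do this by the standard adjoint argument. Linearize the state equation in a direction $h\in\bfcalH$, giving $\dot z=(\ddD f_0(x)+\sum_i u_i\ddD f_1^i(x))z+f_1(x)h$ with $z(0)=0$. Then integrate $\frac{d}{dt}\langle p,z\rangle$ using \eqref{eq:dynamics_adjoint_state} (with $\varphi$ read as $g$), which gives
\[
D\phi_1(u)h=\int_0^T\big(\langle\nabla\ell_0(u),h\rangle+\langle f_1(x)^\top p,h\rangle\big)\ddt .
\]
This is presumably already carried out in the proof of Lemma~\ref{lemma:verification_assumption_loc_lip} in the appendix; I would invoke it there, using Lemma~\ref{lemma:xp_holder} for the boundedness of $x$ and $p$ that makes the integrals well defined.

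The third step, which I expect to be the main obstacle, is the pointwise identification of $\partial\phi_2(u)$ in $L^2$. I need: if $q\in\partial\phi_2(u)$, then $q(t)\in\partial\psi_j(u(t))$ for a.e. $t\in(\tau_j,\tau_{j+1})$. Fix $j$, a point $y\in\mathbf{dom}\,\psi_j$, and a measurable set $E\subset(\tau_j,\tau_{j+1})$. Test with $v=u$ outside $E$ and $v=y$ on $E$; this $v$ is in $\bfcalH$ because the domain is bounded and $u\in\mathbf{dom}\,\phi_2$. The subgradient inequality gives
\[
\int_E\big(\psi_j(y)-\psi_j(u(t))-\langle q(t),y-u(t)\rangle\big)\ddt\ge0 .
\]
Since $E$ is arbitrary, the integrand is nonnegative a.e., for each fixed $y$. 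Taking $y$ in a countable dense subset $D_j$ of $\mathbf{dom}\,\psi_j$ gives a common null set outside which the inequality holds for all $y\in D_j$.

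Passing from $D_j$ to all of $\mathbf{dom}\,\psi_j$ needs care, because $\psi_j$ is only lsc, not continuous. I would handle it by choosing $D_j$ dense in the epigraph sense: take a countable set of points $y$ such that every $(y',\psi_j(y'))$ with $y'\in\mathbf{dom}\,\psi_j$ is a limit of $(y_n,\psi_j(y_n))$. This is possible since the graph of $\psi_j$ restricted to its domain is a separable metric space. With this choice, the inequality extends to every $y$, so $q(t)\in\partial\psi_j(u(t))$ a.e.

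Finally, combine the three steps with $q=-\nabla\phi_1(u)$ to obtain \eqref{eq:characterization_stationnary_points_phi}.
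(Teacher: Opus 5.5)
Your proof is correct, but it takes a more elementary route than the paper in two places.

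\emph{Stationarity step.} To obtain $-\nabla\phi_1(u)\in\partial\phi_2(u)$, the paper invokes three results: the sum rule $\hat{\partial}\phi(u)=\nabla\phi_1(u)+\hat{\partial}\phi_2(u)$, the identity $\hat{\partial}\phi_2=\partial\phi_2$ for $\phi_2\in\Gamma_0$, and Fermat's rule $0\in\hat{\partial}\phi(u)$. You instead derive the variational inequality directly, using convexity of $\phi_2$ along segments and a one-sided directional derivative of $\phi_1$. This means G\^ateaux differentiability would suffice.

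\emph{Pointwise description of $\partial\phi_2(u)$.} The paper rewrites the subgradient inequality as the statement that $u$ minimizes $\int_0^T \psi(t,u'(t))-\langle v(t),u'(t)\rangle\,{\rm d}t$. It then asserts that this is a normal integrand, via a rather sketchy countable-family argument, and applies the interchange theorem of Rockafellar--Wets (Thm.~14.60) to get pointwise minimality of $u(t)$. You localize instead, with test functions equal to a constant $y$ on an arbitrary measurable $E$ and to $u$ elsewhere. This gives the pointwise inequality for each fixed $y$. You then extend it to all of $\mathrm{\mathbf{dom}}\,\psi_j$ via a countable subset dense in the graph of $\psi_j$ restricted to its domain, which correctly handles the fact that $\psi_j$ is only lsc. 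The integrability of $\psi_j(u(\cdot))$ on each interval, which you need to split the integrals and to conclude from ``$\int_E\ge 0$ for all $E$'', follows from $\phi_2(u)<\infty$ together with the lower bound of $\psi_j$ on its bounded domain.

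\emph{What each approach buys.} The paper's machinery is more general: it allows arbitrary measurable $t$-dependence and gives the full equality of subdifferentials at once. Your argument is self-contained and exploits that $\psi(t,\cdot)=\psi_j$ is constant on each $(\tau_j,\tau_{j+1})$, so that a single countable set serves all $t$. It also proves only the inclusion that the necessary condition actually requires.

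Both arguments take the gradient formula $\nabla\phi_1(u)=H'_u(x[u],u,p[u])$ from the appendix proof of Lemma~\ref{lemma:verification_assumption_loc_lip}. Both also tacitly assume that the local minimizer lies in $\mathrm{\mathbf{dom}}\,\phi$.
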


\begin{proof}
    See Appendix~\ref{appendix:proof_pmp_main_problem}.
\end{proof}

\subsection{ Regularity results}
\label{subsec:intermediary_results}

The assumptions made in the previous subsection allow us to derive regularity results for the proximal operations related to the DRS scheme. Specifically, our choice of dynamics enable reformulation of the subproblems involved in computing $\prox{\gamma \phi_1}$ as evaluations of $\prox{\gamma \ell_0 }$ at quantities derived from Pontryagin’s maximum principle. The regularity and strongly convex assumptions made on $\ell_0$ ensure a contraction-like behavior of the DRS operator, preserving both the modulus of continuity and the uniform-in-time norm of candidate control after one iteration.

The following lemma shows that a DRS update can be expressed using the reflected operators $R_{\gamma \phi_1}$ and $R_{\gamma \phi_2}$, as defined in \eqref{eq:def_reflected}. Note that the result is classical in the context of convex functions.

\begin{lemm}
\label{lemm:drs_reflected}
Let $u \in H$, let $v \in \prox{\gamma \phi_1}(u)$, let $\tilde{v}= 2v-u$, let $w \in \prox{\gamma \phi_2}(\tilde{v})$, and let $u^+ = u + w-v$. Then,
\begin{equation*}
u^+ \in \frac{1}{2}\Big(
\mathrm{Id} + R_{\gamma \phi_2} R_{\gamma \phi_1}\Big)(u).
\end{equation*}
Concretely, this means that
\begin{equation*}
\tilde{v} \in R_{\gamma \phi_1}(u)
\quad
\text{and}
\quad
2u^+ -  u
\in R_{\gamma \phi_2}(\tilde{v}).
\end{equation*}
\end{lemm}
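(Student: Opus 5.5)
This is a direct verification from the definition of the reflected operator \eqref{eq:def_reflected}; no convexity is needed, since $R_{\gamma\varphi}$ is treated as a set-valued map. The plan has two steps.

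\textbf{First inclusion.} Since $v \in \prox{\gamma \phi_1}(u)$, the definition $R_{\gamma \phi_1}(u) = \{2y - u \mid y \in \prox{\gamma \phi_1}(u)\}$ applied with $y = v$ gives $\tilde{v} = 2v - u \in R_{\gamma \phi_1}(u)$.

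\textbf{Second inclusion.} Since $w \in \prox{\gamma\phi_2}(\tilde v)$, the same definition applied at the point $\tilde v$ gives $2w - \tilde v \in R_{\gamma\phi_2}(\tilde v)$. It remains to identify $2w-\tilde v$ with $2u^+-u$:
\begin{equation*}
2u^+ - u = 2(u + w - v) - u = u + 2w - 2v = 2w - (2v - u) = 2w - \tilde{v}.
\end{equation*}
Hence $2u^+ - u \in R_{\gamma\phi_2}(\tilde v)$.

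\textbf{Conclusion.} Composition of set-valued maps is understood as $R_{\gamma\phi_2}R_{\gamma\phi_1}(u) = \bigcup_{z \in R_{\gamma\phi_1}(u)} R_{\gamma\phi_2}(z)$. Taking $z = \tilde v$, the two inclusions give $2u^+ - u \in R_{\gamma\phi_2}R_{\gamma\phi_1}(u)$. Therefore
\begin{equation*}
u^+ = \tfrac12\big(u + (2u^+ - u)\big) \in \tfrac12\big(\mathrm{Id} + R_{\gamma\phi_2}R_{\gamma\phi_1}\big)(u),
\end{equation*}
where $\mathrm{Id} + A$ for a set-valued $A$ means $\{u + a \mid a \in A(u)\}$.

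There is no real obstacle. The only care needed is to fix this set-valued reading of composition and of $\mathrm{Id}+A$, so that the "concrete" formulation in the statement is literally equivalent to the operator inclusion.
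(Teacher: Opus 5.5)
Your proof is correct and follows the paper's argument exactly: the first inclusion comes straight from the definition of $R_{\gamma\phi_1}$, and the second from the identity $2u^+ - u = 2w - \tilde v \in R_{\gamma\phi_2}(\tilde v)$. Your explicit set-valued reading of the composition and of $\mathrm{Id} + A$ makes precise a point the paper leaves implicit, and it does no harm.
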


\begin{proof}
The fact that $\tilde{v} \in R_{\gamma \phi_1}(u)$ is an immediate consequence of the definition of the reflected operator. Then, it holds that
\begin{equation*}
2 u^+ - u
=  u + 2w -2v
= 2w- \tilde{v}
\in R_{\gamma \phi_2}(\tilde{v}),
\end{equation*}
as announced.
\end{proof}

\begin{lemm}
\label{lemma:lemma_operator_R}
Let $h \in \Gamma_0(H)$ and let $\gamma >0$. Then $R_{\gamma h}$ is $1$-Lipschitz continuous. Moreover, if $h$ is $L$-smooth and strongly convex, there exists $\theta \in (0,1)$ such that $R_{\gamma h}$ is $\theta$-Lipschitz.
\end{lemm}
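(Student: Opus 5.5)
We prove the two claims in turn, using standard facts about proximal operators of functions in $\Gamma_0(H)$. For the first claim we use that $\prox{\gamma h}$ is firmly nonexpansive. For $x,x'\in H$ write $p=\prox{\gamma h}(x)$ and $p'=\prox{\gamma h}(x')$. Firm nonexpansiveness means $\norm{p-p'}^2\le\sclr{p-p',x-x'}$, and it follows from the monotonicity of $\partial h$: we have $(x-p)/\gamma\in\partial h(p)$ and $(x'-p')/\gamma\in\partial h(p')$, so $\sclr{(x-p)-(x'-p'),p-p'}\ge 0$. Expanding the square gives
\begin{equation*}
\norm{R_{\gamma h}(x)-R_{\gamma h}(x')}^2=4\norm{p-p'}^2-4\sclr{p-p',x-x'}+\norm{x-x'}^2\le\norm{x-x'}^2,
\end{equation*}
which is the $1$-Lipschitz property.

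For the second claim, let $\alpha>0$ be the strong convexity modulus and $L$ the Lipschitz constant of $\nabla h$. Set $\Delta=x-x'$ and $\delta=p-p'$. We exploit the optimality condition $x=p+\gamma\nabla h(p)$, so that
\begin{equation*}
\Delta=\delta+\gamma g, \qquad g:=\nabla h(p)-\nabla h(p').
\end{equation*}
The gradient of a convex $L$-smooth function is cocoercive (Baillon--Haddad), and strong convexity gives strong monotonicity:
\begin{equation*}
\sclr{g,\delta}\ge \tfrac1L\norm{g}^2, \qquad \sclr{g,\delta}\ge\alpha\norm{\delta}^2 .
\end{equation*}
The reflected difference is $R_{\gamma h}(x)-R_{\gamma h}(x')=2\delta-\Delta=\delta-\gamma g$. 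Hence
\begin{equation*}
\norm{\Delta}^2-\norm{\delta-\gamma g}^2=4\gamma\sclr{\delta,g}.
\end{equation*}
It therefore suffices to show $4\gamma\sclr{\delta,g}\ge(1-\theta^2)\norm{\Delta}^2$ for some $\theta<1$. This is the main step.

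To get it, we bound $\norm{\Delta}^2\le 2\norm{\delta}^2+2\gamma^2\norm{g}^2$. The two inequalities above give $\norm{\delta}^2\le\sclr{g,\delta}/\alpha$ and $\norm{g}^2\le L\sclr{g,\delta}$. Combining,
\begin{equation*}
\norm{\Delta}^2\le\left(\tfrac2\alpha+2\gamma^2L\right)\sclr{g,\delta}.
\end{equation*}
Consequently
\begin{equation*}
\norm{R_{\gamma h}(x)-R_{\gamma h}(x')}^2\le\left(1-\kappa\right)\norm{\Delta}^2, \qquad \kappa:=\frac{4\gamma}{2/\alpha+2\gamma^2L}=\frac{2\gamma\alpha}{1+\gamma^2\alpha L}.
\end{equation*}
Here $\kappa>0$. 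Moreover $\kappa\le1$, because $1+\gamma^2\alpha L\ge 2\gamma\sqrt{\alpha L}\ge 2\gamma\alpha$ by AM--GM and $\alpha\le L$. If $\kappa<1$, we take $\theta=\sqrt{1-\kappa}\in(0,1)$.

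The only remaining issue is the edge case $\kappa=1$, where this choice gives $\theta=0$ instead of a value in $(0,1)$. There we take any $\theta\in(0,1)$: the map is then $0$-Lipschitz, and a fortiori $\theta$-Lipschitz. This proves the lemma. Note that $\theta$ depends only on $\gamma$, $\alpha$ and $L$, which is what the later regularity arguments need.
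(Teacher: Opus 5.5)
Your proof is correct. The paper gives no argument of its own: it cites \cite[Corollary 23.11(ii)]{bauschke2017convex} for nonexpansiveness and \cite[Theorem 1]{giselsson2016linear} for the contraction.

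Your first part (firm nonexpansiveness of $\prox{\gamma h}$ from the monotonicity of $\partial h$, then expanding $\|2\delta-\Delta\|^2$) is the standard argument behind the Bauschke--Combettes result.

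For the second part, your route is genuinely different. The cited theorem gives the sharp factor $\max\bigl(\frac{\gamma L-1}{\gamma L+1},\frac{1-\gamma\alpha}{1+\gamma\alpha}\bigr)$, which is attained by quadratics. You use cocoercivity and strong monotonicity separately, followed by the crude bound $\|\delta+\gamma g\|^2\le 2\|\delta\|^2+2\gamma^2\|g\|^2$, and this only yields $\theta^2=1-\frac{2\gamma\alpha}{1+\gamma^2\alpha L}$. For instance, when $\alpha=L$ you get $|1-\gamma L|/\sqrt{1+\gamma^2L^2}$ instead of the exact $|1-\gamma L|/(1+\gamma L)$. The loss is harmless here: Lemmas~\ref{lemma:regularity_R1} and~\ref{prop:piecewise_regularity} only need some $\theta<1$ independent of the control. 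In exchange, your argument is elementary, self-contained, valid verbatim in any Hilbert space, and makes the dependence of $\theta$ on $(\gamma,\alpha,L)$ explicit.

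That explicit formula also shows $\theta\to1$ as $L\to\infty$, and global smoothness is indeed essential. For $h(s)=\frac{s^2}{2}+\frac{s^4}{4}$ on $\R$, the derivative of $R_{\gamma h}$ tends to $-1$ at infinity, so no $\theta<1$ works. This matters because the paper later applies the lemma to $\ell_0$, whose gradient Assumption~\ref{assump:gen_OC} only requires to be locally Lipschitz.
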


For the first claim of the lemma, see \cite[Corollary 23.11(ii)]{bauschke2017convex}. For the second claim, see \cite[Theorem 1]{giselsson2016linear}.
The following lemma shows that the reflected operator $R_{\gamma \phi_1}$ preserves the piecewise H\"older regularity of the controls.

\begin{lemm}
\label{lemma:regularity_R1}
Let $u \in \bfcalH$ be piecewise $\nicefrac{1}{2}$-H\"older continuous with modulus $C_1 >0$. Assume that $\| u \|_2 \leq N$ for some constant $N >0$. Let $\gamma > 0$.
Then, there exist $\theta \in (0,1)$ and $C_2 > 0$, independent of $u$, and $C_1$, such that any $\tilde{v} \in R_{\gamma \phi_1} (u) $ is also piecewise $\nicefrac{1}{2}$-Hölder continuous with modulus $\theta C_1 + C_2$.
\end{lemm}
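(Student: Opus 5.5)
The plan is to characterize $v\in\prox{\gamma\phi_1}(u)$ pointwise via Pontryagin's principle. The prox subproblem \eqref{eq:problem_prox_phi1} is a smooth optimal control problem with running cost $\ell_0(y)+\ell_1(x)+\frac{1}{2\gamma}\|y-u\|^2$. Its first-order condition (the same computation as in Lemma \ref{lemma:pmp_main_problem}, with $\psi_j\equiv 0$ and the extra quadratic term) gives, for a.e.\ $t$,
\begin{equation*}
\nabla\ell_0(v(t)) + f_1(x(t))^\top p(t) + \tfrac{1}{\gamma}(v(t)-u(t)) = 0,
\end{equation*}
with $x=x[v]$ and $p=p[v]$. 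Equivalently,
\begin{equation*}
v(t)=\prox{\gamma\ell_0}\big(u(t)-\gamma f_1(x(t))^\top p(t)\big).
\end{equation*}
Hence
\begin{equation*}
\tilde v(t)=2v(t)-u(t)=R_{\gamma\ell_0}\big(u(t)-\gamma q(t)\big)-\gamma q(t), \qquad q:=f_1(x)^\top p .
\end{equation*}
This is the key reformulation. It reduces the reflected operator of the nonconvex $\phi_1$ to the reflected operator of the strongly convex, finite-dimensional $\ell_0$, evaluated at a perturbation of $u$.

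Next I would bound $q$ uniformly. By \eqref{eq:prox_estimate}, $\|v-u\|_2^2\le 2\gamma(\phi_1(u)-\inf\phi_1)$. Since $\|u\|_2\le N$ and $\phi_1$ is bounded on bounded sets (Lemma \ref{lemma:verification_assumption_loc_lip}), $\|v\|_2\le N'$ with $N'$ depending only on $N$ and $\gamma$. Lemma \ref{lemma:xp_holder} applied with $N'$ then gives $\|x\|_\infty,\|p\|_\infty\le M$, and shows that $x,p$ are $\nicefrac12$-Hölder with modulus $M$. Because $f_1$ is locally Lipschitz, $q$ is bounded and $\nicefrac12$-Hölder on $[0,T]$, with a modulus $M_q$ depending only on $N,\gamma$.

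For the Hölder estimate, fix $j$ and a.e.\ $t_1,t_2\in(\tau_j,\tau_{j+1})$. I would apply Lemma \ref{lemma:lemma_operator_R} to $h=\ell_0$: $R_{\gamma\ell_0}$ is $\theta$-Lipschitz for some $\theta\in(0,1)$. This yields
\begin{equation*}
\|\tilde v(t_2)-\tilde v(t_1)\|\le \theta\big(\|u(t_2)-u(t_1)\|+\gamma\|q(t_2)-q(t_1)\|\big)+\gamma\|q(t_2)-q(t_1)\| \le \big(\theta C_1+(1+\theta)\gamma M_q\big)|t_2-t_1|^{1/2},
\end{equation*}
so $C_2:=(1+\theta)\gamma M_q$, independent of $u$ and $C_1$.

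The main obstacle is that Lemma \ref{lemma:lemma_operator_R} requires $\ell_0$ to be globally $L$-smooth, whereas the assumption only gives a locally Lipschitz gradient. I would handle this by localizing. First derive an a priori $L^\infty$ bound on the arguments $u(t)-\gamma q(t)$; a piecewise Hölder function with $\|u\|_2\le N$ is bounded in $L^\infty$ by a quantity involving $C_1$. I would then replace $\ell_0$ outside a large ball by a globally smooth, strongly convex extension (e.g.\ a Moreau-type or quadratic modification), noting that $\prox{\gamma\ell_0}$ maps bounded sets to bounded sets. The difficulty is keeping $\theta$ independent of $C_1$. If the $L^\infty$ bound depends on $C_1$, I would instead fall back on the strong convexity alone: $R_{\gamma\ell_0}$ is at least nonexpansive, and contractive with a constant determined by the local smoothness. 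I would check whether the structure (one-dimensional in time, pieces of fixed length) allows an argument where $t_1,t_2$ are close, so that only local Lipschitz constants near the bounded range of $q$ matter.

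A secondary point is rigor of the PMP step. A minimizer $v$ of the prox problem exists by Lemma \ref{lemm:prox_nonempty}, and the stationarity condition follows by differentiating $\phi_1$ in $L^2$ with the adjoint \eqref{eq:dynamics_adjoint_state}, exactly as in the appendix proof of Lemma \ref{lemma:pmp_main_problem}.
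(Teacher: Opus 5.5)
Your main line is the paper's proof. You use the first-order condition of the prox subproblem to write $v(t)=\prox{\gamma\ell_0}(u(t)+\gamma z_v(t))$ with $z_v=-f_1(x[v])^\top p[v]$, so that $\tilde v(t)=R_{\gamma\ell_0}(u(t)+\gamma z_v(t))+\gamma z_v(t)$. The $\theta$-contractivity of $R_{\gamma\ell_0}$ and the uniform H\"older bound on $z_v$ from Lemma~\ref{lemma:xp_holder} then give the modulus $\theta C_1+(1+\theta)\gamma\omega$. Deriving the pointwise condition from $\nabla\phi_1(v)+\gamma^{-1}(v-u)=0$ and \eqref{eq:gradient_expression}, instead of citing Pontryagin's principle, is equally valid.

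The ``main obstacle'' paragraph is where you go astray, because the localization fallback cannot work. The arguments of $R_{\gamma\ell_0}$ are $u(t)+\gamma z_v(t)$, not just values of $z_v$. A function with $\|u\|_2\le N$ and modulus $C_1$ can take values of order $\sqrt{NC_1}$, for example a narrow spike $\max(H-C_1|t-t_0|^{1/2},0)$. So any contraction factor obtained by localizing $\ell_0$ depends on $C_1$, and it tends to $1$ if the local Lipschitz constants of $\nabla\ell_0$ blow up. That destroys the fixed $\theta$ that the induction in Lemma~\ref{prop:piecewise_regularity} needs.

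The paper does not localize. It applies Lemma~\ref{lemma:lemma_operator_R} directly to $h=\ell_0$, treating $\nabla\ell_0$ as globally Lipschitz; this is written explicitly in Part~3 of the appendix proof of Lemma~\ref{lemma:verification_assumption_loc_lip}. This is not an avoidable extra hypothesis. Test the superposition operator $u\mapsto\nabla\ell_0(u(\cdot))$ on $a\mathbf{1}_E$ and $b\mathbf{1}_E$ with $|E|$ small: its Lipschitz constant on the unit ball of $\bfcalH$ is at least $\|\nabla\ell_0(a)-\nabla\ell_0(b)\|/\|a-b\|$ for arbitrary $a,b$. Hence local Lipschitz continuity of $\nabla\phi_1$ on $\bfcalH$ already forces $\nabla\ell_0$ to be globally Lipschitz. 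The whole framework relies on that property, including your own appeal to boundedness of $\phi_1$ on bounded sets. So invoke global $L$-smoothness of $\ell_0$ and drop the localization; your proof is then complete.
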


\begin{proof}
Let $v \in \prox{\gamma \phi_1}(u)$.
Proceeding as in the proof of Lemma \ref{lemm:phi_1}, we can construct a bound of $\| v \|_2$ that is independent of $u$ and $C_1$. The control $v$ is characterized as a solution to the optimal control problem \eqref{eq:problem_prox_phi1}. Thanks to Pontryagin's principle (see, for instance, \cite{vinter2010optimal}), we have that
\begin{equation}
\label{eq:main_a}
\nabla \ell_0 \left( v(t) \right) + \frac{1}{\gamma} \left( v(t) - u(t) \right) + f_1\left( x[v](t) \right)^\top p[v](t) = 0.
\end{equation}
Reordering, we obtain that
\begin{equation}
\left( \mathrm{Id} + \gamma \nabla \ell_0 \right) \left( v(t) \right) = u(t) + \gamma z_v(t),
\end{equation}
where
\begin{equation}
    \label{eq:def_residual}
    z_v(t):= - f_1\left( x[v](t) \right)^\top p[v](t).
\end{equation}
Therefore, as $\ell_0$ is differentiable and (strongly) convex,
\begin{equation}
\label{eq:u_rewritten}
v(t) = \prox{\gamma \ell_0} \left( u(t) + \gamma z_v (t) \right).
\end{equation}
We deduce from Lemma \ref{lemma:xp_holder} that $z_v$ is $\nicefrac{1}{2}$-Hölder continuous with modulus $\omega$, where $\omega$ only depends on $N$. It follows from \eqref{eq:u_rewritten} that
\begin{align}
\tilde{v}(t)
={} & 2v(t)-u(t) \notag \\
={} & 2 \, \prox{\gamma \ell_0} \big(u(t)+\gamma z_v (t) \big) - u(t) \notag \\
={} & R_{\gamma \ell_0} \big( u(t)+ \gamma  z_v (t) \big) + \gamma z_v (t).
\label{eq:r1_rewritten}
\end{align}
Let $\theta \in (0,1)$ be given by Lemma \ref{lemma:lemma_operator_R}.
For all $j=0,\ldots,J-1$ and for almost every $t_1$ and $t_2 \in (\tau_j,\tau_{j+1})$, we have that
\begin{equation*}
\| \tilde{v}(t_2)- \tilde{v}(t_1) \|
\leq \theta \norm{ u(t_2) -u(t_1)} + \gamma (1 + \theta) \norm{ z_v (t_2) -  z_v (t_1)}.
\end{equation*}
Using the regularity of $z_v$ and $u$, we deduce that
\begin{align*}
\norm{(R_1 u) (t_2) - (R_1 u) (t_1)} \leq {} & \theta C_1 |t_2 - t_1|^{\frac{1}{2}} + (1+\theta) \gamma \omega |t_2 - t_1|^{\frac{1}{2}} \\
\leq {} & (\theta C_1 + C_2) |t_2 - t_1|^{\frac{1}{2}},
\end{align*}
for $C_2 = (1+\theta)\gamma \omega$. This proves the lemma.
\end{proof}

To conclude this subsection, we give a regularity result concerning the operator $R_{\gamma \phi_2}$.

\begin{lemm}
\label{lemma:regularity_r2}
Let $u$ and $v \in \bfcalH$ and let $\gamma >0$. For all $j=0,\ldots,J-1$, for almost every $t_1,\, t_2 \in (\tau_j,\tau_{j+1})$, we have
\begin{equation*}
\norm{(R_{\gamma \phi_2} (u))(t_2) - (R_{\gamma \phi_2} (v))(t_1)} \leq \norm{u(t_2) - v(t_1)}.
\end{equation*}
\end{lemm}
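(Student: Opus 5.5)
The plan is to reduce everything to the pointwise (in time) description of $\prox{\gamma \phi_2}$ given in \eqref{eq:prox_phi2}, and then to invoke the nonexpansiveness of the reflected operator of a convex function in finite dimension (Lemma~\ref{lemma:lemma_operator_R}). Since $\phi_2$ is convex and lsc (Lemma~\ref{lemma:verification_assumption_loc_lip}), $\prox{\gamma\phi_2}$ and hence $R_{\gamma\phi_2}$ are single-valued, so the expression $(R_{\gamma\phi_2}(u))(t_2)$ is meaningful as an element of $\bfcalH$ evaluated almost everywhere.

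\textbf{Step 1: pointwise formula.} Fix $j$. By separability of $\phi_2$ in time (the argument behind \eqref{eq:prox_phi2}, applied with parameter $\gamma$), for a.e.\ $t\in(\tau_j,\tau_{j+1})$ one has
\begin{equation*}
(\prox{\gamma\phi_2}(u))(t)=\prox{\gamma\psi_j}(u(t)),
\end{equation*}
and the same holds for $v$. Consequently
\begin{equation*}
(R_{\gamma\phi_2}(u))(t)=2\prox{\gamma\psi_j}(u(t))-u(t)=R_{\gamma\psi_j}(u(t)),
\end{equation*}
and likewise for $v$. If one wants to justify this carefully, minimize $\int_{\tau_j}^{\tau_{j+1}}\psi_j(y(t))+\frac{1}{2\gamma}\|y(t)-u(t)\|^2\ddt$ pointwise. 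The map $t\mapsto\prox{\gamma\psi_j}(u(t))$ is measurable, being the composition of a Lipschitz map with a measurable one. It is square integrable, because it takes values in the bounded set $\mathrm{\mathbf{dom}}\,\psi_j$, or alternatively because $\prox{\gamma\psi_j}$ is $1$-Lipschitz. Pointwise optimality then gives global optimality, and uniqueness follows from strong convexity.

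\textbf{Step 2: conclude.} Let $N_u$ and $N_v$ be the null sets outside which the pointwise formula holds for $u$ and for $v$. For $t_1,t_2\in(\tau_j,\tau_{j+1})\setminus(N_u\cup N_v)$, we have
\begin{equation*}
\norm{(R_{\gamma\phi_2}(u))(t_2)-(R_{\gamma\phi_2}(v))(t_1)}=\norm{R_{\gamma\psi_j}(u(t_2))-R_{\gamma\psi_j}(v(t_1))}\le\norm{u(t_2)-v(t_1)}.
\end{equation*}
The inequality is the first claim of Lemma~\ref{lemma:lemma_operator_R}, applied with $H=\R^m$ and $h=\psi_j\in\Gamma_0(\R^m)$.

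\textbf{Main obstacle.} The only delicate point is the \emph{almost everywhere} bookkeeping. Elements of $\bfcalH$ are equivalence classes, so we must fix representatives of $u$ and $v$ and check that the exceptional sets do not depend on the pair $(t_1,t_2)$. This works here because the exceptional sets are unions of the fixed null sets $N_u$ and $N_v$, so the estimate holds simultaneously for all $t_1,t_2$ outside that union. Everything else is immediate.
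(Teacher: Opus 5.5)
Your proof is correct and follows the paper's argument: on each interval $(\tau_j,\tau_{j+1})$, the time-separable formula for $\prox{\gamma\phi_2}$ reduces $R_{\gamma\phi_2}$ pointwise to $R_{\gamma\psi_j}$, and the nonexpansiveness of $R_{\gamma\psi_j}$ from Lemma~\ref{lemma:lemma_operator_R} gives the inequality. Your justification of the pointwise formula (measurability, integrability, uniqueness) and your handling of the null sets are sound; they make explicit what the paper leaves implicit, including the factor $\gamma$ that the paper's displayed pointwise prox formula omits.
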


\begin{proof}
By \eqref{eq:prox_phi2}
and Lemma \ref{lemma:lemma_operator_R}, we have
\begin{align*}
\| (R_{\gamma \phi_2}(u))(t_2)
- (R_{\gamma \phi_2}(v))(t_1) \|
= {} & \| R_{\gamma \psi_j}(u(t_2))
- R_{\gamma \psi_j}(v(t_1)) \| \\
\leq {} & \| u(t_2)-v(t_1)\|.
\end{align*}
The lemma is proved.
\end{proof}

\subsection{ Convergence of the DRS method}
\label{sec:proof_main_result}

In this subsection, we prove the existence of a strong accumulation point of the sequence $(u^{(k)},v^{(k)},w^{(k)})_{k \in \mathbb{N}}$ generated by the extended DRS algorithm, Algorithm~\ref{algo:adaptive_extended_DRS}, for solving Problem~\ref{pb:problem_optimal_control}. In view of applying Ascoli's theorem, we show that the sequence $(u^{(k)})_{k \in \mathbb{N}}$ is bounded and piecewise $\nicefrac{1}{2}$-H\"older continuous, uniformly in $k$.

From Theorem \ref{theo:cv_extendedDRS}, we know that after some iteration $k_0 \in \mathbb{N}$, the stepsize is not updated anymore. Let us denote as $\gamma$ its final value. If the initialization $u_{\text{init}}$ lies in $C_\tau^{0,\nicefrac{1}{2}}(0,T;\R^n)$, then $u^{(k_0)}$ also lies in $C_\tau^{0,\nicefrac{1}{2}}(0,T;\R^n)$, as can be established by induction, combining Lemmas \ref{lemma:lemma_operator_R}, \ref{lemma:regularity_R1}, and \ref{lemma:regularity_r2}.
In the following, we discard the first $k_0$ elements of the sequence 
$(u^{(k)}, v^{(k)}, w^{(k)})_{k \in \mathbb{N}}$ and reindex the remaining elements starting from $0$.

\begin{lemm}
\label{prop:piecewise_regularity}
The sequences $(u^{(k)})_{k \in \mathbb{N}}$ and $(v^{(k)})_{k \in \mathbb{N}}$ are uniformly bounded and piecewise $\nicefrac{1}{2}$-H\"older continuous.
\end{lemm}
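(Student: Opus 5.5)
We argue by induction on $k$, working in the reindexed sequence where the stepsize $\gamma$ is fixed. The key is an affine recursion for the Hölder modulus of $u^{(k)}$ with contraction factor $\theta<1$, which yields a uniform bound on the moduli.

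\emph{Uniform $L^2$ bounds.} By Lemma~\ref{lemm:well_posedness_algo}, every $u^{(k)}$ lies in $S=\mathrm{\mathbf{dom}}\,\phi_2+\bar B(C)$ and $\|v^{(k)}-u^{(k)}\|_2\le C$. Since $\mathrm{\mathbf{dom}}\,\phi_2$ is bounded (each $\psi_j$ has bounded domain), there is $N>0$ with $\|u^{(k)}\|_2\le N$ and $\|v^{(k)}\|_2\le N$ for all $k$. This $N$ is the constant fed into Lemmas~\ref{lemma:xp_holder} and~\ref{lemma:regularity_R1}, which makes the constants $\theta\in(0,1)$, $C_2$ and $\omega$ appearing there independent of $k$.

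\emph{Recursion on the modulus.} By Lemma~\ref{lemm:drs_reflected}, $u^{(k+1)}=\tfrac12\big(u^{(k)}+\hat u\big)$ with $\hat u\in R_{\gamma\phi_2}(\tilde v^{(k)})$ and $\tilde v^{(k)}=2v^{(k)}-u^{(k)}\in R_{\gamma\phi_1}(u^{(k)})$. Let $M_k$ be a modulus of $u^{(k)}$. Lemma~\ref{lemma:regularity_R1} gives that $\tilde v^{(k)}$ has modulus $\theta M_k+C_2$. Applying Lemma~\ref{lemma:regularity_r2} with $u=v=\tilde v^{(k)}$ shows that $\hat u$ has the same modulus on each $(\tau_j,\tau_{j+1})$. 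Hence
\[
M_{k+1}\le \tfrac12 M_k+\tfrac12(\theta M_k+C_2)=\tfrac{1+\theta}{2}M_k+\tfrac{C_2}{2},
\]
so $M_k\le \max\big(M_0,\,C_2/(1-\theta)\big)$ for all $k$.

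For $v^{(k)}$, formula \eqref{eq:u_rewritten} gives $v^{(k)}(t)=\prox{\gamma\ell_0}(u^{(k)}(t)+\gamma z_{v^{(k)}}(t))$. The map $\prox{\gamma\ell_0}$ is $1$-Lipschitz, and $z_{v^{(k)}}$ has modulus $\omega$ by Lemma~\ref{lemma:xp_holder}. So $v^{(k)}$ has modulus $M_k+\gamma\omega$, which is bounded uniformly in $k$.

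\emph{Uniform $L^\infty$ bounds.} A piecewise $\tfrac12$-Hölder function with modulus $\bar M$ and $L^2$ norm at most $N$ is bounded in $L^\infty$. On each interval $(\tau_j,\tau_{j+1})$, for a.e.\ $t_1$ we have $\|u(t_1)\|\le \|u(t_2)\|+\bar M T^{1/2}$ for a.e.\ $t_2$. Averaging in $t_2$ and applying Cauchy--Schwarz gives
\[
\|u(t_1)\|\le (\tau_{j+1}-\tau_j)^{-1/2}N+\bar M T^{1/2},
\]
and this bound is uniform over the finitely many intervals. Applying it to both $u^{(k)}$ and $v^{(k)}$ gives the uniform boundedness in $L^\infty$.

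\emph{Main obstacle.} The delicate point is that the constants $\theta$ and $C_2$ from Lemma~\ref{lemma:regularity_R1} must not depend on $k$. This requires the uniform $L^2$ bound $N$ on both $u^{(k)}$ and $v^{(k)}$, hence the localization of Lemma~\ref{lemm:well_posedness_algo}. It also requires $\gamma$ to be fixed, which is why we discard the first $k_0$ iterations. The theorem assumes $u_{\text{init}}$, and thus $u^{(0)}$ after reindexing, is piecewise Hölder, which provides the finite starting value $M_0$.
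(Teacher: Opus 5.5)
Your proof is correct. The H\"older part is essentially the paper's Part~1: the same affine recursion with factor $\frac{1+\theta}{2}$, built from Lemmas~\ref{lemm:drs_reflected}, \ref{lemma:regularity_R1} and \ref{lemma:regularity_r2}. The only cosmetic change is that you read off the modulus of $v^{(k)}$ from \eqref{eq:u_rewritten} and the nonexpansiveness of $\prox{\gamma \ell_0}$, whereas the paper writes $v^{(k)}=\frac12(\tilde v^{(k)}+u^{(k)})$.

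The genuine difference is the uniform $L^\infty$ bound. The paper runs a second contraction argument directly on $\|u^{(k)}\|_\infty$. It compares $R_1(u^{(k)})$ with the reference point $R_1(0_{\mathcal H})$ via \eqref{eq:r1_rewritten} and the $\theta$-contractivity of $R_{\gamma\ell_0}$. It then pushes this through $R_2$ with Lemma~\ref{lemma:regularity_r2}, obtaining again an affine recursion for the sup norms.

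You instead observe that the uniform $L^2$ bound from the localization (Lemma~\ref{lemm:well_posedness_algo}), combined with the uniform piecewise H\"older modulus you have just proved, already controls the sup norm on each $(\tau_j,\tau_{j+1})$ by averaging and Cauchy--Schwarz. Your route is shorter and fully explicit. It needs no reference point $0_{\mathcal H}$, no second recursion, and no bookkeeping of which residual $z$ enters \eqref{eq:r1_rewritten} at the comparison point.

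The paper's route, by contrast, gives the sup-norm bound without using the H\"older estimate; it only uses the $L^2$ localization to control $z$. It would therefore also apply to iterates that are merely bounded, not H\"older. For the Arzel\`a--Ascoli argument of Theorem~\ref{thm:main_result}, where both properties are needed anyway, tying the $L^\infty$ bound to the H\"older modulus costs nothing.
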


\begin{proof}
\textit{Part 1: piecewise $\nicefrac{1}{2}$-H\"older continuity.}
\\
For the sake of clarity, we denote as $R_1$ and $R_2$ the mappings $R_{\gamma \phi_1}$ and $R_{\gamma \phi_2}$.
For any $k \in \mathbb{N}$, we set $\tilde{v}^{(k)}= 2v^{(k)}-u^{(k)} \in R_1(u^{(k)}$. We recall that by Lemma \ref{lemm:drs_reflected},
\begin{equation}
\label{eq:drs_update_reg}
u^{(k+1)}
= \frac{1}{2} \Big(
u^{(k)}+ R_2(\tilde{v}^{(k)})
\Big).
\end{equation}
As was explained above, $u^{(0)}$ is piecewise $\nicefrac{1}{2}$-H\"older continuous. Let $C_0$ denote its modulus.
Let $k \in \mathbb{N}$. Assume now that $u^{(k)}$ is piecewise $\nicefrac{1}{2}$-H\"older continuous with modulus $C_k$. Using Lemma~\ref{lemma:regularity_R1}, we deduce that $\tilde{v}^{(k)}$ is piecewise $\nicefrac{1}{2} $-Hölder continuous with modulus $ \theta C_k + C $, where $ \theta $ and $ C $ are independent of $k$.
It easily follows from Lemma \ref{lemma:regularity_r2} that
$R_2(\tilde{v}^{(k)})$ is also piecewise $\nicefrac{1}{2}$-H\"older continuous with modulus $\theta C_k +C$.
Finally we deduce from \eqref{eq:drs_update_reg} that $u^{(k+1)}$ is piecewise $\nicefrac{1}{2}$-H\"older continuous with modulus
$C_{k+1}= \tilde{\theta} C_k + \frac{1}{2}C$,
where $ \tilde{\theta} := \frac{1 + \theta}{2} \in (0,1) $. By a direct induction argument, we obtain that
\begin{equation}
C_{k} = C_0 \tilde{\theta}^k + \frac{C}{2} \sum_{i=0}^{k-1} \tilde{\theta}^i
\leq C_0 \tilde{\theta}^k + \frac{C}{2(1 - \tilde{\theta})} \leq C_0 + \frac{C}{2(1 - \tilde{\theta})},
\end{equation}
establishing the uniform piecewise $\nicefrac{1}{2} $-Hölder continuity of $(u^{(k)})_{k \in \mathbb{N}}$ and $(\tilde{v}^{(k)})_{k \in \mathbb{N}}$.  Since $v^{(k)}= \frac{1}{2}(\tilde{v}^{(k)}+u^{(k)})$, we also have that the sequence $(v^{(k)})_{k \in \mathbb{N}}$ is uniformly piecewise $\nicefrac{1}{2} $-Hölder continuous.
\\
\\
\textit{Part 2: uniform boundedness.}
\\
Let us set $M_k = \norm{u^{(k)}}_\infty := \sup_{t \in [0,T]} \norm{u^{(k)}(t)}$. Let us also denote by $R_0:= 2 \, \prox{\gamma \ell_0} - \mathrm{Id}$.
Then, using~\eqref{eq:r1_rewritten} and Lemma~\ref{lemma:lemma_operator_R}, we have for all $t$ that 
\begin{align*}
\norm{(R_1 (u^{(k)}))(t) - (R_1 (0_\bfcalH))(t)}
&\leq \norm{ R_0 \left( u^{(k)}(t) + \gamma z_{u^{(k)}}(t) \right)  - R_0 \left(z_{0_{\mathbf{\mathcal{H}}}}(t) \right) } \\ 
& \qquad + \gamma \norm{ z_{u^{(k)}}(t) - z_{0_{\bfcalH}}(t) } \\
&\leq \theta \norm{ u^{(k)}(t) + \gamma \left( z_{u^{(k)}}(t) - z_{0_{\bfcalH}}(t) \right) } + \gamma \norm{ { z_{u^{(k)}}(t) - z_{0_{\bfcalH}}(t)}} \\
&\leq \theta \norm{u^{(k)}(t)} + (1 + \theta)\gamma \norm{z_{u^{(k)}}(t) - { z_{0_\bfcalH}(t)}},
\end{align*}
where $z_{u^{(k)}}$ and $z_{0_{\bfcalH}}$ denote the remainder $z_v$ defined in~\eqref{eq:def_residual}, for $v = u^{(k)}$ and $v = 0_{\bfcalH}$, respectively.
Then, from Lemma~\ref{lemm:well_posedness_algo}, we know that $(u^{(k)})_{k}$ is uniformly bounded in $\bfcalH$. Using Lemma~\ref{lemma:xp_holder}, it follows that $\left(z_{u^{(k)}}\right)_{k}$ 
is uniformly bounded.
Thus,
\begin{equation*}
\norm{(R_1 (u^{(k)}))(t) - (R_1 (0_\bfcalH))(t)} \leq \theta M_k + C,
\end{equation*}
for some constant $C>0$ depending on $\gamma,\theta$ and $\sup_k \norm{u_k}_2$. Next, using Lemma~\ref{lemma:regularity_r2} and the reverse triangle inequality, we have that
\begin{align*}
\norm{(R_2 (R_1 (u^{(k)})))(t)} - \norm{(R_2 (R_1( 0_{\mathbf{\mathcal{H}}})))(t)} &\leq \norm{(R_2 (R_1 (u^{(k)})))(t) - (R_2 (R_1( 0_{\mathbf{\mathcal{H}}})))(t)} \\
&\leq \norm{(R_1 (u^{(k)}))(t) - (R_1 (0_{\mathbf{\mathcal{H}}}))(t)},
\end{align*}
so that
\begin{equation*}
\norm{(R_2(R_1(u^{(k)}) ))(t)} \leq \theta M_k + C + \norm{(R_2(R_1(0_{\mathbf{\mathcal{H}}})))(t)}.
\end{equation*}
Using the same arguments as in the first part, we conclude that the sequences $(u^{(k)})_{k \in \mathbb{N}}$, $(\tilde{v}^{(k)})_{k \in \mathbb{N}}$, and $(v^{(k)})_{k \in \mathbb{N}}$ are uniformly bounded.
\end{proof}

\begin{theo}
\label{thm:main_result}
Let Assumption \ref{assump:gen_OC} hold true. Let $(\mathscr{L}^{(k)},u^{(k)},v^{(k)},w^{(k)})_{k \in \mathbb{N}}$ denote the sequence generated by Algorithm \ref{algo:adaptive_extended_DRS}, executed with initial condition $u_{\text{init}} \in C_\tau^{0,\nicefrac{1}{2}}(0,T;\R^m)$. Then the sequences $v^{(k)}$ and $w^{(k)}$ have the same non-empty sets of strong cluster points, which are all stationary points of the optimal control problem, Problem \ref{pb:problem_optimal_control}.
\end{theo}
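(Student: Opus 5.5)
The plan is to combine Theorem~\ref{theo:cv_extendedDRS} with a compactness argument built on Lemma~\ref{prop:piecewise_regularity}. First, Lemma~\ref{lemma:verification_assumption_loc_lip} shows that Assumption~\ref{ssmptn:relaxed_assumption} holds for the splitting \eqref{eq:splitting_choice}, so Theorem~\ref{theo:cv_extendedDRS} applies. It yields three facts: after some iteration $k_0$ the stepsize $\gamma$ is frozen; the residual $v^{(k)}-w^{(k)}\to 0$ strongly; and $(v^{(k)})$ and $(w^{(k)})$ have the same sets of strong cluster points, each of which is a stationary point of $\phi$. Since $\phi_2$ is convex and $\phi_1\in C^1$, we have $\hat\partial\phi = \nabla\phi_1 + \partial\phi_2$. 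Hence stationarity $0\in\hat\partial\phi(x)$ is exactly the condition \eqref{eq:characterization_stationnary_points_phi}, once we use the adjoint representation $\nabla\phi_1(u)=H'_u(x[u],u,p[u])$ (as in the proof of Lemma~\ref{lemma:pmp_main_problem}). So the only thing left to prove is that the set of strong cluster points is non-empty.

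Cluster points do not depend on a finite initial segment, so I discard the first $k_0$ iterates. By Lemma~\ref{prop:piecewise_regularity}, the shifted sequence $(v^{(k)})$ is uniformly bounded in $L^\infty$ and piecewise $\nicefrac12$-H\"older with respect to $\tau$, with a modulus independent of $k$. One point needs checking here. The discussion before Lemma~\ref{prop:piecewise_regularity} says $u^{(k_0)}\in C^{0,\nicefrac12}_\tau$ when $u_{\text{init}}\in C^{0,\nicefrac12}_\tau$, and this is argued by induction through the finitely many iterations where $\gamma$ may still change. That induction is legitimate because Lemmas~\ref{lemma:regularity_R1} and~\ref{lemma:regularity_r2} hold for every $\gamma>0$, and each such iterate stays bounded in $\bfcalH$ by Lemma~\ref{lemm:well_posedness_algo}.

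Next I would apply Ascoli's theorem on each interval $(\tau_j,\tau_{j+1})$ separately. On each interval, $v^{(k)}$ agrees a.e. with a function that extends uniquely to a continuous function on $[\tau_j,\tau_{j+1}]$. The extensions are equibounded and equicontinuous, since the $\nicefrac12$-H\"older bound passes to the extension. With $J$ intervals, a diagonal extraction gives a subsequence that converges uniformly on each closed piece. Uniform convergence on each piece implies convergence in $L^2(0,T;\R^m)$, so we obtain a strong cluster point $v^*$ of $(v^{(k)})$ in $\bfcalH$. Because $\|v^{(k)}-w^{(k)}\|_2\to0$, the same subsequence of $(w^{(k)})$ converges to $v^*$. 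This shows non-emptiness, and the characterization above finishes the argument.

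I expect the main obstacle to be making the Ascoli step rigorous. The H\"older estimate is stated only for a.e. $t_1,t_2$, so one has to pass to continuous representatives on each open subinterval, which is a routine density argument. The genuinely delicate part is making sure that the uniform bounds in Lemma~\ref{prop:piecewise_regularity} really do not depend on $k$. That relies on the $L^2$-boundedness of $u^{(k)}$ coming from $u^{(k)}\in S$ with $S$ bounded, which in turn feeds the constants of Lemma~\ref{lemma:xp_holder}. I would re-verify this dependency chain explicitly before concluding.
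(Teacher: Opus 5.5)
Your proposal is correct and follows the paper's proof essentially step by step. You invoke Theorem~\ref{theo:cv_extendedDRS} via Lemma~\ref{lemma:verification_assumption_loc_lip}, then use the uniform $L^\infty$ bound and uniform piecewise H\"older modulus from Lemma~\ref{prop:piecewise_regularity} to apply Arzel\`a--Ascoli interval by interval with successive extraction, which gives an $L^\infty$ (hence $L^2$) cluster point of $(v^{(k)})$. Your extra remarks (continuous representatives on each piece, passing the subsequence to $(w^{(k)})$ through the vanishing residual, and identifying $0\in\hat\partial\phi$ with condition~\eqref{eq:characterization_stationnary_points_phi}) only spell out what the paper leaves implicit.
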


\begin{proof}
As the optimal control problem satisfies the abstract Assumption \ref{ssmptn:relaxed_assumption} of Section \ref{seq:DRS_extension}, we may apply Theorem \ref{theo:cv_extendedDRS}. Then it remains to show the existence of a strong accumulation for the sequence $(v^{k})_{k \in \mathbb{N}}$. Consider the restriction of $v^{(k)}$ to the first time interval $[0,\tau_1]$. Up to a modification of $v^{(k)}$ on a null-measure set, the sequence $(v^{(k)}_{|[0,\tau_1]})_{k \in \mathbb{N}}$ is uniformly bounded and equicontinuous. Thus it admits a uniformly convergent subsequence, by the Arzela-Ascoli theorem. Considering the restriction of the subsequence to the next interval, $[\tau_1,\tau_2]$, applying again the Arzela-Ascoli theorem and extracting a new subsequence, we obtain a subsequence of $(v^{(k)})_{k \in \mathbb{N}}$ whose restriction to $(0,\tau_2)$ converges in $L^\infty(0,T;\R^m)$. Repeating this procedure on the whole interval $(0,T)$, we obtain the existence of cluster point for the $L^\infty$-norm, which is then also a strong cluster point in $\mathcal{H}$.
\end{proof}

\section{Numerical example}
\label{sec:example}

\paragraph*{Link between the multistage stochastic problem and the main problem}

Problem~\ref{pb:problem_for_pha} is a particular instance of Problem~\ref{pb:problem_optimal_control}. 
Indeed, 
it suffices to set $\mathcal{H} = L^{2}(0,T;\mathbb{R}^{SM})$, identifying 
$L^{2}(0,T;\mathbb{R}^{SM})$ with $L^{2}(0,T;\mathbb{R}^{S \times M})$, so 
that each $u^s \in L^2(0,T;\mathbb{R}^M)$ is the control trajectory for 
scenario $\xi^s$. 
The constraint set $\mathcal{U}$ is the subspace of all 
$u \in \bfcalH$ such that $u^s \in U$ for all $s = 1,\ldots,S$.
Finally, with a slight abuse of notation, the 
functionals are defined by
\[
\ell(u,x) = \sum_{s=1}^S p_s\, \ell(u^s,x^s,\xi^{s}),
\quad
g(x) = \sum_{s=1}^S p_s g(x^s,\xi^{s}),
\quad
f(x,u) = \left( f(x^s,u^s,\xi^s) \right)_{s=1,\ldots,S},
\]
and for all $j \in \{1,\ldots,J-1\}$,
\[
\psi_{j}(u) = \iota_{N_{j} \cap U^S}(u),
\]
where the set $N_{j}$ is defined by
\[
N_{j}
:=
\bigg\{
u \in \mathbb{R}^{S \times M}
\ \mid \
\forall\, \xi^s, \xi^{s'} \in \Xi
\text{ indiscernible up to time } \tau_{j+1},
\
u\left(\cdot,\xi^{s}\right) = u\left(\cdot,\xi^{s'}\right)
\bigg\}. 
\]
Also, for any $u_1,u_2 \in \mathcal{U}$, $\sclr{u_1,u_2}:= \mathbb{E}_\xi \left[ \sclr{u_1(.,\xi) , u_2(.,\xi} \right]$ where $\sclr{.,.}$ denotes the usual scalar product of $L^2\left(0,T;\mathbb{R}^M\right)$; see Subsection~\ref{subsec:notations}. It is straightforward to verify that $N_{j}$ is a non-empty, closed linear subspace of
$\R^{S \times M}$. Consequently, for each $j \in \{ 0 , \ldots, J-1 \}$, we have $\psi_j \in \Gamma_0(\R^{S \times M}).$
\begin{rema}
For any $u \in \mathbb{R}^{S \times M}$, we have
$\prox{\gamma \psi_j}(u) = \proj{N_j \cap U^S}(u)
= \proj{U^S} \circ \proj{N_j}(u).$
Indeed, the second equality follows from the stability condition stated in
Lemma~\ref{lemma:stability_condition}, as $N_j$ is a closed linear space and that $U^S$ is closed convex. Therefore, since the projections onto these two spaces are typically given 
by closed-form expressions, the proximal operators of $\phi_1$ and $\phi_2$ 
are numerically tractable.
\end{rema}

\begin{lemm}
\label{lemma:stability_condition}
Let $N \subseteq H$ be a closed linear subspace and let $C \subseteq H$ be a closed convex set. In particular, the projections $\proj{N}$ and $\proj{C}$ are well defined.
Then, if the following stability condition holds:
\begin{equation}
\label{eq:stability_condition}
\proj{C} \circ \proj{N} \subseteq N,
\end{equation}
we have
\begin{equation}
\label{eq:composition_proj}
\proj{N \cap C} = \proj{C} \circ \proj{N}.
\end{equation}
\end{lemm}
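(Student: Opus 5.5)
We plan to use the variational characterization of the projection onto a closed convex set: for a closed convex $D \subseteq H$ and $x \in H$, a point $p$ equals $\proj{D}(x)$ if and only if $p \in D$ and $\sclr{x - p, d - p} \le 0$ for all $d \in D$. We apply this with $D = N \cap C$, which is closed and convex, and nonempty whenever the statement is meaningful. Note that if $N\cap C=\emptyset$, the hypothesis already fails, since $\proj{C}\circ\proj{N}(x)\in C$ and the stability condition would put it in $N\cap C$.

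Fix $x \in H$ and set $y := \proj{N}(x)$ and $p := \proj{C}(y)$. By the stability condition \eqref{eq:stability_condition}, $p \in N$, and $p \in C$ by construction, so $p \in N \cap C$. It remains to verify the obtuse-angle inequality. For $d \in N \cap C$ we split
\[
\sclr{x - p, d - p} = \sclr{x - y, d - p} + \sclr{y - p, d - p}.
\]
For the first term, $x - y \in N^\perp$ because $N$ is a closed linear subspace. Both $d$ and $p$ lie in $N$, so $d - p \in N$, and the first term vanishes. For the second term, $d \in C$ and $p = \proj{C}(y)$, so the variational inequality for $\proj{C}$ gives $\sclr{y - p, d - p} \le 0$. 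Summing the two terms shows that $p$ satisfies the characterization of $\proj{N\cap C}(x)$, hence $p = \proj{N \cap C}(x)$. Since $x$ was arbitrary, \eqref{eq:composition_proj} follows.

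The main delicate point is that $p\in N$ is used in two places: to guarantee that $p$ is feasible for $N\cap C$, and to make $d-p$ orthogonal to $x-y$. Both uses are supplied exactly by the stability condition, so no further obstacle is expected. The only other ingredients are the standard facts that $x-\proj{N}(x)\in N^\perp$ for a closed subspace and the projection characterization for closed convex sets, both classical (e.g.\ \cite{bauschke2017convex}).
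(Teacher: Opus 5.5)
Your proof is correct and follows essentially the same route as the paper. Both verify the variational inequality characterizing $\proj{N\cap C}(x)$ at $p=\proj{C}(\proj{N}(x))$, splitting off $x-\proj{N}(x)\in N^\perp$ and applying the variational inequality for $\proj{C}$ at $\proj{N}(x)$. You also state explicitly that $p\in N$ is needed twice, for feasibility and for $d-p\in N$, which the paper's write-up leaves implicit when it passes from $\proj{N\cap C}(x)$ to $\proj{C}\circ\proj{N}(x)$.
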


\begin{proof}
See Appendix~\ref{appendix:proof_stability_condition}.    
\end{proof}

\paragraph*{Optimal control problem}

We are interested in solving a special instance of Problem~\ref{pb:problem_for_pha}. We consider the time horizon $T=10$ as well as the intermediary times $\tau_1 = 3, \tau_2 = 6$ on which a random process $\xi$ which takes values in the interval $[0,1]$ is progressively revealed. We arbitrarily choose a process which is revealed in $S=4$ scenarios through the time horizon, as described in Figure~\ref{fig:scenario_tree}.

The additional information for the problem is as follows. For every $s \in \Xi$, we assume $p_s = 0.25$. The running and terminal costs are $\ell(u,x,\xi) = \tfrac{1}{2}\,\left(u^{2} + x^{2}\right)$ and $\varphi(x,\xi) = 25\,\bigl(x - \xi(T)\bigr)^{2}$. Also, the state variable evolves according to
\[
\frac{\mathrm{d}}{\mathrm{d}t} x(t,\xi) = x(t,\xi)\,u(t,\xi),\qquad
x(0,\xi) = 0.5,
\quad \text{for all } \xi \in \Xi.
\]

One can verify that, in this framework, Assumption~\ref{assump:gen_OC} is satisfied. 
However, it can also be shown that $\nabla \phi_{1}$ is not globally Lipschitz in this setting, and therefore the adaptive DRS algorithm does not come with a guaranteed convergence result for the applied problem.

\paragraph*{Numerical test} We numerically solve the applied problem described above using Algorithm~\ref{algo:adaptive_extended_DRS}. The computations are performed with $K = 600$ iterations, an initial guess of the Lipschitz constant $L = 1$, and the corresponding initial stepsize $\gamma_{0} = 0.25$. We also test the numerical procedure with different choices of the adaptive parameter $C \in \{0.05, 0.1, 1.0\}$. We used a Differential-Algebraic Equation (DAE) solver to resolve the optimal control subproblems arising from the proximal operator applied to \(\phi_1\), as in \eqref{eq:problem_prox_phi1}. The solver is a two‑point boundary DAE method designed for index‑1 differential–algebraic systems. For further details, we refer the reader to \cite{malisani2024interior}.

The algorithm is initialized with the null control, which yields an initial cost of 4.438. The candidate state trajectories obtained after the procedure are shown in Figure~\ref{fig:state_trajectories}. The cost function decreases toward the value 0.778 for all considered values of the adaptive parameter $C$, as illustrated in Figure~\ref{fig:objective_function}. Finally, Figure~\ref{fig:residuals} shows a linear convergence rate of the fixed-point residuals, which is faster than the theoretical rates established in Theorem~\ref{theo:cv_extendedDRS}.

Table~\ref{table:stepsize_reductions} shows that the adaptive parameter $C$ should be taken sufficiently large to avoid excessive reductions of the stepsize $\gamma$. 
Indeed, as mentioned in Remark~\ref{rema:paramter_choice_extended_drs}, excessive stepsize reductions can slow down the overall convergence of the method, as illustrated in Figures~\ref{fig:objective_function} and~\ref{fig:residuals}.

\begin{table}[H]
\centering
\begin{tabular}{|c|c|c|c|}
\hline
$C$ & \# Stepsize Updates & Last Stepsize  & Updates (Mech.~1 / Mech.~2) \\
\hline
1  & 2 & $6.25\text{e}10^{-2}$ & 0 / 2 \\
\hline
0.1 & 6 & $3.90625\text{e}10^{-3}$ & 6 / 0 \\
\hline
0.05 & 7 & $1.953125\text{e}10^{-3}$ & 7 / 0 \\
\hline
\end{tabular}
\caption{\small Summary of the stepsize updates performed by Algorithm~\ref{algo:adaptive_extended_DRS} for different values of the adaptive parameter C. Iteration 0 corresponds to the Initialize step, specifically the procedure described in Algorithm~\ref{algo:drs_init}. “Mech. 1” and “Mech. 2” denote the decay and boundedness mechanisms of Algorithm~\ref{algo:adaptive_extended_DRS}, respectively. Finally, when \(C = 1\), all stepsize updates occur during the first iteration; otherwise, they take place during the initialization phase of Algorithm~\ref{algo:adaptive_extended_DRS}.}
\label{table:stepsize_reductions}
\end{table}

\begin{figure}
\centering
\begin{minipage}{0.48\linewidth}
\centering
\includegraphics[width=\linewidth]{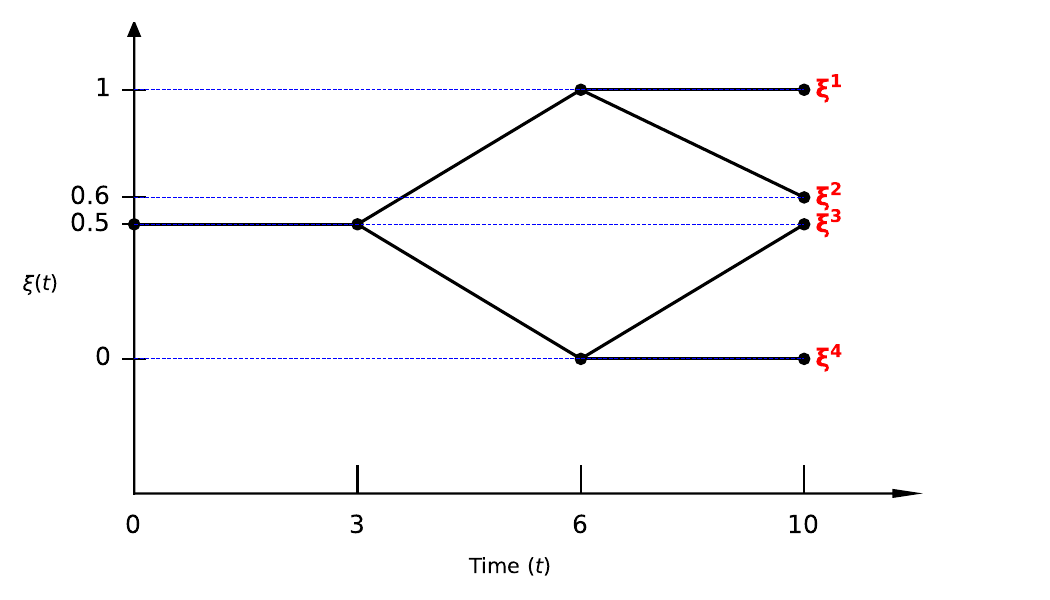}
\caption{\small Evolution of the process $\xi$ used in the numerical experiment over time. Its different trajectories form a scenario tree.}
\label{fig:scenario_tree}
\end{minipage}
\hfill
\begin{minipage}{0.48\linewidth}
\centering
\includegraphics[width=\linewidth]{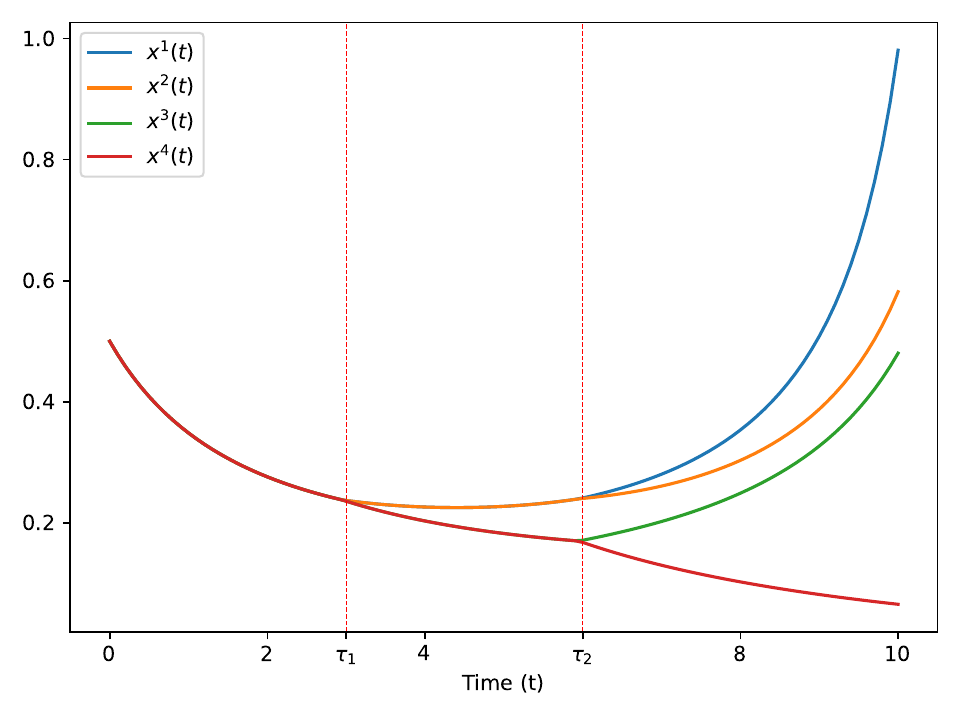}
\caption{\small State trajectories of $x := x[v^{K}]$ for each scenario after running Algorithm~\ref{algo:adaptive_extended_DRS} with the adaptive parameter $C=1$. Note the non-anticipativity of the variable $x$.}
\label{fig:state_trajectories}
\end{minipage}
\end{figure}

\begin{figure}
\centering

\begin{minipage}{0.48\linewidth}
\centering
\includegraphics[width=\linewidth]
{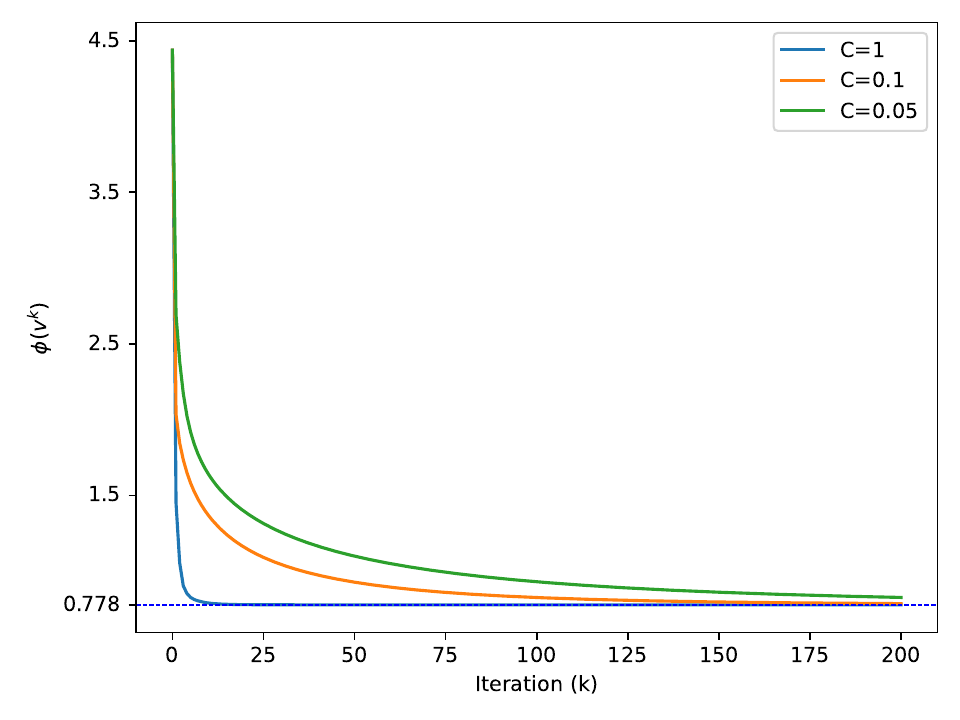}
\caption{\small Comparison of the decrease in the objective function over the first 200 iterations of Algorithm~\ref{algo:adaptive_extended_DRS} for the different values of the parameter $C$.}
\label{fig:objective_function}
\end{minipage}
\hfill
\begin{minipage}{0.48\linewidth}
\centering
\includegraphics[width=\linewidth]{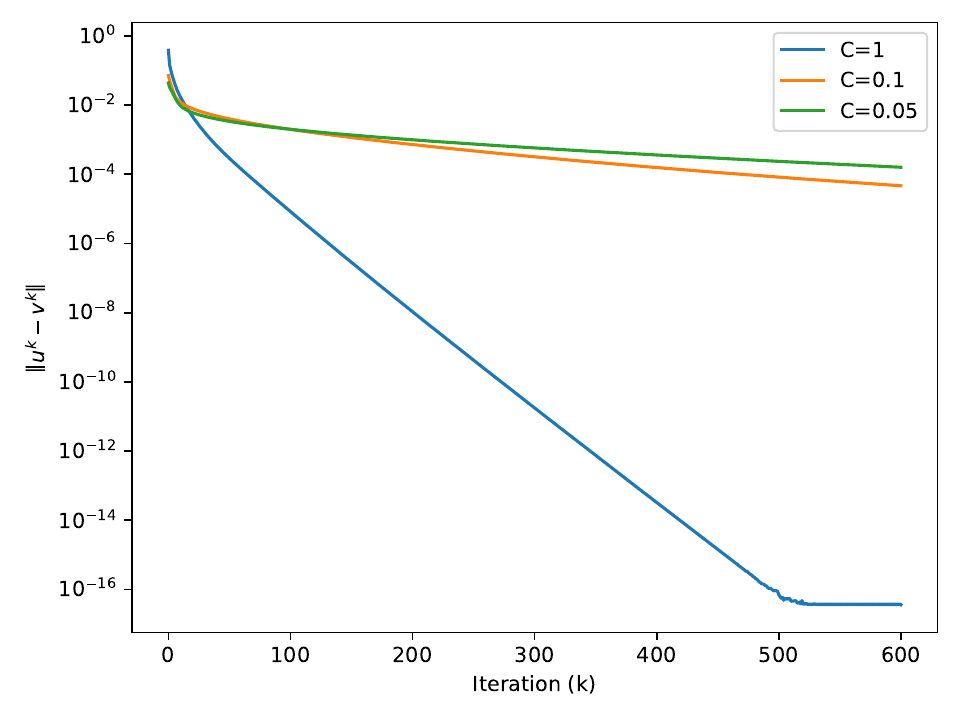}
\caption{\small Comparison of the convergence speed of the fixed-point residuals obtained when running Algorithm~\ref{algo:adaptive_extended_DRS} for different values of the parameter $C$, displayed on a semilogarithmic scale.}
\label{fig:residuals}
\end{minipage}

\end{figure}

\section{Appendix}

This section gathers the proof of some technical lemmas.

\subsection{ Proof of Lemma \ref{lemm:outer_semicontinuity}}
\label{appendix:proof_outer_semicontinuity}
For any $k \in \mathbb{N}$ and any $z \in H$, it holds that
\begin{equation}
\label{eq:lemm_outersemicont}
\varphi(y^{(k)}) + \frac{1}{2\gamma} \| y^{(k)} - x^{(k)} \|^2
\leq\varphi(z) + \frac{1}{2\gamma} \| z - x^{(k)} \|^2.
\end{equation}
Passing to the limit as $k \to \infty$, we obtain that
\begin{equation*}
\varphi(y) + \frac{1}{2\gamma} \| y - x \|^2
\leq\varphi(z) + \frac{1}{2\gamma} \| z - x \|^2.
\end{equation*}
The conclusion follows.

\subsection{ Proof of Lemma \ref{lemm:monotonicity_prox}}
\label{appendix:proof_monot}

First, we have that 
\[
\varphi(y) + \frac{1}{2\gamma}\|y-x\|^2 \;\leq\; \varphi(y') + \frac{1}{2\gamma}\|y'-x\|^2.
\]
As the same inequality holds exchanging $y$ and $y'$, we deduce that 
\[
\frac{1}{\gamma}\|y-x\|^2 + \frac{1}{\gamma'}\|y'-x\|^2
\;\leq\;
\frac{1}{\gamma}\|y'-x\|^2 + \frac{1}{\gamma'}\|y-x\|^2.
\]
Rearranging gives
\[
\Bigl(\tfrac{1}{\gamma} - \tfrac{1}{\gamma'}\Bigr)\bigl(\|y-x\|^2 - \|y'-x\|^2\bigr) \leq 0.
\]
The statement follows as $\gamma' < \gamma$. 

\subsection{ Proof of Lemma~\ref{lemma:xp_holder}}
\label{appendix:proof_xp_holder}

We first show that $x[u]$ is $\nicefrac{1}{2}$-Hölder continuous and uniformly bounded. From Assumption~\ref{assump:gen_OC},
\begin{equation*}
  x(t) = x_0 + \int_0^T f_0(x(\lambda)) + f_1(x(\lambda))\,u(\lambda)\,\mathrm{d}\lambda,
\end{equation*}
and by the same assumption,
\begin{align*}
  \norm{x(t)} &\leq \norm{x_0} + KT + K\sqrt{T}\norm{u}_2
    + K\int_0^T \norm{x(\lambda)}\bigl(1 + \norm{u(\lambda)}\bigr)\,\mathrm{d}\lambda.
\end{align*}
A direct application of Gronwall's lemma, together with the Cauchy-Schwarz inequality, yields:
\begin{equation}
\label{eq:uniform_boundedness_x}
\norm{x(t)} \leq C_{x_0,K,T,N} \exp\left(\int_0^T K\left(1+\norm{u(\lambda)}\right) \ddl \right) \leq C'_{x_0,K,T,N}.
\end{equation}
Let $C_x:= C'_{x_0,K,T,N}$. Therefore $x\in L^{\infty}(0,T;\R^m)$ with $\norm{x}_\infty$ being independent of $u$. Then, from Assumption~\ref{assump:gen_OC} again, we have
\begin{align}
\norm{\dot{x}}_{L^2}^2 &\leq \int_0^T 2 \left( \norm{f_0\left( x(\lambda) \right)}^2 + \norm{f_1\left( x(\lambda) \right) u(\lambda)}^2 \right) \ddl \\
&\leq C_{f_0,K,T,C_x} + C_{f_1,K,T,C_x} \| u \|^2_2 \\
&\leq C_{f_0,f_1,K,T,C_x}.
\label{eq:uniform_boundedness_x_dot}
\end{align}
For the second and third inequalities, we use the fact that $f_0$ and $f_1$ are bounded on compact sets containing $\|x(\lambda)\|$. It follows that $\dot{x}$ is uniformly bounded in $\bfcalH$ by a constant independent of $u$, which ensures that $x[u]$ is $\nicefrac{1}{2}$-Hölder continuous with a modulus independent of $u$.

The regularity of $p$ can be established in the same manner: an estimate of $\| p \|_\infty$ can be obtained with Gronwall's lemma. It further yields an estimate of $\| \dot{p} \|_2$, from which the $\nicefrac{1}{2}$-H\"older continuity follows.

\subsection{ Proof of Lemma \ref{lemma:verification_assumption_loc_lip}}
\label{appendix:proof_verification_assumption_loc_lip}

\noindent \textbf{\textit{Step 1: weak lower semicontinuity of $\phi_1$.}} Let $(u_k)_{k \in \mathbb{N}},\bar{u} \in \bfcalH$ such that $u_k \rightharpoonup \bar{u}$ as $k \rightarrow +\infty$.
Using the convexity of $\ell_0$, for a.e $t \in (0,T)$ we have
\begin{equation*}
\ell_0(u_k(t)) \geq \ell_0(\bar{u}(t)) + \left\langle \nabla \ell_0(\bar{u}(t)), u_k(t) - \bar{u}(t) \right\rangle.
\end{equation*}
By integrating and passing to the limit, we obtain
\begin{equation*}
\liminf_{k \to +\infty} \int_0^T \ell_0(u_k(t)) \, \ddt \geq \int_0^T \ell_0(\bar{u}(t)) \, \ddt + \liminf_{k \to +\infty} \int_0^T \left\langle \nabla \ell_0(\bar{u}(t)), u_k(t) - \bar{u}(t) \right\rangle \ddt.
\end{equation*}
The second term on the right-hand side of the inequality vanishes due to weak convergence. Therefore,
\begin{equation}
\label{eq:to_gather_1}
\liminf_{k \to +\infty} \int_0^T \ell_0(u_k(t)) \, \ddt \geq \int_0^T \ell_0(\bar{u}(t)) \, \ddt.
\end{equation}
Now, as $(u_k)_{k \in \mathbb{N}}$ is weakly convergent, is it also bounded in $\bfcalH$. It follows from Lemma~\ref{lemma:xp_holder} that $(x[u_k])_{k \in \mathbb{N}}$ is uniformly bounded and equicontinuous. Therefore, a direct application of Ascoli's theorem yields, up to a subsequence,
\begin{equation}
\label{eq:uniform_cv_xn}
x[u_k] \xrightarrow[k \to +\infty]{L^{\infty}\left(0,T;\R^{n}\right)} \bar{x},
\end{equation}
for some $\bar{x} \in AC(0,T;\R^{n})$.
Moreover,
\begin{equation}
\label{eq:ode_before_limit}
x[u_k](t) = x_0 + \int_0^t f_0\left(x[u_k](\lambda)\right) + f_1\left(x[u_k](\lambda)\right) u_k(\lambda) \ \ddl.
\end{equation}
Using \eqref{eq:uniform_cv_xn} and the continuity of $f_0$,
\(
\int_0^t f_0\left(x[u_k](\lambda)\right) \, \ddl \xrightarrow[k \rightarrow +\infty]{} \int_0^t f_0\left(\bar{x}(\lambda)\right) \, \ddl.
\) Then, note that
\begin{align*}
\int_0^t f_1\left(x[u_k](\lambda)\right) u_k(\lambda) - f_1\left(\bar{x}(\lambda)\right)\bar{u}(\lambda) \, \ddl =& \int_0^t \left[ f_1\left(x[u_k](\lambda)\right)-f_1\left(\bar{x}(\lambda)\right) \right] u_k(\lambda) \, \ddl \\
&+ \int_0^t f_1\left(\bar{x}(\lambda)\right)\left( u_k(\lambda) - \bar{u}(\lambda) \right) \, \ddl.
\end{align*}
Using Cauchy-Schwarz and the boundedness of $(u_k)_{k \in \mathbb{N}}$ in $\bfcalH$, we have
\begin{equation*}
\left\| \int_0^t \left( f_1\left(x[u_k](\lambda)\right) - f_1\left(\bar{x}(\lambda)\right) \right) u_k(\lambda) \, \mathrm{d}\lambda \right\|
\leq N \|f_1(x[u_k]) - f_1(\bar{x})\|.
\end{equation*}
Now, as $f_1$ is continuous and $\left(x[u_k]\right)_{k \in \mathbb{N}}$ converges uniformly to $\bar{x}$, the right-hand side of the inequality vanishes as $k \rightarrow +\infty$. It follows that:
\begin{equation}
\label{eq:limit_ode_1}
\int_0^t \left[ f_1\left(x[u_k](\lambda)\right)-f_1\left(\bar{x}(\lambda)\right) \right] u_k(\lambda) \, \ddl \xrightarrow[k \rightarrow +\infty]{} 0.
\end{equation}

Then, let us define for every \(t \in (0,T)\) the operator
\(\tilde{A}_t \in L^2(0,T;\mathbb{R}^{m \times n})\) as well as the mapping \(L_t : \bfcalH \to \mathbb{R}\) defined by
\[
\tilde{A}_t(\lambda) :=
f_1(\bar{x}(\lambda)) \,\mathbf{1}_{[0,t]}(\lambda), \quad L_t(u) := \int_0^T \tilde{A}_t(\lambda)\, u(\lambda)\, d\lambda .
\]
Note that $L_t$ is a linear continuous operator on $\bfcalH$.
Then,
\begin{equation*}
\int_0^t f_1\left(\bar{x}(\lambda)\right)\left( u_k(\lambda) - \bar{u}(\lambda) \right) \, \ddl = L_t ( u_k - \bar{u} ),
\end{equation*}
and as $u_k$ converges weakly to $\bar{u}$,
\(
L_t ( u_k - \bar{u} ) \xrightarrow[k \to +\infty]{} 0.
\)
It follows that
\begin{equation*}
\int_0^t \left[ f_1(x[u_k](\lambda)) u_k(\lambda) - f_1(\bar{x}(\lambda)) \bar{u}(\lambda) \right] \, \ddl \xrightarrow[k \to +\infty]{} 0.
\end{equation*}
Passing to the limit in \eqref{eq:ode_before_limit} then leads to
\(
\bar{x}(t) = x_0 + \int_0^t \left[ f_0(\bar{x}(\lambda)) + f_1(\bar{x}(\lambda)) \bar{u}(\lambda) \right] \, \ddl,
\)
i.e., $\bar{x}=x[\bar{u}]$.
Hence,
\begin{equation}
\int_0^T \ell_1\left(x[u_k](t)\right) \, \ddt + \varphi\left(x[u_k](T)\right) \xrightarrow[k \rightarrow +\infty]{} \int_0^T \ell_1\left(\bar{x}(t)\right) \, \ddt + \varphi\left(\bar{x}(T)\right).
\end{equation}
Combining this limit with \eqref{eq:to_gather_1}, we obtain
\(
\liminf_{k \to +\infty} \phi_1(u_k) \geq \phi_1(\bar{u}).
\)
\\

\noindent \textbf{\textit{Part 2: lower boundedness of $\phi_1$.}} Let $u \in \bfcalH$ and $p \in \R^{m}$. Using the strong convexity of $\ell_0$, there exists some $\alpha >0$ such that
\begin{equation}
\label{eq:lower_bound_phi_1}
\int_0^T \ell_0(u(t)) \ddt
\geq \int_0^T \sclr{\nabla \ell_0 (p),u(t)-p} + \frac{\alpha}{2}\norm{u(t)-p}^2 \ddt + \int_0^T \ell_0(p) \ddt.
\end{equation}
Expending the squared norm, we have
\begin{equation*}
\int_0^T \sclr{\nabla \ell_0 (p),u(t)-p} + \frac{\alpha}{2}\norm{u(t)-p}^2 \ddt = \frac{\alpha}{2} \norm{u}^2 + \int_0^T \sclr{u(t),\nabla \ell_0(p)-2p} \ddt + C_1,
\end{equation*}
where $C_1 \in \R$ depends only on $p$ and $T$. Moreover, thanks to Cauchy-Schwarz inequality:
\begin{equation*}
\int_0^T \sclr{\nabla \ell_0 (p),u(t)-p} + \frac{\alpha}{2}\norm{u(t)-p}^2 \ddt \geq \frac{\alpha}{2} \norm{u}^2 - C_2 \norm{u} + C_1,
\end{equation*}
with $C_2 \in \R$ also depending only on $p,T$. Combining this last inequality and \eqref{eq:lower_bound_phi_1}, together with the fact that $\ell_1,\varphi$ are lower bounded, we deduce that there exists $C, C' \in \R$ such that
\begin{equation}
\label{eq:lower_boundedness_phi}
\phi_1(u) \geq  \frac{\alpha}{2} \norm{u}^2 + C \norm{u} + C'.
\end{equation}
As $\phi_1(u)$ is lower-bounded by a quadratic function of the norm of $u$, the results follows.
\\
\\

\noindent \textbf{\textit{Part 3: local Lipschitz continuity of $\nabla \phi_1$.}} According to classical optimal control theory—see, for instance, \cite[Subsec.~1.6]{hinze2008optimization}— \( \phi_1 \) is continuously differentiable, and for any \( u \in \bfcalH \), its gradient is given by
\begin{equation}
\label{eq:gradient_expression}
\nabla \phi_1(u)(t) = \nabla \ell_0\big(u(t)\big) + f_1\big(x(t)\big)^\top p(t) = H'_u\big(x(t), u(t), p(t)\big) \quad \text{for a.e. } t \in (0,T),
\end{equation}
where \( x = x[u] \) denotes the state trajectory associated with \( u \), and \( p \) is the corresponding adjoint state solving \eqref{eq:dynamics_adjoint_state}. As $\nabla \ell_0$ is assumed Lipschitz continuous, it is straightforward to verify that the mapping
\[
u \in \bfcalH \rightarrow \nabla \ell_0(u) \in \bfcalH
\]
is also (locally) Lipschitz continuous. Thus, it suffices to show that \( f_1(x)^\top p \) is locally Lipschitz with respect to \( u \). To establish this, it is enough to verify that both \( p \) and \( f_1(x) \) are locally Lipschitz, as the scalar product of locally Lipschitz functions is locally Lipschitz. Let us therefore consider two controls \( u_1, u_2 \in \bfcalH \) such that \( \|u_1\|, \|u_2\| \leq M \) for some constant \( M > 0 \).
Let \( x_1, x_2 \) be their associated state variables, solutions to \eqref{eq:dynamics_intro} under Assumption~\ref{assump:gen_OC}, and let \( p_1, p_2 \) be the corresponding adjoint states, solutions to \eqref{eq:dynamics_adjoint_state}. Then, thanks to Lemma~\ref{lemma:verification_assumption_loc_lip}, we have that \(x_1\), \(x_2\), \(p_1\), and \(p_2\) are also uniformly bounded (by $M$ without loss of generality). 
\\
\\
\textit{Step 1: regularity of $f_1(x)$ with respect to the control $u$.}
\\
First, because $f_1$ is locally Lipschitz continuous,
\[
\norm{f_1\left(x_1(t)\right) - f_1\left( x_2(t) \right)} \leq L_{f_1} \norm{x_1(t) - x_2(t)}, \quad \text{for a.e } t \in (0,T),
\]
for some constant $L_{f_1}>0$. For the sake of conciseness, all the equations in the rest of the proof involving the time variable $t$ are written for a.e $t \in (0,T)$. From~Assumption~\ref{assump:gen_OC}, it follows that:
\begin{equation}
\label{eq:proof_smoothness_1}
\norm{\dot{x}_2(t) - \dot{x}_1(t)} \leq L_{f_0} \norm{x_2(t) - x_1(t)} + \norm{ f_1\left(x_2(t)\right)u_2(t) - f_1\left(x_1(t)\right)u_1(t)},
\end{equation}
for some constant $L_{f_0}>0$. Moreover:
\begin{align*}
\norm{ f_1\left(x_2(t)\right)u_2(t) - f_1\left(x_1(t)\right)u_1(t)} &\leq \norm{f_1(x_2(t))}\norm{u_2(t)-u_1(t)}\\
&+ \norm{f_1(x_2(t))-f_1(x_1(t))}\norm{u_1(t)}.
\end{align*}
Moreover, as $f_1$ is locally Lipschitz continuous, we deduce from the last inequality that
\begin{equation*}
\norm{ f_1\left(x_2(t)\right)u_2(t) - f_1\left(x_1(t)\right)u_1(t)} \leq C_{f_1} \norm{u_2(t) - u_1(t)} + L_{f_1} \norm{x_2(t)-x_1(t)} \norm{u_1(t)},
\end{equation*}
for some constant $C_{f_1}>0$. So, from~\eqref{eq:proof_smoothness_1}, we have
\begin{equation*}
\norm{\dot{x}_2(t) - \dot{x}_1(t)} \leq \left(L_{f_0}+L_{f_1} \norm{u_1(t)} \right) \norm{x_2(t)-x_1(t)} + C_{f_1} \norm{u_2(t) - u_1(t)},
\end{equation*}
and applying Grönwall's lemma yields
\begin{equation}
\norm{x_2(t)-x_1(t)} \leq C_{T,M,L_{f_0},L_{f_1},C_{f_1}} \int_0^t \norm{u_2(s)-u_1(s)} \dds \leq C_{T,M,L_{f_0},L_{f_1},C_{f_1}} \norm{u_2 - u_1}_2.
\label{eq:to_use_3}
\end{equation}
We then establish the first part of the result by substituting this inequality into the first one of this step of the proof, squaring both sides, and integrating, which yields
\begin{equation}
\label{eq:lip_state_vs_control}
\norm{f_1(x_2) - f_1(x_1)}^2_2 \leq C_{T,M,L_{f_0},L_{f_1},C_{f_1}}^2 \norm{u_2 - u_1}^2_2.
\end{equation}
\\
\textit{Step 2: regularity of the adjoint state $p$ with respect to the control $u$.}
\\
Note that
\begin{align}
\label{eq:to_combine_0}
\dot{p}_2(t) - \dot{p}_1(t) &= \nabla \ell_1 (x_1(t)) - \nabla \ell_1 (x_2(t)) \\
&+ \ddD f_0 (x_1(t))^\top p_1(t) - \ddD f_0 (x_2(t))^\top p_2(t) \label{eq:to_combine_0_1} \\
&+\sum_{i=1}^n \ddD f_1^i \left(x_1(t)\right)^\top p_1(t) u_1^i(t) -  \ddD f_1^i \left(x_2(t)\right)^\top p_2(t) u_2^i(t). \label{eq:to_combine_0_2}
\end{align}
On the one hand, as $x_1, p_2 \in L^{\infty}(0,T;\R^n)$ and $\ddD f_0$ is locally Lipschitz continuous, it follows that:
\begin{equation}
\label{eq:to_combine_1}
\norm{\ddD f_0 (x_1(t))^\top p_1(t) - \ddD f_0 (x_2(t))^\top p_2(t)} \leq C_{\ddD f_0,p} \left( \norm{ p_1(t)-p_2(t) } + \norm{x_1(t) - x_2(t) } \right),
\end{equation}
for some constant $ C_{\ddD f_0,p}>0$. On the other hand, using similar arguments, for every $i=1,\ldots,n$, we deduce that
\begin{align*}
& \norm{\ddD f_1^i \left(x_1(t)\right)^\top p_1(t) u_1^i(t) - \ddD f_1^i \left(x_2(t)\right)^\top p_2(t) u_2^i(t)} \\
& \qquad \leq C_{Df_1, L_{f_1}} \bigg[ \norm{u_1(t)} \norm{p_1(t)-p_2(t)} + \norm{u_2(t) - u_1(t)} + \norm{x_1(t)-x_2(t)} \norm{u_2(t)} \bigg].
\end{align*}
Combining this last estimate with \eqref{eq:to_combine_1}, and substituting into \eqref{eq:to_combine_0}–\eqref{eq:to_combine_0_1}–\eqref{eq:to_combine_0_2}, it follows that:
\begin{align*}
\|\dot{p}_2(t) - \dot{p}_1(t)\| &\leq C_{D_{f_1}, D_{f_0}, m, p, C_{l_1}, M} \bigg[ \|p_1(t) - p_2(t)\|\left(1 + \|u_1(t)\|\right) \\
&+ \|x_2(t) - x_1(t)\| + \|u_2(t)\| + \|u_2(t) - u_1(t)\| \bigg].
\end{align*}
A direct application of Grönwall Lemma yields:
\begin{align*}
\norm{p_2(T - t) - p_1(T - t)} &\leq C'_{D_{f_1}, D_{f_0}, m, p, C_{l_1}, M} \Bigg[ \norm{\nabla \varphi(x_2(T)) - \nabla \varphi(x_1(T))} \\
&+ \int_t^T \norm{x_2(\tau) - x_1(\tau)}(1 + \norm{u_2(\tau)}) + \norm{u_2(\tau) - u_1(\tau)} \ddtau \Bigg].
\end{align*}
Also,
\begin{equation*}
\norm{\nabla \varphi(x_2(T)) - \nabla \varphi(x_1(T))} \leq L_{\nabla \varphi} \norm{x_2(T) - x_1(T)} \leq C \norm{u_2 - u_1}_2,
\end{equation*}
where the first inequality is due to the Lipschitz continuity of $\nabla \varphi$ and the second one to \eqref{eq:to_use_3}. It follows that:
\begin{align*}
\norm{p_2(t)-p_1(t)} &\leq C_{D_{f_1}, D_{f_0}, m, p, C_{l_1}, M , \nabla \varphi} \left( \norm{x_2 - x_1}_2 + \norm{u_2 - u_1}_2 \right).
\end{align*}
Finally, squaring both sides of the inequality, integrating, and using \eqref{eq:to_use_3}, we obtain the following estimate:
\begin{equation*}
\norm{p_2 - p_1}^2_2 \leq C_{D_{f_1}, D_{f_0}, m, p, C_{l_1}, M \nabla \phi} \norm{u_2 - u_1}^2_2,
\end{equation*}
This proves the Lipschitz continuity of the adjoint state with respect to the control.
\\
\\
\textbf{\textit{Part 4: on $\phi_2$.}} The convexity, properness, lower boundedness of $\phi_2$ as well as the boundedness of its domain are straightforward to verify. We first show that it is also lower semicontinuous. Let us rewrite $\phi_2$ as
\begin{equation*}
\label{eq:phi_2_rewritten_1}
\phi_2(u)=\int_0^T \psi\left(t,u(t)\right) \ddt,
\end{equation*}
with
$\psi : [0,T) \times \R^n \mapsto \R$ defined as
\begin{equation}
\label{eq:phi_2_rewritten_2}
\psi(t,u) := \sum_{j=1}^{J-1} \psi_{j}(u) \mathbf{1}_{[\tau_j , \tau_{j+1})}(t).
\end{equation}
Also, for a.e $t \in (0,T)$, $u \mapsto \psi(t,u)$ is lsc and as for every $j \in \{ 0 , \ldots , J-1 \}$, $\psi_j \in \Gamma_0(\R^n)$ has a bounded domain, it admits a minimizer so that $\inf_{t \in (0,T), u \in \R^n} \psi(t,u) > -\infty$. We deduce that $\phi_2$ is lsc by a direct application of \cite[Lemma 9.1]{carlier2021classical}. Finally, since \(\phi_2 \) is also convex, we deduce that it is weakly lower-semicontinuous; see, for instance, \cite[Thm.~3.8]{carlier2021classical}.

\subsection{ Proof of Lemma~\ref{lemma:pmp_main_problem}}
\label{appendix:proof_pmp_main_problem}
A direct consequence of the definition of the regular subdifferential—see also \cite[Ex.~8.8]{rockafellar1998variational}—together with the expression of the gradient of $\phi_1$ as in~\eqref{eq:gradient_expression} yields
\begin{equation}
\label{eq:necessary_condition_local_optimality_2}
\hat{\partial} \phi(u) = \nabla \phi_1(u) + \hat{\partial} \phi_2(u).
\end{equation}
As $\phi_2 \in \Gamma_0(H)$, thanks to \cite[Thm. 16.3]{clason2017nonsmooth}, $\hat{\partial} \phi_2 (u) = \partial \phi_2(u)$.
Combining \eqref{eq:necessary_condition_local_optimality_2} and \eqref{eq:gradient_expression}, it follows that the necessary condition for local optimality (see \cite[Thm.~16.2]{clason2017nonsmooth})
\(
0 \in \hat{\partial} \phi(u)
\)
can be equivalently written as
\(
-\nabla \phi_1(u) \in \partial \phi_2(u).
\)
Next, we show that
\begin{equation}
\label{eq:subdiff_phi_2_charact}
\partial \phi_2(u) = \left\{ v \in \bfcalH \mid \forall j \in \{0,\ldots,J-1\},\ \text{ for a.e. } t \in (\tau_j,\tau_{j+1}):\ v(t) \in \partial \psi_j\big(u(t)\big) \right\}.
\end{equation}
It is straightforward to verify that if, for almost every \( t \in (\tau_j, \tau_{j+1}) \), we have \( v(t) \in \partial \psi_j\big(u(t)\big) \), then \( v \in \partial \phi_2(u) \).
Conversely, if \( v \in \partial \phi_2(u) \), then by the definition of the subdifferential, for all \( u' \in \bfcalH \) we have
\begin{equation*}
\phi_2(u') \geq \phi_2(u) + \sclr{v, u' - u}_\bfcalH,
\end{equation*}
so that
\begin{equation}
\label{eq:characterization_prox}
u \in \argmin_{u' \in H} \ \phi_2(u') + \sclr{v , u' - u}_\bfcalH.
\end{equation}
Recall the definition of $\psi$ as in \eqref{eq:phi_2_rewritten_2}.
From \eqref{eq:characterization_prox}, it follows that
\begin{equation*}
u \in \argmin_{u' \in H} \ \int_0^T \psi\left(t,u'(t)\right) - \sclr{v(t),u'(t)} \ddt.
\end{equation*}
Then, the mapping \( (t,u) \mapsto \psi(t,u) - \sclr{v(t),u} \) is a normal integrand in the sense of \cite[Def.~14.27]{rockafellar1998variational}. We justify this claim in the next paragraph. An application of \cite[Thm.~14.60]{rockafellar1998variational} yields the following result:
\begin{equation*}
u(t) \in \argmin_{u \in \mathbb{R}^n} \left\{ \psi(t,u) - \sclr{v(t),u} \right\} \quad \text{for a.e. } t \in (0,T).
\end{equation*}
It follows that \( v(t) \in \partial \psi\left(t,u(t)\right) \), which in turn implies that \( v(t) \in \partial \psi_j\left(u(t)\right) \) for almost every \( t \in (\tau_j, \tau_{j+1}) \) and every \( j \in \{0,\ldots,J-1\} \). This shows, in particular, that \( v \) belongs to the set on the right-hand side of the equality in \eqref{eq:subdiff_phi_2_charact}, and the result of the lemma follows.

We now justify the required assumption of the invoked theorem, \cite[Thm.~14.60]{rockafellar1998variational}. Note that for almost every \( t \in (0,T) \), the function \( \psi(t,\cdot) - \langle v(t), \cdot \rangle \) is convex and lower semicontinuous, and for every \( u \in \mathbb{R}^n \), the mapping \( t \mapsto \psi(t,u) - \langle v(t), u \rangle \) is measurable. Moreover, recalling the expression of \( \psi \) in \eqref{eq:phi_2_rewritten_2}, for every \( j \in \{1, \ldots, J-1\} \) and \( t \in (\tau_j,\tau_{j+1}) \), one can find a countable dense subset \( D_j \) of the domain of \( \psi(t,\cdot) = \psi_j \), which is also the domain of \( \psi(t,\cdot) - \sclr{v(t),\cdot} \), and construct a countable family \( U_j \subseteq \bfcalH \) of constant functions from \( [0,T] \) to \( D_j \). Then, the countable family of functions \( \bigcup_{j=1}^{J-1} U_j \) satisfies the requirements of \cite[Prop.~14.39]{rockafellar1998variational}, and together with the aforementioned properties, the mapping \( (t,u) \mapsto \psi(t,u) - \sclr{v(t),u} \) is indeed a normal integrand. Similar arguments can be found in the proof of \cite[Lemma~1]{rockafellar1968integrals}.

\subsection{ Proof of Lemma \ref{lemma:stability_condition}}
\label{appendix:proof_stability_condition}
Under the stated assumptions, the set $N \cap C$ is closed and convex, so the projection $\proj{N \cap C}$ is well defined.
Also, any $x \in H$ admits the orthogonal decomposition
\[
x = x_{N} + x_{N^\perp}, \qquad x_{N} \in N,\; x_{N^\perp} \in N^\perp.
\]
By the variational characterization of projections onto closed convex sets
\cite[Theorem~1.24]{carlier2021classical}, we have
\[
\langle x - \proj{N \cap C}(x),\, y - \proj{N \cap C}(x) \rangle \le 0,
\qquad \forall\, y \in N \cap C.
\]
Expanding the inner product yields
\begin{align*}
\langle x - \proj{N \cap C}(x),\, y - \proj{N \cap C}(x) \rangle
&= \langle x_{N} - \proj{N \cap C}(x),\, y - \proj{N \cap C}(x) \rangle \\ 
&+ \langle x_{N^\perp},\, y - \proj{N \cap C}(x) \rangle.
\end{align*}
The second term vanishes because $y - \proj{N \cap C}(x) \in N$.
To prove \eqref{eq:composition_proj}, it suffices to show that
\begin{equation}
\label{eq:co_composition}
\langle x_{N} - \proj{C} \circ \proj{N}(x),\, y - \proj{C} \circ \proj{N}(x) \rangle \le 0,
\qquad \forall\, y \in N \cap C,
\end{equation}
and that $\proj{C} \circ \proj{N}(x) \in N \cap C$. Since $\proj{N}(x) = x_{N}$ and $y \in C$, we obtain
\[
\langle x_{N} - \proj{C}(x_{N}),\, y - \proj{C}(x_{N}) \rangle \le 0,
\]
which is precisely \eqref{eq:co_composition}.
Finally, the stability condition \eqref{eq:stability_condition} ensures that
$\proj{C} \circ \proj{N}(x) \in N$, and by definition it also lies in $C$.
Thus $\proj{C} \circ \proj{N}(x) \in N \cap C$, completing the proof.

\bibliographystyle{plain}   
\bibliography{samplebib}

\end{document}